\documentclass[11pt,a4paper]{article}
\textheight 24cm \textwidth 16cm \topmargin 0in \oddsidemargin
0in
\evensidemargin 0in \headheight 0in \leftskip  0cm \headsep 0in

\parindent=0 cm
\date{}
\makeatletter
\@addtoreset{equation}{section}
\makeatother
\usepackage[colorlinks,
            linkcolor=blue,
            anchorcolor=blue,
            citecolor=blue]{hyperref}
\usepackage{hyperref}
\hypersetup{hypertex=true,
            colorlinks=true,
            linkcolor=blue,
            anchorcolor=blue,
            citecolor=blue}
\usepackage{hyperref,amsmath,amssymb,amscd}
\usepackage{amssymb,amsmath,enumerate}
\usepackage{indentfirst}
\setlength{\parindent}{1em}
\usepackage{url}
\usepackage[capitalise]{cleveref}
\usepackage{amsthm}
\usepackage{cite}
\usepackage{appendix}
\pagenumbering{arabic}
\newtheorem{thm}{Theorem}[section]
\newtheorem{cor}[thm]{Corollary}
\newtheorem{lem}[thm]{Lemma}
\newtheorem{prop}[thm]{Proposition}
\theoremstyle{definition}
\newtheorem{defn}[thm]{Definition}

\newtheorem{rem}[thm]{\bf Remark}
\newtheorem{\appendixname}{\Appendix~Alph}
\allowdisplaybreaks[4]

\begin{document}

\title{\bf
Existence and asymptotic behavior of least energy sign-changing
solutions for Schr\"{o}dinger-Poisson systems with doubly
critical exponents\footnote{Supported by National Natural Science
Foundation of China (No. 11971393).}}
\author{{Xiao-Ping Chen,\ Chun-Lei Tang\footnote{Corresponding
author.
E-mail address: tangcl@swu.edu.cn (C.-L. Tang).}}\\
{\small \emph{ School of Mathematics and Statistics, Southwest
University,  Chongqing {\rm400715},}}\\
{\small \emph{People's Republic of China}}}
\maketitle
\baselineskip 17pt

\noindent {\bf Abstract}:\ In this paper, we are concerned with the
following Schr\"{o}dinger-Poisson system with critical
nonlinearity and critical nonlocal term due to the
Hardy-Littlewood-Sobolev inequality
\begin{equation*}
\begin{cases}
 -\Delta u+u+\lambda\phi |u|^3u =|u|^4u+ |u|^{q-2}u,\ \ &\ x \in
 \mathbb{R}^{3},\\[2mm]
  -\Delta \phi=|u|^5, \ \ &\ x \in \mathbb{R}^{3},
\end{cases}
\end{equation*}
where $\lambda\in \mathbb{R}$ is a parameter and $q\in(2,6)$. If
$\lambda\ge (\frac{q+2}{8})^2$ and $q\in(2,6)$, the above system
has no nontrivial solution. If $\lambda\in (\lambda^*,0)$ for some $\lambda^*<0$, we obtain a least energy radial sign-changing
solution $u_\lambda$ to the above system. Furthermore, we consider $\lambda$ as a parameter and analyze the asymptotic behavior of $u_\lambda$ as
$\lambda\to 0^-$.


\vspace{0.125em}
\vspace*{0.125em}
\noindent \textbf{Keywords}:\ Schr\"{o}dinger-Poisson system;
Doubly critical exponents; Least energy; Sign-changing
solutions

\noindent \textbf{Mathematics Subject Classification}:\ 35J50;
35J47; 47J30

\par

\section{Introduction and main results}

\noindent In this article, we are interested in the existence,
nonexistence and asymptotic behavior of least energy
sign-changing solutions for the following Schr\"{o}dinger-Poisson
system
\begin{equation}\tag{$\mathcal{{SP}}$}\label{1.6}
\begin{cases}
 -\Delta u+u+\lambda\phi |u|^3u =|u|^4u+ |u|^{q-2}u,\ \ &\ x \in
 \mathbb{R}^{3},\\[2mm]
  -\Delta \phi=|u|^5, \ \ &\ x \in \mathbb{R}^{3},
\end{cases}
\end{equation}
where $\lambda \in \mathbb{R}$ is a parameter and $q\in(2,6)$.

When it comes to the following more general cases of
Schr\"{o}dinger-Poisson system
\begin{equation}\label{11}
\begin{cases}
 -\Delta u+V(x)u+\lambda \phi(x) |u|^{s-2}u = f(u),\ \ &\ x \in
 \mathbb{R}^{3},\\[2mm]
 -\Delta \phi=|u|^{s}, \ \ &\ x \in \mathbb{R}^{3},\\
\end{cases}
\end{equation}
which has been studied extensively, where $\lambda\in \mathbb{R}$ is a parameter and $s\in[\frac{5}{3},5]$, the potential function
$V(x)$ is continuous and the nonlinearity $f$ satisfies some suitable assumptions. Notice that the numbers $\frac{5}{3}$ and $5$ are called the lower and the upper critical exponents due to the Hardy-Littlewood-Sobolev
inequality (see Proposition \ref{prohlsi} below), respectively.
In the past decades, much attention has been attracted to system
\eqref{11}  with $s\in[\frac{5}{3},5)$ (i.e., the subcritical
nonlocal case), see for example, \cite{LFY13,AA13}.

Especially, for the case of $s\equiv 2$, system \eqref{11} turns
to the following system
\begin{equation}\label{22}
\begin{cases}
 -\Delta u+V(x)u+\lambda \phi(x) u = f(u),\ \ &\ x \in
 \mathbb{R}^{3},\\[2mm]
 -\Delta \phi=|u|^{2}, \ \ &\ x \in \mathbb{R}^{3}.\\
\end{cases}
\end{equation}
System \eqref{22} was firstly proposed by Benci and Fortunato
\cite{BV98} in 1998 to describe the interaction between a charged
particle and the electrostatic field in quantum mechanics. For
more detailed physical background of system \eqref{22}, we refer
the readers to \cite{AA08,BV02} and the reference therein. Due to
its physical significance, which has been investigated by many
researchers for the existence and nonexistence of positive
solutions, multiple solutions, least energy solutions, radial
solutions, sign-changing solutions and so on, see for example,
\cite{CG10,CXP21,CXP22,RD06,SW15,WDB19,WZP15,LFY20,ZJ15,ZZH21,ZXJ18}.
In \cite{CG10,RD06}, the existence of positive solutions was
investigated for the subcritical nonlinearity $f$. The existence
of least energy solutions for system \eqref{22} has been
considered in \cite{SJT12,LFY20}. Moreover, the existence of
least energy sign-changing solutions for system \eqref{22} with
subcritical nonlinearity was investigated in \cite{SW15,WZP15},
and the existence of least energy sign-changing solutions for
system \eqref{22} with critical nonlinearity has been considered
in \cite{CXP21,CXP22,WDB19,ZJ15,ZZH21,ZXJ18}.

However, all the papers mentioned above investigated the
subcritical nonlocal term. To the best of our knowledge,
Schr\"{o}dinger-Poisson system with critical nonlocal term (i.e.,
$s\equiv 5$) has only been studied in
\cite{HXM21,LFY14,LFY17,LH16,AA12}. Azzollini and d'Avenia
\cite{AA12} firstly studied the Schr\"{o}dinger-Poisson system
with critical nonlocal term as follows:
\begin{equation*}
\begin{cases}
 -\Delta u=\lambda u+q \phi |u|^{3}u,\ \ &\ x \in B_R,\\[1mm]
 -\Delta \phi=q|u|^{5}, \ \ &\ x \in B_R,\\[1mm]
 u=\phi=0, \ \ &\ \mathrm{on}\ \partial B_R,\\
\end{cases}
\end{equation*}
where $B_R(0)\subset \mathbb{R}^3$ is a ball centered in $0$ with
radius $R$. The authors considered the existence and nonexistence of
nontrivial solutions involving the range of $\lambda$. After
this, many researchers are devoted to investigating the
Schr\"{o}dinger-Poisson system with critical nonlocal term. As we know, compared with the
Schr\"{o}dinger-Poisson system with subcritical nonlocal term, there are few results to investigate
the Schr\"{o}dinger-Poisson system with critical nonlocal term.

In \cite{LH16}, by concentration-compactness principle, Liu
proved the existence of positive solutions for the following
system
\begin{equation}\label{33}
\begin{cases}
 -\Delta u+V(x)u-K(x) \phi(x) |u|^{3}u = f(u),\ \ &\ x \in
 \mathbb{R}^{3},\\[2mm]
 -\Delta \phi=K(x)|u|^{5}, \ \ &\ x \in \mathbb{R}^{3},\\
\end{cases}
\end{equation}
where $V,\,K$ and $f$ are asymptotically periodic functions with
respect to $x$. Li, Li and Shi \cite{LFY14} investigated the
existence of positive radial symmetric solutions for system
\eqref{33} with $V(x)\equiv b$ (a positive constant) and
$K(x)\equiv 1$, by using variational methods without usual
compactness conditions. Later, in \cite{LFY17}, they also paid
attention to the existence, nonexistence and multiplicity of
positive radial solutions for the following system
\begin{equation*}
\begin{cases}
 -\Delta u+u+\lambda \phi|u|^{3}u = \mu |u|^{p-1}u,\ \ &\ x \in
 \mathbb{R}^{3},\\[2mm]
 -\Delta \phi=|u|^{5}, \ \ &\ x \in \mathbb{R}^{3},\\
\end{cases}
\end{equation*}
where $\mu\ge 0$ and $\lambda\in \mathbb{R}$ are parameters.
Recently, in \cite{HXM21}, He considered a fractional
Schr\"{o}dinger-Poisson system with critical nonlocal term, and
the system studied there was as follows:
\begin{equation}\label{44}
\begin{cases}
 (-\Delta)^s u+V(x)u- \phi(x) |u|^{2_s^*-3}u =
 |u|^{2_s^*-2}u+f(u),\ \ &\ x \in \mathbb{R}^{3},\\[2mm]
 (-\Delta)^s \phi=|u|^{2_s^*-1}, \ \ &\ x \in \mathbb{R}^{3},\\
\end{cases}
\end{equation}
where $s\in(0,1)$ and $V(x)$ is a coercive potential, the author
proved that system \eqref{44} possesses at least one positive
solutions by employing mountain pass theorem.

Motivated by the above works, especially the results obtained in
\cite{LFY14,LFY17,LH16,HXM21}, in this paper, we investigate the
existence of least energy sign-changing solutions for system
\eqref{1.6} with $\lambda\in \mathbb{R}$, which involves critical
nonlinearity and critical nonlocal term. As far as we know, in
the case of $\lambda>0$, it seems that there is no result about
the existence of nontrivial solutions for system \eqref{1.6}.
Here we consider it. In what follows, we state the nonexistence
result of nontrivial solutions for system \eqref{1.6}.


\begin{thm}\label{thm-3}
For $\lambda\ge (\frac{q+2}{8})^2$ and $q\in(2,6)$, then system
\eqref{1.6} has no nontrivial solution.
\end{thm}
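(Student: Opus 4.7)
The approach is the classical Poho\v{z}aev--Nehari identity argument adapted to the doubly critical system. By standard elliptic bootstrap (using the $L^6$-smoothing of the Riesz potential $\phi_u=(4\pi|x|)^{-1}*|u|^5$), any weak solution $u\in H^1(\mathbb{R}^3)$ of \eqref{1.6} belongs to $C^2$ with $u,\nabla u$ decaying at infinity, so all integrations by parts below are justified. Pairing the first equation against $u$ and integrating yields the Nehari-type identity
\begin{equation*}
\|\nabla u\|_2^2+\|u\|_2^2+\lambda\int_{\mathbb{R}^3}\phi_u|u|^5\,dx=\|u\|_6^6+\|u\|_q^q.\tag{T}
\end{equation*}

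The Poho\v{z}aev identity is obtained by pairing with $x\cdot\nabla u$ on $B_R$, integrating by parts, and passing to $R\to\infty$; the critical nonlocal term is handled via the Poho\v{z}aev manipulation applied to $\phi_u$ itself using $-\Delta\phi_u=|u|^5$, which simplifies to $\int\phi_u|u|^3u\,(x\cdot\nabla u)\,dx=-\tfrac{1}{2}\int\phi_u|u|^5\,dx$. Using the correct value $\int(-\Delta u)(x\cdot\nabla u)\,dx=\tfrac{2-N}{2}\|\nabla u\|_2^2=-\tfrac{1}{2}\|\nabla u\|_2^2$ in $\mathbb{R}^3$, this yields
\begin{equation*}
\|\nabla u\|_2^2+3\|u\|_2^2+\lambda\int\phi_u|u|^5\,dx=\|u\|_6^6+\tfrac{6}{q}\|u\|_q^q.\tag{P}
\end{equation*}
Subtracting (T) from (P) produces the relation $\|u\|_2^2=\tfrac{6-q}{2q}\|u\|_q^q$, and substituting back into (T) yields the key identity
\begin{equation*}
\|\nabla u\|_2^2+\lambda\int\phi_u|u|^5\,dx=\|u\|_6^6+\tfrac{3(q-2)}{2q}\|u\|_q^q.\tag{I}
\end{equation*}

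For the contradiction, assume a nontrivial $u$ exists with $\lambda\ge(\tfrac{q+2}{8})^2$, so that all integrals in (I) are strictly positive. The key auxiliary inequality is a consequence of Kato's inequality $|\nabla|u||\le|\nabla u|$ and Cauchy--Schwarz:
\begin{equation*}
\|u\|_6^6=\int\nabla|u|\cdot\nabla\phi_u\,dx\le\|\nabla u\|_2\Bigl(\int\phi_u|u|^5\Bigr)^{1/2}.
\end{equation*}
Together with AM--GM on the left side of (I), namely $\|\nabla u\|_2^2+\lambda\int\phi_u|u|^5\ge2\sqrt{\lambda}\,\|\nabla u\|_2(\int\phi_u|u|^5)^{1/2}\ge2\sqrt{\lambda}\,\|u\|_6^6$, identity (I) yields $(2\sqrt\lambda-1)\|u\|_6^6\le\tfrac{3(q-2)}{2q}\|u\|_q^q$. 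Finally, the H\"older interpolation $\|u\|_q^q\le\|u\|_2^{(6-q)/2}\|u\|_6^{3(q-2)/2}$ combined with the constraint $\|u\|_2^2=\tfrac{6-q}{2q}\|u\|_q^q$ reduces to $\|u\|_q^q\le C_q\|u\|_6^6$ for an explicit $C_q$; substituting and rearranging forces $\sqrt\lambda<\tfrac{q+2}{8}$, contradicting the hypothesis. The main obstacle is the last algebraic step: tracking constants through the chain of Kato/Cauchy--Schwarz, AM--GM, and H\"older interpolation (with the constraint (II)) so as to recover precisely the sharp threshold $(q+2)/8$ rather than a nearby constant.
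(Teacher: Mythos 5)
Your approach shares the paper's key idea --- the inequality $\|u\|_6^6=\int\nabla|u|\cdot\nabla\phi_u\,dx\le\|\nabla u\|_2\bigl(\int\phi_u|u|^5\bigr)^{1/2}$ combined with AM--GM to exploit the competition between the nonlocal term and the critical nonlinearity --- but the surrounding scaffolding is genuinely different and, as written, incomplete. The paper does \emph{not} use the Poho\v{z}aev identity at all: it plugs the Young inequality $\int|u|^q\le\tfrac{6-q}{4}\int u^2+\tfrac{q-2}{4}\int|u|^6$ and the AM--GM bound directly into the Nehari relation (T), and the constants match exactly, leaving $0\ge\tfrac{q-2}{4}\int u^2+\bigl(2\sqrt\lambda-\tfrac{q+2}{4}\bigr)\int|u|^6$, from which $u\equiv 0$ follows immediately when $\sqrt\lambda\ge\tfrac{q+2}{8}$. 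That argument also avoids the regularity/decay issues you invoke to justify the Poho\v{z}aev identity, which you only sketch.

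The gap in your proposal is precisely the step you flag as ``the main obstacle.'' Having replaced $\|u\|_2^2$ by $\tfrac{6-q}{2q}\|u\|_q^q$ via Poho\v{z}aev, you reach $(2\sqrt\lambda-1)\|u\|_6^6\le\tfrac{3(q-2)}{2q}\|u\|_q^q$ and then must bound $\|u\|_q^q$ by $\|u\|_6^6$. H\"older interpolation together with the constraint $\|u\|_2^2=\tfrac{6-q}{2q}\|u\|_q^q$ yields $\|u\|_q^q\le C_q\|u\|_6^6$ with $C_q=\bigl(\tfrac{6-q}{2q}\bigr)^{(6-q)/(q-2)}$, and the argument only closes if $C_q\le\tfrac{q}{6}$. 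You never verify this. It \emph{is} true --- substituting $t=\tfrac{q-2}{4}$ reduces it to showing $f(t):=\log(2t+1)-(1-t)\log(1-t)-t\log 3>0$ on $(0,1)$, and since $f(0)=f(1)=0$ with $f''<0$, concavity gives $f>0$ --- but this is a nontrivial extra lemma your proposal omits. Moreover, because H\"older is strict for nontrivial $u$ and $C_q<\tfrac{q}{6}$ strictly, your chain would actually yield a threshold strictly below $\tfrac{q+2}{8}$, i.e.\ a slightly stronger nonexistence statement; the worry that you cannot ``recover precisely the sharp threshold'' is misplaced, but the algebra that would reveal this (and make the proof rigorous) is exactly what is missing. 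The paper's Young-inequality route avoids the entire $C_q$ issue and is the cleaner path.
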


\begin{rem}
As has been pointed out above, Theorem \ref{thm-3} is the first
attempt to investigate the nonexistence of nontrivial solutions
for system \eqref{1.6} with critical nonlinearity and critical
nonlocal term. We will give the proof of Theorem \ref{thm-3} with
the help of Young's inequality.
\end{rem}

\begin{rem}
The existence or nonexistence results for nontrivial solutions of
system \eqref{1.6} are completely different from the system with
subcritical nonlocal term. In \cite{ZZH21}, authors obtained a
least energy  sign-changing solution for the following system
\begin{equation*}
\begin{cases}
 -\Delta u+u+\phi u =|u|^4u+ |u|^{q-2}u,\ \ &\ x \in
 \mathbb{R}^{3},\\[2mm]
  -\Delta \phi=|u|^2, \ \ &\ x \in \mathbb{R}^{3},
\end{cases}
\end{equation*}
where $q\in(5,6)$. However, Theorem \ref{thm-3} shows that system
\eqref{1.6} has no nontrivial solution for $\lambda\ge
(\frac{q+2}{8})^2$ and $q\in(2,6)$. Thereby, this indicates the
difference between the Schr\"{o}dinger-Poisson system with a
critical nonlocal term and that with a subcritical nonlocal term.
\end{rem}

Now, we turn to investigate the existence of sign-changing
solutions for system \eqref{1.6}. Inspired by the papers
mentioned above, there is no result to study the existence of
sign-changing solutions for system \eqref{1.6} which involves
critical nonlinearity and a critical nonlocal term, a natural
question is whether system \eqref{1.6} possesses a least energy
sign-changing solution or not? In the following theorem, we will
give an affirmative answer to this question. The
Schr\"{o}dinger-Poisson system with critical nonlinearity and
critical nonlocal term is much more difficult to obtain the
existence of sign-changing solutions. The first difficulty is the
lack of compactness. The second difficulty is the competition
between the critical nonlocal term and the critical nonlinearity.
Now, we are in a position to state the existence result of least
energy sign-changing solutions for system \eqref{1.6}. In the
following results, we will work in the space
$H_r^1(\mathbb{R}^3)$ which contains all radial symmetric
functions of standard Hilbert space $H^1(\mathbb{R}^3)$.

In the following, we consider the case of $\lambda<0$, which is a
quite crucial result of this paper.

\begin{thm}\label{thm-2}
For $q\in(5,6)$, there exists $\lambda^*<0$ such that for all
$\lambda\in(\lambda^*,0)$, then system \eqref{1.6} possesses at least one least energy radial sign-changing solution $u_\lambda$.
\end{thm}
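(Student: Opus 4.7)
The natural approach is constrained minimization on the sign-changing Nehari manifold inside the radial Sobolev space $H^{1}_{r}(\mathbb{R}^{3})$. With the energy functional
\[
I(u)=\tfrac{1}{2}\|u\|^{2}+\tfrac{\lambda}{10}\int_{\mathbb{R}^{3}}\phi_{u}|u|^{5}\,dx-\tfrac{1}{6}\int_{\mathbb{R}^{3}}|u|^{6}\,dx-\tfrac{1}{q}\int_{\mathbb{R}^{3}}|u|^{q}\,dx,
\]
where $\phi_{u}$ is the convolution solution of $-\Delta\phi=|u|^{5}$, I would introduce
\[
\mathcal{M}_{\lambda}=\{u\in H^{1}_{r}(\mathbb{R}^{3}):u^{\pm}\not\equiv 0,\ \langle I'(u),u^{+}\rangle=\langle I'(u),u^{-}\rangle=0\},\qquad c_{\lambda}=\inf_{\mathcal{M}_{\lambda}}I.
\]
The scaling identity $\phi_{su^{+}+tu^{-}}=s^{5}\phi_{u^{+}}+t^{5}\phi_{u^{-}}$ for $s,t>0$ makes the two Nehari constraints explicit polynomials in $(s,t)$, coupled through a cross term of order $s^{5}t^{5}\int\phi_{u^{+}}|u^{-}|^{5}$.

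First, I would prove that for every $u\in H^{1}_{r}$ with $u^{\pm}\not\equiv 0$ there is a unique pair $(s_{u},t_{u})\in(0,\infty)^{2}$ with $s_{u}u^{+}+t_{u}u^{-}\in\mathcal{M}_{\lambda}$, and that this pair is the strict global maximum of $(s,t)\mapsto I(su^{+}+tu^{-})$. When $\lambda=0$ the two constraint equations decouple into one-variable Nehari equations, each uniquely solvable because $q\in(5,6)$ makes the relevant profile have a unique positive critical point. For $\lambda<0$ with $|\lambda|$ small, the $\lambda s^{10}$-term and the cross term are lower-order perturbations, and an implicit function or monotonicity argument preserves uniqueness and the maximum property; this step is what forces $\lambda$ to lie in a small one-sided neighborhood of $0$, producing the threshold $\lambda^{*}<0$.

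Next, the identity $I(u)=I(u)-\tfrac{1}{q}\langle I'(u),u\rangle$ on $\mathcal{M}_{\lambda}$ is a sum of three nonnegative terms (since $\lambda<0$, $q<10$, and $q<6$), yielding $I(u)\geq(\tfrac{1}{2}-\tfrac{1}{q})\|u\|^{2}$; combined with lower bounds $\|u^{\pm}\|\geq\delta>0$ obtained from $\langle I'(u),u^{\pm}\rangle=0$ via the Sobolev and Hardy-Littlewood-Sobolev inequalities, this gives $c_{\lambda}>0$ and boundedness of minimizing sequences. The core compactness input is the strict inequality $c_{\lambda}<c^{*}$ for a Brezis-Nirenberg type threshold $c^{*}$ of the form $\tfrac{2}{3}S^{3/2}+O(|\lambda|)$, one bubble per sign. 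I would verify it by building a radial test function whose positive part is a truncated Aubin-Talenti instanton $U_{\varepsilon}$ concentrated near the origin and whose negative part is a fixed smooth radial profile supported in a distant annulus, so that the cross term $\int\phi_{u_{\varepsilon}^{+}}|u_{\varepsilon}^{-}|^{5}$ is uniformly small. Evaluating $I$ at the projection $(s_{u_{\varepsilon}},t_{u_{\varepsilon}})$ and using $q>5$ to extract a strictly negative $O(\varepsilon^{(6-q)/2})$ correction that beats the positive $O(\varepsilon)$ truncation losses, while keeping the $O(|\lambda|)$ nonlocal contributions under control, yields the desired strict inequality for $\lambda\in(\lambda^{*},0)$.

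Finally, for a minimizing sequence $\{u_{n}\}\subset\mathcal{M}_{\lambda}$ I extract a weak limit $u_{\lambda}\in H^{1}_{r}$; the compact embedding $H^{1}_{r}\hookrightarrow L^{q}$ for $q\in(2,6)$ handles the subcritical term, radial compactness together with Hardy-Littlewood-Sobolev gives continuity of the nonlocal form along weak limits in the absence of bubbling, and a Brezis-Lieb decomposition combined with the strict bound $c_{\lambda}<c^{*}$ rules out critical bubbling of either $u_{\lambda}^{+}$ or $u_{\lambda}^{-}$. Hence $u_{\lambda}\in\mathcal{M}_{\lambda}$ and $I(u_{\lambda})=c_{\lambda}$; a quantitative deformation lemma combined with the uniqueness of the Nehari projection (any descent direction would contradict minimality via the projection map) promotes $u_{\lambda}$ to a critical point of $I$, and the principle of symmetric criticality then provides the sought radial sign-changing weak solution of \eqref{1.6}. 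The main obstacle, as expected for doubly critical problems, is the strict upper bound $c_{\lambda}<c^{*}$: the $t^{10}$-scaling of the critical nonlocal term strictly dominates the $t^{6}$-scaling of the critical nonlinearity, so the interaction of the signed bubble, the cross term, and the subcritical $|u|^{q}$ perturbation must be balanced delicately, and this is what ultimately pins down $\lambda^{*}$.
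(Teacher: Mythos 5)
Your overall skeleton (sign-changing Nehari manifold, projection pair $(s_u,t_u)$, energy estimate below a concentration threshold, Br\'{e}zis--Lieb accounting) is the right shape, but there are two consequential gaps that the paper handles differently.

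\emph{Where $\lambda^*$ actually enters.} You claim the restriction $\lambda\in(\lambda^*,0)$ is forced by the uniqueness of the Nehari projection, treating the $\lambda s^{10}$ and cross terms as perturbations. This is not where the constraint comes from. In the paper the projection lemma (Lemma~\ref{lam-16}) is proved for \emph{all} $\lambda<0$: the terms $\frac{\lambda}{10}s^{10}$, $\frac{\lambda}{10}t^{10}$ and $\frac{\lambda}{5}s^5t^5$ are nonpositive when $\lambda<0$, so they only strengthen the superlinear-to-sublinear competition and no smallness of $|\lambda|$ is needed. The genuine restriction on $\lambda$ arises later, in the $(\mathrm{PS})_{m_\lambda}$ analysis (Lemma~\ref{lam-116}): when ruling out $u\equiv 0$ for the weak limit, one derives the lower bound $m_\lambda\ge\frac{2}{3}t_*(\lambda)$ with $t_*(\lambda)\to S^{3/2}$ as $\lambda\to 0^-$, and this contradicts the upper bound $m_\lambda<c_\lambda+c^*_\lambda<\frac{2}{3}S^{3/2}$ only for $|\lambda|$ small. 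That is what produces $\lambda^*$.

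\emph{The energy threshold and test function.} You aim for $m_\lambda<\frac{2}{3}S^{3/2}+O(|\lambda|)$ (``one bubble per sign''), using a truncated instanton for the positive part and a distant fixed radial profile for the negative part. That threshold is too weak: compactness can already fail at the level $c_\lambda+c^*_\lambda$, where the weak limit $u$ is a (nontrivial, possibly one-signed) solution with $\mathcal{I}_\lambda(u)\ge c_\lambda$ and a single bubble carries at least $c^*_\lambda\approx\frac{1}{3}S^{3/2}$. Since $c_\lambda<\frac{1}{3}S^{3/2}$ (Remark~\ref{rem-16}), one has $c_\lambda+c^*_\lambda<\frac{2}{3}S^{3/2}$; so proving only $m_\lambda<\frac{2}{3}S^{3/2}$ does not close the compactness argument. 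The paper therefore proves the sharper bound $m_\lambda<c_\lambda+c^*_\lambda$ by choosing the test function $s_\varepsilon v_0-t_\varepsilon u_\varepsilon$, where $v_0$ is the actual positive ground state of the system (whose boundedness and regularity from Remark~\ref{rem-6} are needed in the cross-term estimates $\Pi_3$--$\Pi_6$) and $u_\varepsilon$ is the truncated bubble concentrated at the same point. The negative correction $-C\varepsilon^{(6-q)/4}$ coming from $\Pi_3$ beats the $O(\varepsilon^{1/4})$ interaction errors precisely because $q\in(5,6)$, which is the engine of the strict inequality. If you take disjoint supports and a generic distant radial profile, its Nehari-projected energy is $\ge c_\lambda$ (no strictness) and you lose the mechanism that pushes the total strictly below $c_\lambda+c^*_\lambda$.

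A further, smaller difference of route: instead of minimizing on $\mathcal{M}_\lambda$ and then invoking a deformation lemma on the minimizer, the paper constructs a bona fide sign-changing $(\mathrm{PS})_{m_\lambda}$ sequence up front by the Cerami--Solimini--Struwe minimax over the set $\Sigma$ of maps from $[0,1]^2$, together with Miranda's theorem to intersect $\mathcal{M}_\lambda$ and Hofer's equivariant deformation. This avoids having to argue that the infimum on $\mathcal{M}_\lambda$ is attained before producing a critical point, which is genuinely delicate here because the constraints involve the nonlocal critical term. Your deformation-at-the-end approach could be made to work, but the two energy issues above are the real gaps to repair.
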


\begin{rem}
Theorem \ref{thm-2} is the first existence result about least
energy sign-changing solutions for system \eqref{1.6} which
involves critical nonlinearity and critical nonlocal term. Here we would like to emphasize that the existence of
sign-changing solutions for Schr\"{o}dinger-Poisson system with
subcritical nonlocal term has been considered in many papers for
the case $\lambda>0$ not the case $\lambda<0$. It is worthwhile
noticing that the existence of sign-changing solutions for system
\eqref{1.6} with critical nonlocal term has not been investigated
in either $\lambda>0$ or $\lambda<0$ case. Here we consider the
both cases.
\end{rem}

\begin{rem}
Compared with \cite{CXP21,CXP22,WDB19,ZZH21,ZJ15,ZXJ18}, where
 authors investigated a class of Schr\"{o}dinger-Poisson
system with critical nonlinearity and subcritical nonlocal term,
we investigate the existence of least energy sign-changing
solutions for Schr\"{o}dinger-Poisson system with doubly critical
exponents. Instead of the single critical exponent, the presence
of doubly critical exponents make it more difficult to recover
the compactness condition and estimate energy.
\end{rem}

Now, we give the main ideas for the proof of Theorem \ref{thm-2}.
Since our main purpose is to investigate the existence of
sign-changing solutions for system \eqref{1.6} involving critical
nonlinearity, the first thing we need to do is to construct a
minimizing Palais-Smale sequence ($(\mathrm{PS})$ sequence for
short) on the sign-changing Nehari manifold, we borrow some ideas
from \cite{CG36} to obtain it, see Lemma \ref{lam-76} below. The
second thing is to prove the $(\mathrm{PS})_{m_\lambda}$
condition with the help of the upper bound of the least
energy $m_\lambda$ on the sign-changing Nehari manifold
$\mathcal{M}_\lambda$. It is worth mentioning that to estimate
the least energy $m_\lambda$ on the sign-changing Nehari manifold
$\mathcal{M}_\lambda$ is a key point in the proof of Theorem
\ref{thm-2}, we established some more subtle estimates to obtain
it.


Note that when $\lambda\equiv0$, system
\eqref{1.6} reduces to the following equation
\begin{equation}\label{1.7}
  -\Delta u+u =|u|^4u+ |u|^{q-2}u,\ \ \ x \in \mathbb{R}^{3}.
\end{equation}
Then according to Theorem \ref{thm-2}, we obtain the following result.

\begin{cor}\label{thm-5}
For $q\in(5,6)$, then problem \eqref{1.7} possesses at least one
least energy radial sign-changing solution with exactly two nodal
domains, and its energy is larger than twice that of least energy
radial solutions.
\end{cor}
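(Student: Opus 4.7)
When $\lambda=0$ the nonlocal term in \eqref{1.6} disappears, so \eqref{1.7} is a local semilinear problem and the corollary will follow by specialising the variational framework behind Theorem \ref{thm-2} to this simpler setting. Three items need to be verified: (i) existence of a least energy radial sign-changing solution $u_0$ of \eqref{1.7}; (ii) $u_0$ has exactly two nodal domains; (iii) $I_0(u_0)>2c_0$, where $I_0$ is the energy associated with \eqref{1.7} on $H^1_r(\mathbb{R}^3)$, $\mathcal{N}_0=\{u\in H^1_r\setminus\{0\}:\langle I_0'(u),u\rangle=0\}$, and $c_0=\inf_{\mathcal{N}_0}I_0$ is the least energy of radial solutions.

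Define
\[
I_0(u)=\tfrac12\int_{\mathbb{R}^3}(|\nabla u|^2+u^2)\,dx-\tfrac16\int_{\mathbb{R}^3}|u|^6\,dx-\tfrac1q\int_{\mathbb{R}^3}|u|^q\,dx
\]
and the sign-changing Nehari set $\mathcal{M}_0=\{u\in H^1_r:\,u^{\pm}\not\equiv0,\,\langle I_0'(u),u^+\rangle=\langle I_0'(u),u^-\rangle=0\}$, with $m_0=\inf_{\mathcal{M}_0}I_0$. The absence of any cross-coupling between $u^+$ and $u^-$ is the key simplification: membership in $\mathcal{M}_0$ decouples into $u^{\pm}\in\mathcal{N}_0$, and $I_0(u)=I_0(u^+)+I_0(u^-)$. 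For step (i) I would mimic, in the $\lambda=0$ case, the Nehari-type construction of Theorem \ref{thm-2}: a minimizing $(\mathrm{PS})_{m_0}$ sequence on $\mathcal{M}_0$ is produced along the lines of Lemma \ref{lam-76}, and compactness is recovered by a sharp upper bound on $m_0$ below the first compactness threshold, obtained for $q\in(5,6)$ by gluing a radial ground state of \eqref{1.7} to a concentrating Aubin--Talenti bubble and using the fact that the subcritical correction from $|u|^q$ dominates the cross-term errors. The argument is in fact cleaner than for Theorem \ref{thm-2} because there is no competing critical nonlocal term to estimate.

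For step (ii), if $u_0$ had three or more nodal domains I would write $u_0=\sum_{i=1}^k v_i$ with $v_i$ supported on mutually disjoint nodal components; testing the equation against each $v_i$ and using disjoint supports forces $v_i\in\mathcal{N}_0$, hence $I_0(v_i)>0$, and assembling $w:=v_1+v_2$ with opposite signs yields $w\in\mathcal{M}_0$ with
\[
I_0(w)=m_0-\sum_{i\geq 3}I_0(v_i)<m_0,
\]
contradicting the definition of $m_0$. For step (iii), $u_0^{\pm}\in\mathcal{N}_0$ immediately gives $I_0(u_0)\geq 2c_0$; equality would force $u_0^+$ to be a radial ground state of \eqref{1.7}, but such ground states are strictly positive on $\mathbb{R}^3$ by the strong maximum principle, contradicting $u_0^+\equiv 0$ on the non-empty open set $\{u_0<0\}$. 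I expect step (i) to be the main obstacle: even without the nonlocal competition, the sharp energy estimate below the compactness threshold requires careful cross-term analysis of the gluing construction, and the restriction $q>5$ is exactly what ensures the subcritical correction dominates the error coming from the concentrating bubble.
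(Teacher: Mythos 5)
Your proposal is correct and follows essentially the same route as the paper: existence via a Nehari-type minimization with a sharp energy threshold, the exactly-two-nodal-domains claim by the standard decomposition/recombination argument (you bypass the paper's Lemmas \ref{lam-66}--\ref{lam-67} by directly noting that disjoint supports and the absence of cross-coupling at $\lambda=0$ put $w=v_1+v_2$ in $\mathcal{M}_0$, which is a cleaner way to reach the same conclusion), and the doubling inequality from $u_0^{\pm}\in\mathcal{N}_0$. The one genuine addition is your maximum-principle step showing the inequality is strict, $m_0>2c_0$; the paper only records $m_0\ge 2c_0$ (and indeed Remark \ref{rem-16} later uses only the non-strict bound), so your argument actually fills a small gap between the wording of the corollary ("larger than") and what the paper's proof delivers.
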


\begin{rem}
In view of Theorem \ref{thm-3}, Theorem \ref{thm-2} and Corollary \ref{thm-5}, it is still an
open question to study the existence of nontrivial solutions for
the case $\lambda\in(0,(\frac{q+2}{8})^2)$, if nontrivial
solutions exist, whether least energy sign-changing solutions
exist. Hence, it is worth exploring a new technique to study the
existence of nontrivial solutions of system \eqref{1.6} for the
case $\lambda\in(0,(\frac{q+2}{8})^2)$.

\end{rem}

We further study the asymptotic behavior of the least energy
radial sign-changing solution $u_\lambda$ obtained in Theorem
\ref{thm-2} as $\lambda\rightarrow 0^-$, which indicates the
relationship between $\lambda<0$ and $\lambda=0$ in system
\eqref{1.6}.

\begin{thm}\label{thm-4}
Let $u_{\lambda_n}$ be a least energy radial sign-changing
solution of system \eqref{1.6} with $\lambda=\lambda_n$ obtained
in Theorem \ref{thm-2}, then for any sequence $\{\lambda_n\}$
with ${\lambda_n\rightarrow {0^-}}$ as $n\rightarrow\infty$,
there exists a subsequence, still denoted by $\{\lambda_n\}$,
such that ${u_{\lambda_n}}$ converges to $u_0$ weakly in
$H_r^1(\mathbb{R}^3)$ as $n\to \infty$, where $u_0$ is a least
energy radial sign-changing solution of problem \eqref{1.7} which
has precisely two nodal domains.
\end{thm}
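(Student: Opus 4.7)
The plan is a compactness-limit argument exploiting that the family $\{u_{\lambda_n}\}$ inherits a uniform bound from the upper estimate on $m_{\lambda_n}$ produced in the proof of Theorem~\ref{thm-2}, while the obstruction term $\lambda_n \phi |u|^3 u$ disappears in the limit because $\lambda_n\to 0^-$ and $\phi_{u_{\lambda_n}}|u_{\lambda_n}|^3 u_{\lambda_n}$ stays bounded in a suitable dual space. First, I test the projection construction from Theorem~\ref{thm-2} against a fixed sign-changing minimizer for the limit functional $I_0$ associated with \eqref{1.7} to obtain $\sup_n m_{\lambda_n}<\infty$, and combine this with the two Nehari identities $\langle I'_{\lambda_n}(u_{\lambda_n}), u^\pm_{\lambda_n}\rangle=0$ to deduce $\sup_n \|u_{\lambda_n}\|_{H^1_r}<\infty$. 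Passing to a subsequence, $u_{\lambda_n}\rightharpoonup u_0$ in $H^1_r(\mathbb{R}^3)$, $u_{\lambda_n}\to u_0$ in $L^p(\mathbb{R}^3)$ for every $p\in(2,6)$ by the radial compact embedding, and $u_{\lambda_n}\to u_0$ almost everywhere.

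Next I check that $u_0$ solves \eqref{1.7} and is sign-changing. Plugging any radial $\varphi\in C^\infty_c(\mathbb{R}^3)$ into the weak formulation for $u_{\lambda_n}$, the uniform $D^{1,2}$-bound on $\phi_{u_{\lambda_n}}$ from the Hardy--Littlewood--Sobolev inequality together with $|\lambda_n|\to 0$ annihilates the nonlocal term; the critical term $\int|u_{\lambda_n}|^4 u_{\lambda_n}\varphi$ converges by a.e.\ convergence and the $L^{6/5}$-bound on $|u_{\lambda_n}|^4 u_{\lambda_n}$ paired with $\varphi\in L^6$; and $\int|u_{\lambda_n}|^{q-2}u_{\lambda_n}\varphi$ converges by strong $L^q$-convergence. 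For sign-changedness, $\langle I'_{\lambda_n}(u_{\lambda_n}), u^\pm_{\lambda_n}\rangle=0$ combined with Sobolev and Hardy--Littlewood--Sobolev estimates yields a uniform lower bound $\|u^\pm_{\lambda_n}\|_{L^q}\ge \delta>0$, which transfers to $u_0^\pm$ by radial compact embedding, so $u_0^\pm\not\equiv 0$ and $u_0\in\mathcal{M}_0$, where $\mathcal{M}_0$ is the sign-changing Nehari manifold for \eqref{1.7}.

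It remains to show that $u_0$ attains the least sign-changing energy $c_\star:=\inf_{\mathcal{M}_0} I_0$ of \eqref{1.7}. For the upper bound, pick $v\in\mathcal{M}_0$ with $I_0(v)=c_\star$ and project it onto $\mathcal{M}_{\lambda_n}$: an implicit-function-type analysis of the $2\times 2$ system $\langle I'_{\lambda_n}(t^+v^++t^-v^-), t^\pm v^\pm\rangle=0$ furnishes a unique pair $(t^+_n, t^-_n)\to (1,1)$ as $\lambda_n\to 0^-$, whence $m_{\lambda_n}\le I_{\lambda_n}(t^+_n v^++t^-_n v^-)\to I_0(v)=c_\star$. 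For the matching lower bound, $\langle I'_{\lambda_n}(u_{\lambda_n}),u_{\lambda_n}\rangle=0$ together with $\lambda_n\to 0$ gives $\|u_{\lambda_n}\|^2=\int|u_{\lambda_n}|^6+\int|u_{\lambda_n}|^q+o(1)$, so
\[
I_0(u_{\lambda_n})=\tfrac{1}{3}\int|u_{\lambda_n}|^6+\bigl(\tfrac{1}{2}-\tfrac{1}{q}\bigr)\int|u_{\lambda_n}|^q+o(1);
\]
the Brezis--Lieb lemma for $L^6$ and strong $L^q$-convergence yield $\liminf_n I_0(u_{\lambda_n})\ge I_0(u_0)$. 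Since $I_0(u_{\lambda_n})-m_{\lambda_n}=o(1)$ and $u_0\in\mathcal{M}_0$ forces $I_0(u_0)\ge c_\star$, chaining the inequalities produces $I_0(u_0)=c_\star$, and Corollary~\ref{thm-5} supplies the nodal-domain count. The main obstacle I anticipate is the two-sided projection argument: one must verify existence, uniqueness and continuity of $(t^+, t^-)$ on $\mathcal{M}_{\lambda_n}$ for all $n$ large in the presence of the cross-coupling term $\int\phi_{v^-}|v^+|^5$, while exploiting both $\lambda_n<0$ (favorable sign in $I_{\lambda_n}$) and $|\lambda_n|\to 0$ to force $(t^+_n,t^-_n)\to (1,1)$.
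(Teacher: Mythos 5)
Your overall architecture (boundedness from the energy bound, passing to a weak limit solving \eqref{1.7}, projecting a fixed $z_0 \in \mathcal{M}_0$ onto $\mathcal{M}_{\lambda_n}$ with $(t^+_n,t^-_n)\to(1,1)$ to match the least energy) is essentially the paper's three-claim scheme, and your energy bookkeeping via Br\'ezis--Lieb and the strong $L^q$ convergence is sound. However, there is a genuine gap in the step where you assert a \emph{uniform lower bound} $\|u^\pm_{\lambda_n}\|_{L^q}\ge\delta>0$ from the Nehari identities together with Sobolev and Hardy--Littlewood--Sobolev estimates. Those identities only give
\begin{equation*}
\Lambda_2^2 \le \|u^\pm_{\lambda_n}\|^2
= \int_{\mathbb{R}^3}|u^\pm_{\lambda_n}|^6\,\mathrm{d}x
+ \int_{\mathbb{R}^3}|u^\pm_{\lambda_n}|^q\,\mathrm{d}x + o(1),
\end{equation*}
so the entire mass on the right could concentrate in the $L^6$ term while $\int|u^\pm_{\lambda_n}|^q\to 0$. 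Because the embedding $H^1_r(\mathbb{R}^3)\hookrightarrow L^6$ is not compact, strong $L^q$ convergence alone cannot rule this out, and this is precisely the scenario that makes $u_0^\pm\ne 0$ the hardest part of the theorem. The paper explicitly flags this as the main obstruction (the subcritical-case lower-bound trick you are invoking fails here) and resolves it differently: it first establishes the sharp a priori bound $m_{\lambda_n}<\frac{2}{3}S^{3/2}$ (Lemma 6.1) and $m_0<c_0+\frac{1}{3}S^{3/2}$ (Remark 5.10), then argues by contradiction on the three cases $u_0^\pm\equiv 0$, $u_0^+\not\equiv 0 \wedge u_0^-\equiv 0$, $u_0^+\equiv 0 \wedge u_0^-\not\equiv 0$. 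In each case it uses the Sobolev inequality to show that a concentrating piece carries energy at least $\frac{1}{3}S^{3/2}$, and combining one or two such pieces with the surviving nodal part forces $m_0\ge \frac{2}{3}S^{3/2}$ or $m_0\ge c_0+\frac{1}{3}S^{3/2}$, contradicting the a priori bound.

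To fix your argument you would need to import exactly this concentration-compactness-type dichotomy: do not try to prevent $L^q$-mass loss directly, but show that if a sign loses mass in the limit, the corresponding $L^6$ concentration costs at least $\frac{1}{3}S^{3/2}$ in energy, which is incompatible with the strict estimates $m_0<\frac{2}{3}S^{3/2}$ and $m_0<c_0+\frac{1}{3}S^{3/2}$. Without these quantitative upper bounds on $m_{\lambda}$ and $m_0$ (which your proposal never establishes, since your projection only gives $m_{\lambda_n}\to c_\star$, not the strict $\frac{2}{3}S^{3/2}$ bound needed for the contradiction), the sign-changedness of $u_0$ does not follow.
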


\begin{rem}
To out knowledge, this paper is the first attempt to
prove the existence and asymptotic behavior of least energy
radial sign-changing solutions for system \eqref{1.6}.

Moreover, for proving the asymptotic behavior of least energy
sign-changing solutions of system \eqref{1.6} like Theorem
\ref{thm-4}, we refer the interested readers to
\cite{WDB19,CXP21}. However, the methods of analyzing the
asymptotic behavior used in \cite{WDB19,CXP21} heavily depend on
the following inequality:
\begin{align*}
0<\rho< \|u^\pm_{\lambda_n}\|^2
\le \int_{\mathbb{R}^{3}}|u^\pm_{\lambda_n}|^6\mathrm{d}x
+\mu\int_{\mathbb{R}^{3}}
f(u^\pm_{\lambda_n})u^\pm_{\lambda_n}\mathrm{d}x
\le 2\mu\int_{\mathbb{R}^{3}}
f(u^\pm_{\lambda_n})u^\pm_{\lambda_n}\mathrm{d}x,
\end{align*}
for $\mu$ large enough, which is used to prove $u_0^\pm\neq 0$.
Therefore, they only analyzed the asymptotic behavior of least
energy sign-changing solutions for Schr\"{o}dinger-Poisson system
which does not involve a critical nonlinearity, which indicates
that the method of analyzing the asymptotic behavior in
\cite{WDB19,CXP21} is not applicable to our paper. Since problem
\eqref{1.7} involves a critical nonlinearity, the main difficulty
we encounter in the proof of Theorem \ref{thm-4} is to obtain
$u_0^\pm\neq 0$. Here we will use a method of proof by
contradiction and some technical analysis to overcome it.

\end{rem}

This paper is constructed as follows. In Section \ref{2}, we
present the variational framework. In Section \ref{3}, we prove
Theorem \ref{thm-3} for the case $\lambda\ge
(\frac{q+2}{8})^2$. In Section
\ref{5}, we complete the proof of Theorem \ref{thm-2} for the case $\lambda<0$. Section \ref{4} is interested in proving
Theorem \ref{thm-5} for the case $\lambda=0$. In Section \ref{6}, we analyze the asymptotic behavior of the least energy radial sign-changing solution $u_\lambda$ obtained in Theorem \ref{thm-2} as
$\lambda\rightarrow 0^-$, and prove Theorem \ref{thm-4}.

\section{Preliminaries}\label{2}

\noindent In this section, we provide some notations, work space
stuff and present some useful propositions which are crucial for
proving our main results. We firstly present some necessary
notations which will be used throughout this paper.

\begin{itemize}
\setlength{\itemsep}{0pt}
\setlength{\parsep}{0pt}
\setlength{\parskip}{0pt}
\setlength{\itemindent}{1em}\item $``\rightharpoonup"$
($``\rightarrow"$) denotes the weak (strong) convergence.
\setlength{\itemindent}{1em}\item
is the norm in the usual Lebesgue space $L^p({\mathbb{R}^{3}})$
for $p\in[1,+\infty)$.
$L^p({\mathbb{R}^{3}})$ is the usual Lebesgue space with the
norm
$$|u|_p
=\left(\int_{\mathbb{R}^{3}}|u|^{p}\mathrm{d}x\right)^{\frac{1}{p}}
\ \ \mathrm{for\ all}\ p\in[1,\infty),\ \ \mathrm{and}\
|u|_\infty=\mathrm{ess}\,\sup\limits_{x\in
\mathbb{R}^3}|u(x)|.$$
\setlength{\itemindent}{1em}\item Denote
$\mathcal{D}^{1,2}(\mathbb{R}^{3}):=\left\{u\in
L^6(\mathbb{R}^{3}):|\nabla u|\in L^2(\mathbb{R}^3)\right\}$
equipped with the norm
$$\|u\|_{\mathcal{D}^{1,2}(\mathbb{R}^{3})}
:=\left(\int_{\mathbb{R}^{3}}|\nabla
u|^2\mathrm{d}x\right)^{\frac{1}{2}}.$$
\setlength{\itemindent}{1em}\item Let
$H^1(\mathbb{R}^{3}):=\left\{u\in L^2(\mathbb{R}^{3}):|\nabla
u|\in L^2(\mathbb{R}^3)\right\}$ be the Hilbert space endowed
with the inner product and  norm
$$\langle u,v\rangle=\int_{\mathbb{R}^{3}}\left(\nabla u \cdot
\nabla v+uv\right)\mathrm{d}x,\ \ \
\|u\|=\left(\int_{\mathbb{R}^{3}}(|\nabla
u|^2+u^2)\mathrm{d}x\right)^{\frac{1}{2}}.$$
\setlength{\itemindent}{1em}\item $\mathcal{C}_0^\infty(\mathbb{R}^3)$ contains infinitely times differentiable functions with compact support in $\mathbb{R}^3$.
\setlength{\itemindent}{1em}\item $o(1)$ denotes a quantity
which goes to $0$ as $n\rightarrow\infty$.
\setlength{\itemindent}{1em}\item $O(\varepsilon)$ denotes a
bounded quantity as $\varepsilon\rightarrow 0$.
\setlength{\itemindent}{1em}\item $C,\ C_i\ (i\in
\mathbb{N}^+)$ denote various positive constants.
\end{itemize}

Let $S$ be the best Sobolev constant for the embedding
$\mathcal{D}^{1,2}(\mathbb{R}^{3})\hookrightarrow
L^6(\mathbb{R}^{3})$, that is,
\begin{equation}\label{2.3}
S:
=\inf\limits_{{u\in\mathcal{D}^{1,2}}(\mathbb{R}^{3})\backslash\{0\}}
\frac{\int_{\mathbb{R}^{3}}|\nabla u|^2\mathrm{d}x}
{\left(\int_{\mathbb{R}^{3}}|u|^{6}\mathrm{d}x\right)^{\frac{1}{3}}}.
\end{equation}
By \cite[Theorem 1.42]{WM14}, $S$ is attained by the following
function
\begin{equation}\label{2.4}
\xi_{\varepsilon}(x):=\frac{
\varepsilon^{\frac{1}{4}}}{(\varepsilon+|x|^2)^{\frac{1}{2}}},
\end{equation}
for any $\varepsilon>0$ and $x\in \mathbb{R}^3$, which satisfies
that
$\|\xi_\varepsilon\|_{\mathcal{D}^{1,2}(\mathbb{R}^{3})}^2
=|\xi_\varepsilon|_6^6=S^{\frac{3}{2}}$. Moreover, Let us define
a subspace of $H^1(\mathbb{R}^3)$ which contains all radial
symmetric functions as 
\begin{equation*}
H_r^{1}(\mathbb{R}^3):=\left\{u\in H^{1}(\mathbb{R}^3):
u(x)=u(|x|)\right\}.
\end{equation*}

Next, we would like to show the following embedding proposition
which will be used frequently.

\begin{prop}[See {{\cite[Theorem B]{WC20}}}]\label{lam-32}
The embedding $H_r^{1}(\mathbb{R}^3)\hookrightarrow
L^r({\mathbb{R}^{3}})$ is continuous for $r\in[2,6]$. Moreover,
the embedding $H_r^{1}(\mathbb{R}^3)\hookrightarrow
L^r({\mathbb{R}^{3}})$ is compact for $r\in[2,6)$.
\end{prop}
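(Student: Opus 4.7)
The plan is to treat the continuous and compact embeddings separately. For the continuous embedding with $r\in[2,6]$, I would just invoke the classical Sobolev embedding $H^1(\mathbb{R}^3)\hookrightarrow L^r(\mathbb{R}^3)$ and notice that $H_r^1(\mathbb{R}^3)$ sits inside $H^1(\mathbb{R}^3)$ as the closed subspace of $O(3)$-invariant elements; the inequality $|u|_r\le C\|u\|$ is inherited verbatim. This part requires nothing beyond the standard theory.

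For the compact embedding with $r\in[2,6)$, the heart of the matter is the Strauss radial decay estimate: there is a universal constant $C>0$ such that every $u\in H_r^1(\mathbb{R}^3)$ satisfies
\begin{equation*}
|u(x)|\le C\,|x|^{-1}\|u\|\quad\text{for a.e.\ }|x|\ge 1.
\end{equation*}
I would derive this by density from $\mathcal C_0^\infty\cap H_r^1$, writing $|u(\rho)|^2=-2\int_\rho^\infty u(s)u'(s)\,ds$, applying Cauchy--Schwarz, and then bounding $\int_\rho^\infty u^2\,ds\le \rho^{-2}\int_\rho^\infty s^2 u^2\,ds\lesssim \rho^{-2}|u|_2^2$ (and analogously for the derivative, bounded by $\rho^{-2}|\nabla u|_2^2$).

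Given a bounded sequence $\{u_n\}\subset H_r^1(\mathbb{R}^3)$, pass to a subsequence $u_n\rightharpoonup u$ in $H_r^1(\mathbb{R}^3)$. For any fixed $R>0$, Rellich--Kondrachov on the ball $B_R$ gives $u_n\to u$ strongly in $L^r(B_R)$. For the tail, I would interpolate via the pointwise decay: for $r\in(2,6)$,
\begin{equation*}
\int_{|x|\ge R}|u_n-u|^r\,dx\le \Bigl(\sup_{|x|\ge R}|u_n-u|\Bigr)^{r-2}\int_{|x|\ge R}|u_n-u|^2\,dx\le C\,R^{-(r-2)}\|u_n-u\|^{r-2}|u_n-u|_2^2,
\end{equation*}
which tends to $0$ as $R\to\infty$ uniformly in $n$ because $\{u_n\}$ is $H^1$-bounded. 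A routine $\varepsilon/2$ splitting (first pick $R$ large to absorb the tail, then $n$ large to absorb the local part via Rellich--Kondrachov) yields $u_n\to u$ in $L^r(\mathbb{R}^3)$.

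The main technical obstacle is establishing the Strauss decay estimate cleanly — in particular, justifying it for general $u\in H_r^1(\mathbb{R}^3)$ rather than smooth representatives, which requires a careful approximation argument. Once the pointwise bound is available, the combination with Rellich--Kondrachov is standard, so the only other subtlety is making sure the interpolation exponent $r-2$ is strictly positive, which is exactly what forces the restriction to $r>2$ (and implicitly explains why the result degenerates at the endpoints).
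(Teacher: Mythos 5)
The paper does not prove this proposition; it is cited verbatim from \cite[Theorem B]{WC20}, so there is no in-text proof to compare against. What you supply is the classical Strauss radial lemma argument, and it is the standard self-contained route: continuity is inherited from the Sobolev embedding on the closed subspace $H_r^1\subset H^1$, the pointwise decay $|u(x)|\le C|x|^{-1}\|u\|$ follows from writing $|u(\rho)|^2=-2\int_\rho^\infty u\,u'\,ds$, Cauchy--Schwarz, and the weight $s^2$ hidden in the spherical volume element, and compactness is assembled from Rellich--Kondrachov on balls plus the tail interpolation $|u|^r\le\bigl(\sup_{|x|\ge R}|u|\bigr)^{r-2}|u|^2$. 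The computation is correct; one cosmetic point is that the decay estimate in fact holds for all $|x|>0$, not only $|x|\ge 1$, though the restriction is harmless since only the behaviour at infinity matters.

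There is, however, a discrepancy between what you prove and what the proposition claims, and it is worth being explicit about it: your interpolation step requires $r-2>0$, so your argument yields compactness for $r\in(2,6)$ only, which you correctly flag. The proposition as written asserts compactness for $r\in[2,6)$, i.e.\ including $r=2$. That endpoint is actually \emph{false}: taking the radial dilation $u_n(x)=n^{-3/2}\varphi(x/n)$ with $\varphi\in\mathcal{C}_0^\infty(\mathbb{R}^3)$ radial gives a sequence bounded in $H_r^1(\mathbb{R}^3)$ with $u_n\rightharpoonup 0$, $|u_n|_2=|\varphi|_2$ constant, so $u_n\not\to 0$ in $L^2(\mathbb{R}^3)$. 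Thus the embedding $H_r^1(\mathbb{R}^3)\hookrightarrow L^2(\mathbb{R}^3)$ is \emph{not} compact, and the statement as quoted overreaches; the correct range is $r\in(2,6)$, which is exactly what your proof delivers and what the classical Strauss/Lions result asserts. This is immaterial for the paper, since every place the compactness is invoked (e.g.\ passing to the limit in $L^q$ with $q\in(5,6)$, or in $L^r$ with $r\in(2,6)$ for the subcritical terms) stays strictly above $r=2$, but the proposition's stated range should read $(2,6)$, and your proof is the more accurate one.
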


The well known Hardy-Littlewood-Sobolev inequality presented as
follows which plays a crucial role in constructing the
variational framework for system \eqref{1.6}.

\begin{prop}[{\bfseries Hardy-Littlewood-Sobolev inequality, see}
{{\cite{LE83,LE01}}}]\label{prohlsi}
    Let $r,t>1$ and $0<\alpha<3$ with
    $\frac{1}{r}+\frac{1}{t}+\frac{\alpha}{3}=2$, $u\in
    L^r(\mathbb{R}^3)$ and $v\in L^t(\mathbb{R}^3)$. Then, there
    exists a sharp constant $C(r,t,\alpha)>0$ (independent of $u$
    and $v$) such that
    \begin{equation}\label{hlsi}
    \int_{\mathbb{R}^3}\int_{\mathbb{R}^3}
    \frac{u(x)v(y)}{|x-y|^\alpha}
    \mathrm{d}x \mathrm{d}y
    \leq C(r,t,\alpha)|u|_{r}|v|_{t}.
    \end{equation}
    In particular, if $r=t=\frac{6}{6-\alpha}$, then
    \begin{equation*}
    C(r,t,\alpha)
    =C(\alpha)
    =\frac{\Gamma((3-\alpha)/2)\pi^{\alpha/2}}{\Gamma(3-\alpha/2)}
    \left(\frac{\Gamma(3)}{\Gamma(3/2)}\right)^{(3-\alpha)/3},
    \end{equation*}
    and there is equality in (\ref{hlsi}) if and only if $u\equiv
    (const.) v$ and
    \begin{equation*}
    v(x)=A\left(1+\lambda^2|x-z|^2\right)^{-
    \frac{6-\alpha}{2}}
    \end{equation*}
    for some given constants $A\in \mathbb{C}$, $\lambda\in
    \mathbb{R}\setminus\{0\}$ and for some point $z\in
    \mathbb{R}^3$.
\end{prop}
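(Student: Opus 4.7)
The plan is to obtain the inequality in two stages: first a non-sharp bound by Young's inequality in Lorentz spaces applied to the Riesz potential, and then identification of the sharp constant in the diagonal case by symmetric decreasing rearrangement together with a conformal compactification.

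For the general (non-sharp) bound I would study the Riesz potential $I_\alpha f(x) := \int_{\mathbb{R}^3} |x-y|^{-\alpha} f(y)\,\mathrm{d}y$. The kernel $|x|^{-\alpha}$ lies in the weak Lebesgue space $L^{3/\alpha,\infty}(\mathbb{R}^3)$, as $|\{|x|^{-\alpha}>\sigma\}| = C\sigma^{-3/\alpha}$. Young's inequality in Lorentz spaces then gives $\|I_\alpha f\|_{q}\le C|f|_p$ whenever $1<p<3/\alpha$ and $\tfrac{1}{q}=\tfrac{1}{p}-\tfrac{3-\alpha}{3}$; by duality this is exactly \eqref{hlsi} with some admissible (not-yet-sharp) constant $C(r,t,\alpha)$.

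For the sharp constant in the diagonal case $r=t=\tfrac{6}{6-\alpha}$ I would follow Lieb's rearrangement approach. The Riesz rearrangement inequality shows the bilinear form is not decreased when $u$ and $v$ are replaced by their common symmetric decreasing rearrangements while the $L^r$ norms are preserved, so one may restrict to radial nonincreasing $u,v$. Inverse stereographic projection $\mathbb{R}^3 \to S^3$ transforms the kernel covariantly, and because the $L^{6/(6-\alpha)}$-normalisation coincides with the conformal weight in this diagonal case, the extremal problem becomes a variational problem on the compact manifold $S^3$. A maximiser is then extracted by a concentration-compactness argument, and the resulting Euler--Lagrange integral equation has as its only positive radial solutions (up to dilation, translation and multiplication by a constant) the profiles $A(1+\lambda^2|x-z|^2)^{-(6-\alpha)/2}$ stated in the proposition, uniqueness being established by a moving-plane argument. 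Evaluating the functional on one such extremiser produces the closed-form expression for $C(\alpha)$.

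The main obstacle will be the compactness step: both the dilation and translation symmetries of $\mathbb{R}^3$ are noncompact, so a priori a maximising sequence may concentrate at a point or escape to infinity. The role of the stereographic projection is precisely to absorb the dilations into the rotations of $S^3$; afterwards only the finite-dimensional residual conformal orbit needs to be quotiented out, which is handled by choosing in each sequence a representative whose $L^r$-mass is symmetrically concentrated at a fixed pole.
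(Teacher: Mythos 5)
The paper does not prove this proposition; it is quoted verbatim from Lieb's 1983 Annals paper and the Lieb--Loss book (the references cited), so there is no in-text proof to compare against. Your sketch is therefore best judged as a reconstruction of Lieb's argument, and as such it captures the broad architecture (non-sharp bound via weak-type Young, Riesz rearrangement to restrict to radial decreasing functions, conformal pull-back to $S^3$ in the diagonal case), but there are concrete problems with it.

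First, a small slip: with the kernel written as $|x|^{-\alpha}$, membership in $L^{3/\alpha,\infty}$ is right, but the resulting admissible range for the source exponent is $1<p<\frac{3}{3-\alpha}$, not $1<p<3/\alpha$; the relation $\frac{1}{q}=\frac{1}{p}-\frac{3-\alpha}{3}$ forces $\frac{1}{p}>\frac{3-\alpha}{3}$. Second, and more seriously, the compactness step is the true content of the sharp inequality and is not resolved by your plan. Stereographic projection to $S^3$ does not remove noncompactness: the full Möbius group of $S^3$ is still noncompact, and the conformal dilations that cause trouble on $\mathbb{R}^3$ reappear as parabolic Möbius maps fixing a pole. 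Declaring that one ``chooses a representative whose $L^r$-mass is symmetrically concentrated at a fixed pole'' is not a well-posed normalisation and does not by itself give relative compactness of a maximising sequence. Lieb's original argument works directly on $\mathbb{R}^n$ with radial decreasing functions, fixes the scaling by hand, and then uses a Helly-type selection together with a delicate lemma ruling out degeneracy of the limit; an alternative is Carlen--Loss's ``competing symmetries'' iteration, which sidesteps compactness entirely. Invoking ``a concentration-compactness argument'' is anachronistic (Lions's framework postdates Lieb's paper) and, without specifying how the vanishing/dichotomy alternatives are excluded, does not close the gap. Third, attributing the classification of extremals to a moving-plane argument is borrowed from Chen--Li--Ou (2006), which is an independent and much later route; Lieb identifies the optimiser by combining the equality case of the Riesz rearrangement inequality with the spherical (conformal) picture, obtaining the profiles $A\bigl(1+\lambda^2|x-z|^2\bigr)^{-(6-\alpha)/2}$ by pulling back constants from $S^3$, and only then evaluates $C(\alpha)$. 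Any of these routes can be made rigorous, but as written your proposal interleaves two different proofs and leaves the essential compactness step open.
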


    Motivated by Proposition \ref{prohlsi}, for any $u\in
    H_r^1(\mathbb{R}^3)$, we know that
    \begin{equation}\label{6.96}
    \int_{\mathbb{R}^3}\int_{\mathbb{R}^3}
    \frac{|u(x)|^s|u(y)|^s}{|x-y|}
    \mathrm{d}x \mathrm{d}y
    \end{equation}
    is well-defined if for $u\in L^{st}(\mathbb{R}^3)$ satisfying
    \[\frac{2}{t}+\frac{1}{3}=2,\]
    $i.e.$, $t=\frac{6}{5}$.
    Then, the Sobolev continuous embedding
    $H_r^{1}(\mathbb{R}^3)\hookrightarrow L^p(\mathbb{R}^3)$ for
    $p\in \left[2,6\right]$ shows that
    \begin{equation*}
    2\le st\le 6\Rightarrow\frac{5}{3}\leq s\leq 5.
    \end{equation*}
    Thus, \eqref{6.96} is well-defined if $s\in[\frac{5}{3},5]$.
    The numbers $\frac{5}{3}$ and $5$ are called the lower and
    the upper critical exponents with respect to the
    Hardy-Littlewood-Sobolev inequality, respectively.

For a given $u\in H^{1}(\mathbb{R}^3)$, Lax-Milgram theorem
implies that there exists a unique $\phi_u\in
\mathcal{D}^{1,2}(\mathbb{R}^{3})$ such that $-\Delta
\phi=|u|^{5}$ in a weak sense. Moreover, according to
\cite[Theorem 6.21]{LE01}, we see that
\begin{equation}\label{2.55}
\phi_u(x)
=\frac{1}{4\pi}
\int_{\mathbb{R}^{3}}\frac{|u(y)|^{5}}{|x-y|}\mathrm{d}y> 0,
\end{equation}
and
\begin{equation*}
\int_{\mathbb{R}^{3}}\phi_{u}|u|^5\mathrm{d}x
=\frac{1}{4\pi}\int_{\mathbb{R}^{3}}\int_{\mathbb{R}^{3}}
\frac{|u(x)|^{5}|u(y)|^{5}}{|x-y|}\mathrm{d}x\mathrm{d}y.
\end{equation*}
It derives from the above equality and Fubini theorem that
\begin{equation*}
\int_{\mathbb{R}^{3}}\phi_{u^+}|u^-|^5\mathrm{d}x
=\int_{\mathbb{R}^{3}}\phi_{u^-}|u^+|^5\mathrm{d}x>0.
\end{equation*}

Next, we list some properties of $\phi_u$, which can directly
deduce from \cite[Lemma 2.1]{LFY14}.

\begin{prop}\label{prop-1}
For any $u\in H_r^{1}(\mathbb{R}^3)$, $-\Delta \phi=|u|^{5}$
possesses a unique weak solution $\phi_u\ge 0$ in
$\mathcal{D}^{1,2}(\mathbb{R}^3)$. Moreover, there hold

  (1) $\|\phi_u\|_{\mathcal{D}^{1,2}(\mathbb{R}^{3})}^2
  =\int_{\mathbb{R}^{3}}\phi_{u}|u|^5\mathrm{d}x$;

  (2) $\phi_{su}=s^{5}{\phi_u}$, for any $s>0$;

  (3) $\|\phi_{u}\|_{\mathcal{D}^{1,2}(\mathbb{R}^{3})}\le C_1
  \|u\|^{5}$,
  $\int_{\mathbb{R}^{3}}\phi_{u}|u|^5\mathrm{d}x\le
  C_2\|u\|^{10}$ for some  $C_1,C_2>0$ (independent of $u$);

  (4) if $u$ is a radial function, and $\phi_u$ is also radial;

  (5) if $u_n\rightharpoonup u$ in $H_r^{1}(\mathbb{R}^3)$ and
  $u_n\to u$ a.e. in $\mathbb{R}^3$, then
  $\phi_{u_n}\rightharpoonup \phi_u$ in
  $\mathcal{D}^{1,2}(\mathbb{R}^{3})$ and
  $$\int_{\mathbb{R}^{3}}\phi_{u_n} |u_n|^{5}\mathrm{d}x
  -\int_{\mathbb{R}^{3}}\phi_{u_n-u} |u_n-u|^{5}\mathrm{d}x
  =\int_{\mathbb{R}^{3}}\phi_{u} |u|^{5}\mathrm{d}x+o(1).$$
\end{prop}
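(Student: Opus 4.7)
The plan is to establish the five properties in Proposition \ref{prop-1} by combining Lax--Milgram uniqueness, the Hardy--Littlewood--Sobolev inequality (Proposition \ref{prohlsi}), the radial compact embedding (Proposition \ref{lam-32}), and a standard Brezis--Lieb type argument for Coulomb-type nonlocal terms.

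First I would dispatch (1) and (2). For (1), I would test the weak formulation $\int_{\mathbb{R}^3}\nabla\phi_u\cdot\nabla\varphi\,\mathrm{d}x=\int_{\mathbb{R}^3}|u|^5\varphi\,\mathrm{d}x$ with the admissible choice $\varphi=\phi_u\in\mathcal{D}^{1,2}$, which is legitimate because the pairing $\int|u|^5\phi_u\,\mathrm{d}x$ is finite by the bound derived in (3). For (2), I would note that $-\Delta(s^5\phi_u)=s^5|u|^5=|su|^5$ for any $s>0$, so the uniqueness clause of Lax--Milgram forces $\phi_{su}=s^5\phi_u$.

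For (3), I would apply Proposition \ref{prohlsi} with $\alpha=1$ and $r=t=\frac{6}{5}$ to the pair $|u|^5,|u|^5$, which is legitimate since $u\in L^6(\mathbb{R}^3)$ implies $|u|^5\in L^{6/5}(\mathbb{R}^3)$. This yields
\[
\int_{\mathbb{R}^3}\phi_u|u|^5\,\mathrm{d}x
=\frac{1}{4\pi}\int_{\mathbb{R}^3}\int_{\mathbb{R}^3}\frac{|u(x)|^5|u(y)|^5}{|x-y|}\,\mathrm{d}x\,\mathrm{d}y
\le C\,|u|_6^{10},
\]
and combining this with (1) and the Sobolev/radial bound $|u|_6\le C\|u\|$ gives both desired estimates. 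For (4), I would observe that when $u$ is radial and $R\in O(3)$ is any rotation, $\phi_u(R\,\cdot)$ solves the same Poisson equation as $\phi_u$, so uniqueness implies $\phi_u$ is radial.

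The main obstacle is (5). Weak convergence $u_n\rightharpoonup u$ in $H^1_r$ together with (3) yields that $\{\phi_{u_n}\}$ is bounded in $\mathcal{D}^{1,2}$, so up to a subsequence $\phi_{u_n}\rightharpoonup\phi_\ast$ weakly. To identify $\phi_\ast=\phi_u$, I would test $\int\nabla\phi_{u_n}\cdot\nabla\psi\,\mathrm{d}x=\int|u_n|^5\psi\,\mathrm{d}x$ against arbitrary $\psi\in\mathcal{C}_0^\infty(\mathbb{R}^3)$: the left-hand side converges by definition of weak convergence, and the right-hand side passes to $\int|u|^5\psi\,\mathrm{d}x$ because $u_n\to u$ almost everywhere (Proposition \ref{lam-32}) while $\{|u_n|^5\}$ is bounded in $L^{6/5}$ on the compact support of $\psi$, allowing one to invoke Vitali's convergence theorem. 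For the splitting identity, I would set $v_n:=u_n-u$ and write
\[
\int_{\mathbb{R}^3}\phi_{u_n}|u_n|^5\,\mathrm{d}x-\int_{\mathbb{R}^3}\phi_{v_n}|v_n|^5\,\mathrm{d}x
=\int_{\mathbb{R}^3}\phi_{u_n}\bigl(|u_n|^5-|v_n|^5\bigr)\,\mathrm{d}x
+\int_{\mathbb{R}^3}\bigl(\phi_{u_n}-\phi_{v_n}\bigr)|v_n|^5\,\mathrm{d}x,
\]
and then apply the classical Brezis--Lieb lemma to $|u_n|^5$ in $L^{6/5}$, using the weak convergences $\phi_{v_n}\rightharpoonup 0$ and $\phi_{u_n}\rightharpoonup\phi_u$ together with the Hardy--Littlewood--Sobolev bound to control the cross terms. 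The delicate point is handling the quintic Brezis--Lieb remainder $|u_n|^5-|v_n|^5-|u|^5$ against the Riesz kernel, which is precisely where the almost everywhere convergence and uniform $L^{6/5}$ bounds must be carefully combined.
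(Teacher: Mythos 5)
The paper does not prove this proposition at all: it cites \cite[Lemma 2.1]{LFY14} and lists the properties without argument. Your proposal therefore supplies a self-contained proof where the paper has none, and I will evaluate it on its own terms.

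Parts (1)--(4) are correct and carried out in the standard way. For (5), your decomposition
\[
\int_{\mathbb{R}^{3}}\phi_{u_n}|u_n|^5\,\mathrm{d}x
-\int_{\mathbb{R}^{3}}\phi_{v_n}|v_n|^5\,\mathrm{d}x
=\int_{\mathbb{R}^{3}}\phi_{u_n}\bigl(|u_n|^5-|v_n|^5\bigr)\,\mathrm{d}x
+\int_{\mathbb{R}^{3}}\bigl(\phi_{u_n}-\phi_{v_n}\bigr)|v_n|^5\,\mathrm{d}x
\]
is the right one, and the first term is handled correctly once one notes that the variant of Br\'ezis--Lieb applicable here gives $|u_n|^5-|v_n|^5\to|u|^5$ \emph{strongly} in $L^{6/5}(\mathbb{R}^3)$ (not merely in $L^1$, which is what the classical statement of the lemma provides); this strong convergence pairs against the weak $L^6$ convergence $\phi_{u_n}\rightharpoonup\phi_u$.

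The gap is in the second cross term. You propose to control $\int(\phi_{u_n}-\phi_{v_n})|v_n|^5\,\mathrm{d}x$ using the weak convergences $\phi_{u_n}\rightharpoonup\phi_u$ and $\phi_{v_n}\rightharpoonup 0$. But $|v_n|^5\rightharpoonup 0$ only weakly in $L^{6/5}$, so a weak--weak pairing does not pass to the limit, and the argument as written breaks down. The missing observation is that $-\Delta(\phi_{u_n}-\phi_{v_n})=|u_n|^5-|v_n|^5\to|u|^5$ strongly in $L^{6/5}$, and since $(-\Delta)^{-1}\colon L^{6/5}(\mathbb{R}^3)\to\mathcal{D}^{1,2}(\mathbb{R}^3)\hookrightarrow L^6(\mathbb{R}^3)$ is continuous (again by Hardy--Littlewood--Sobolev duality), one actually has $\phi_{u_n}-\phi_{v_n}\to\phi_u$ \emph{strongly} in $L^6(\mathbb{R}^3)$. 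Then the cross term splits as $\int(\phi_{u_n}-\phi_{v_n}-\phi_u)|v_n|^5+\int\phi_u|v_n|^5$; the first piece vanishes by H\"older (strong $L^6$ times bounded $L^{6/5}$) and the second vanishes because $\phi_u\in L^6$ and $|v_n|^5\rightharpoonup 0$ in $L^{6/5}$. With this correction the scheme closes.
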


\noindent Substituting \eqref{2.55} into system \eqref{1.6}, we
rewrite system \eqref{1.6} into the following equation
\begin{equation*}
  -\Delta u+u+\lambda\phi_u |u|^3u =|u|^4u+ |u|^{q-2}u,\ \ x \in
  \mathbb{R}^{3}.
\end{equation*}
Next, we define the energy functional
$\mathcal{I}_\lambda:H^{1}(\mathbb{R}^3)\rightarrow \mathbb{R}$
corresponding to system \eqref{1.6} by
\begin{equation*}
\mathcal{I}_\lambda(u)=\frac{1}{2}\|u\|^2
+\frac{\lambda}{10}\int_{\mathbb{R}^{3}}\phi_u |u|^5\mathrm{d}x
-\frac{1}{6}\int_{\mathbb{R}^{3}}|u|^{6}\mathrm{d}x
-\frac{1}{q}\int_{\mathbb{R}^{3}}|u|^{q}\mathrm{d}x.
\end{equation*}
Clearly, $\mathcal{I}_\lambda$ is well-defined in
$H^{1}(\mathbb{R}^3)$ and of class $\mathcal{C}^{1}$ for any $\lambda\in \mathbb{R}$, and for any $u,v\in H^{1}(\mathbb{R}^3)$,
\begin{equation}\label{2.7}
\langle \mathcal{I}_\lambda'(u),v\rangle
=\int_{\mathbb{R}^{3}}(\nabla u\cdot\nabla v+uv)\mathrm{d}x
+\lambda\int_{\mathbb{R}^{3}}\phi_u |u|^3u v\mathrm{d}x
-\int_{\mathbb{R}^{3}}|u|^{4}uv\mathrm{d}x
-\int_{\mathbb{R}^{3}}|u|^{q-2}uv\mathrm{d}x.
\end{equation}
Thereby, we can directly use the variational methods to
investigate the nontrivial weak solutions of system \eqref{1.6}
by studying the critical points of the functional
$\mathcal{I}_\lambda$ in $H^1(\mathbb{R}^3)$.

In what follows, we recall some definitions of critical points
for the functional $\mathcal{I}_\lambda$.

\begin{defn}\label{def-6}

(1) If $u\in H^{1}(\mathbb{R}^3)$ such that $\langle
\mathcal{I}_\lambda'(u),v\rangle=0$ for any $v\in
H^{1}(\mathbb{R}^3)$, then we say that $u$ is a weak solution of
system \eqref{1.6}.

(2) A nontrivial solution $u$ of \eqref{1.6} is called a ground
state (or least energy) solution if
\begin{equation}\label{9.25}
\mathcal{I}_\lambda(u)
=c_\lambda:
=\inf\limits_{v\in\mathcal{N}_\lambda}\mathcal{I}_\lambda(v),
\end{equation}
where
\begin{equation*}
\mathcal{N}_\lambda:=\left\{u\in
H_r^1(\mathbb{R}^3)\setminus\{0\}:\langle
\mathcal{I}_\lambda'(u),u\rangle=0\right\}.
\end{equation*}

(3) If $u\in H^{1}(\mathbb{R}^3)$ is a weak solution of system
\eqref{1.6} with $u^\pm\neq0$, then $u$ is a sign-changing
solution of system \eqref{1.6}, where $u^+:=\mathrm{max}\{u,0\}\
\mathrm{and}\ u^-:=\mathrm{min}\{u,0\}.$

(4) A sign-changing solution $u$ of \eqref{1.6} is called a least
energy sign-changing solution if
\begin{equation}\label{9.26}
\mathcal{I}_\lambda(u)
=m_\lambda:
=\inf\limits_{v\in \mathcal{M}_\lambda} \mathcal{I}_\lambda(v),
\end{equation}
where
\begin{equation*}
\mathcal{M}_\lambda:=\left\{u\in
H_r^1(\mathbb{R}^3):u^{\pm}\neq0,\ \langle
\mathcal{I}_\lambda'(u),u^+\rangle=\langle
\mathcal{I}_\lambda'(u),u^-\rangle=0\right\}.
\end{equation*}
\end{defn}


\section{Nonexistence of nontrivial solutions for the case
$\lambda\ge (\frac{q+2}{8})^2$}
\label{3}
\noindent In this section, we are ready to prove the nonexistence
of nontrivial solutions for system \eqref{1.6} for the case of
$\lambda\ge (\frac{q+2}{8})^2$, and prove Theorem \ref{thm-3}.

\begin{proof}[\textbf{Proof of Theorem \ref{thm-3}}]
By using $q\in(2,6)$ and Young's inequality, we see that
\begin{equation}\label{66}
\int_{\mathbb{R}^{3}}|u|^q\mathrm{d}x
\le\frac{6-q}{4}\int_{\mathbb{R}^{3}}|u|^{2}\mathrm{d}x
+\frac{q-2}{4}\int_{\mathbb{R}^{3}}|u|^{6}\mathrm{d}x.
\end{equation}
By $-\Delta \phi_u=|u|^5$, it holds that
\begin{align*}
\int_{\mathbb{R}^3}|u|^6\mathrm{d}x
=\int_{\mathbb{R}^3}-\Delta \phi_{u} |u|\mathrm{d}x
&\le \frac{1}{2\tau^2}\int_{\mathbb{R}^3}|\nabla
\phi_{u}|^2\mathrm{d}x
+\frac{\tau^2}{2}\int_{\mathbb{R}^3}|\nabla |{u}||^2\mathrm{d}x
\nonumber\\&\leq
\frac{1}{2\tau^2}\int_{\mathbb{R}^3}\phi_{u}|u|^5\mathrm{d}x
+\frac{\tau^2}{2}\int_{\mathbb{R}^3}|\nabla {u}|^2\mathrm{d}x,
\end{align*}
for any $\tau\in\mathbb{R}\setminus\{0\}$. Since $\lambda\ge
(\frac{q+2}{8})^2>0$, taking $\tau^2=\lambda^{-\frac{1}{2}}$,
then
\begin{align}\label{6.2}
\int_{\mathbb{R}^3}\phi_{u}|u|^5\mathrm{d}x
\ge 2\lambda^{-\frac{1}{2}} \int_{\mathbb{R}^3}|{u}|^6\mathrm{d}x
-\lambda^{-1}\int_{\mathbb{R}^3}|\nabla {u}|^2\mathrm{d}x.
\end{align}
If $(u,\phi_u)\in
H^1(\mathbb{R}^3)\times\mathcal{D}^{1,2}(\mathbb{R}^3)$ is a
solution of system \eqref{1.6}, it follows that
\begin{equation*}
\int_{\mathbb{R}^{3}}(|\nabla u|^2+u^2)\mathrm{d}x
+{\lambda}\int_{\mathbb{R}^{3}}\phi_u |u|^5\mathrm{d}x
-\int_{\mathbb{R}^{3}}|u|^{6}\mathrm{d}x
-\int_{\mathbb{R}^{3}}|u|^{q}\mathrm{d}x=0.
\end{equation*}
Then, it obtains from $\lambda\ge (\frac{q+2}{8})^2$, \eqref{66}
and \eqref{6.2} that
\begin{align*}
0&=\int_{\mathbb{R}^{3}}(|\nabla u|^2+u^2)\mathrm{d}x
+{\lambda}\int_{\mathbb{R}^{3}}\phi_u |u|^5\mathrm{d}x
-\int_{\mathbb{R}^{3}}|u|^{6}\mathrm{d}x
-\int_{\mathbb{R}^{3}}|u|^{q}\mathrm{d}x
\\&\ge\int_{\mathbb{R}^{3}}(|\nabla u|^2+u^2)\mathrm{d}x
+\lambda\left(2\lambda^{-\frac{1}{2}}
\int_{\mathbb{R}^3}|{u}|^6\mathrm{d}x
-\lambda^{-1}\int_{\mathbb{R}^3}|\nabla {u}|^2\mathrm{d}x\right)
\\&\quad-\int_{\mathbb{R}^{3}}|u|^{6}\mathrm{d}x
-\frac{6-q}{4}\int_{\mathbb{R}^{3}}|u|^{2}\mathrm{d}x
-\frac{q-2}{4}\int_{\mathbb{R}^{3}}|u|^{6}\mathrm{d}x
\\&=\frac{q-2}{4}\int_{\mathbb{R}^{3}}u^2\mathrm{d}x
+\left(2\lambda^{\frac{1}{2}}-\frac{q+2}{4}\right)
\int_{\mathbb{R}^{3}}|u|^{6}\mathrm{d}x
\\&\ge \frac{q-2}{4}\int_{\mathbb{R}^{3}}u^2\mathrm{d}x.
\end{align*}
Since $q\in (2,6)$, then $u\equiv 0$. Thus, system \eqref{1.6}
has no nontrivial solution if $\lambda\ge (\frac{q+2}{8})^2$.
\end{proof}

\begin{rem}
From the above nonexistence result, we know that system
\eqref{1.6} can only possess nontrivial solutions when $\lambda
<(\frac{q+2}{8})^2$ and $q\in(2,6).$
\end{rem}

\section{Least energy radial sign-changing solutions for the case
$\lambda<0$}\label{5}
\noindent In this section, we are devoted to investigating the
existence of least energy radial sign-changing solutions for system
\eqref{1.6} with $\lambda<0$, and prove Theorem \ref{thm-2}. This
section will be divided into four subsections: proving some preliminary lemmas which are crucial for proving Theorem \ref{thm-2},
constructing a sign-changing $(\mathrm{PS})_{m_\lambda}$ sequence
for the functional $\mathcal{I}_\lambda$, estimating the least
energy $m_\lambda$ on the sign-changing Nehari manifold
$\mathcal{M}_\lambda$ and proving the $(\mathrm{PS})_{m_\lambda}$
condition, respectively.

\subsection{Some preliminary lemmas}

\noindent In this subsection, we will prove some preliminary
results which play an important role in the proof of Theorem
\ref{thm-2}. First we show that the sign-changing Nehari manifold
$\mathcal{M}_\lambda$ is non-empty.

\begin{lem}\label{lam-16}
Assume that $\lambda<0$ holds. Let $u\in H_r^{1}(\mathbb{R}^3)$
with $u^\pm\neq0$, then there exists a unique pair $(s_u,t_u)\in
(0,\infty)\times (0,\infty)$ such that $s_uu^++t_uu^-\in
\mathcal{M}_\lambda$. Moreover,
$$\mathcal{I }_\lambda(s_uu^++t_uu^-)
=\max\limits_{s,t\ge0}\,\mathcal{I}_\lambda(su^++tu^-).$$
\end{lem}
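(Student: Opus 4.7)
The strategy is to reduce the statement to a two-variable maximization. Because $u^+$ and $u^-$ have disjoint supports, the cross terms in $\|s u^+ + t u^-\|^2$ and in the $L^p$-integrals vanish, and $|s u^+ + t u^-|^5 = s^5(u^+)^5 + t^5|u^-|^5$ pointwise a.e. Combined with \eqref{2.55} and the linearity of $-\Delta^{-1}$, this gives
\begin{equation*}
\phi_{s u^+ + t u^-} = s^5\phi_{u^+} + t^5\phi_{u^-},
\end{equation*}
and hence the explicit formula
\begin{align*}
h(s,t) := \mathcal{I}_\lambda(s u^+ + t u^-) ={}& \tfrac{s^2}{2}\|u^+\|^2 + \tfrac{t^2}{2}\|u^-\|^2 \\
& + \tfrac{\lambda}{10}\bigl(s^{10}B_+ + 2s^5 t^5 D + t^{10}B_-\bigr) \\
& - \tfrac{s^6}{6}|u^+|_6^6 - \tfrac{t^6}{6}|u^-|_6^6 - \tfrac{s^q}{q}|u^+|_q^q - \tfrac{t^q}{q}|u^-|_q^q,
\end{align*}
where $B_\pm := \int_{\mathbb{R}^3}\phi_{u^\pm}|u^\pm|^5\mathrm{d}x$ and $D := \int_{\mathbb{R}^3}\phi_{u^+}|u^-|^5\mathrm{d}x = \int_{\mathbb{R}^3}\phi_{u^-}|u^+|^5\mathrm{d}x > 0$. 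Comparing with \eqref{2.7}, one checks that $\langle\mathcal{I}_\lambda'(s u^+ + t u^-), s u^\pm\rangle$ coincide with $s\,\partial_s h(s,t)$ and $t\,\partial_t h(s,t)$ respectively; hence $s_u u^+ + t_u u^- \in \mathcal{M}_\lambda$ amounts to $(s_u, t_u) \in (0,\infty)^2$ being a critical point of $h$. The lemma therefore reduces to proving that $h$ has a unique critical point in the open first quadrant and that this point realises $\max_{[0,\infty)^2} h$.

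For existence, I would use that $\lambda<0$ and $q<6$ force the $s^{10}$, $t^{10}$, $s^5 t^5$ and $s^6$, $t^6$ terms to dominate at infinity, so $h(s,t)\to -\infty$ uniformly as $|(s,t)|\to\infty$. Near the origin, $h(s,t) = \tfrac{s^2}{2}\|u^+\|^2 + \tfrac{t^2}{2}\|u^-\|^2 + o(s^2 + t^2) > 0$; and on either coordinate axis a second-order Taylor expansion (using $\|u^\pm\|^2 > 0$) shows that $h$ strictly increases upon moving into the open quadrant. Hence $h$ attains its positive maximum on $[0,\infty)^2$, necessarily in the interior, and this maximum is a critical point, producing a candidate $(s_u, t_u)$ with $s_u u^+ + t_u u^- \in \mathcal{M}_\lambda$.

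For uniqueness, set
\begin{equation*}
g_1(s,t) := s^{-1}\partial_s h(s,t) = \|u^+\|^2 + \lambda s^8 B_+ + \lambda s^3 t^5 D - s^4|u^+|_6^6 - s^{q-2}|u^+|_q^q,
\end{equation*}
and $g_2$ the symmetric function. Because $\lambda<0$ and $q>2$, every entry of the Jacobian of $(g_1,g_2)$ is strictly negative on $(0,\infty)^2$; retaining only the $B_\pm$- and $D$-products in $\partial_s g_1\,\partial_t g_2$ and $\partial_t g_1\,\partial_s g_2$ yields
\begin{equation*}
\det J(s,t) \geq \lambda^2 s^7 t^7\bigl(64 B_+ B_- - 25 D^2\bigr).
\end{equation*}
The Cauchy--Schwarz inequality for the positive-definite Riesz-kernel bilinear form $(f,g)\mapsto \iint f(x)g(y)|x-y|^{-1}\mathrm{d}x\,\mathrm{d}y$ gives $D^2 \leq B_+ B_-$, strict because $|u^+|^5$ and $|u^-|^5$ have disjoint supports and so cannot be proportional. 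Therefore $\det J > 0$ throughout $(0,\infty)^2$, and combined with the boundary behaviour $(g_1,g_2) \to (\|u^+\|^2, \|u^-\|^2)$ at $(0,0)$ and $(g_1,g_2) \to (-\infty,-\infty)$ at infinity, a standard Brouwer-degree argument on a large subdomain of $(0,\infty)^2$ forces the zero to be unique. I expect uniqueness to be the main obstacle: a naive subtraction of the equations only yields the consistent relations $s_1<s_2$ and $t_1>t_2$ without contradiction, and only the above Jacobian positivity (through the Riesz-kernel Cauchy--Schwarz) rules out a spurious second critical pair.
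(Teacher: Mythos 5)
Your overall strategy (reduce to a two-variable optimization of $h(s,t)=\mathcal{I}_\lambda(su^++tu^-)$, show $h\to -\infty$ at infinity, exclude the boundary, and then count critical points) mirrors the paper's, but your uniqueness argument takes a genuinely different and, in my view, more defensible route.

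The paper disposes of uniqueness by asserting that $\Phi(s,t)$ is ``strictly concave,'' so any critical point is the unique maximum. As stated this is not literally true: near $(0,0)$ the function $\Phi$ is a positive-definite quadratic plus higher order, hence strictly convex; and after the usual substitution $\sigma=s^2$, $\tau=t^2$ the $\lambda$-cross term becomes $\tfrac{\lambda}{5}D\,(\sigma\tau)^{5/2}$, whose Hessian is indefinite, so that change of variables does not restore concavity either. You sidestep this entirely: writing $g_1=s^{-1}\partial_s h$, $g_2=t^{-1}\partial_t h$, you observe that all four entries of the Jacobian of $(g_1,g_2)$ are negative, retain only the $B_\pm$- and $D$-products, and bound
$\det J\ge \lambda^2 s^7 t^7\bigl(64B_+B_--25D^2\bigr)$; the Cauchy--Schwarz inequality for the positive-definite Coulomb bilinear form gives $D^2<B_+B_-$ (strict because $|u^+|^5$ and $|u^-|^5$ have disjoint supports), whence $\det J>0$ throughout $(0,\infty)^2$. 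That inequality is correct and is a genuinely new ingredient relative to the paper. You then invoke degree theory to convert ``every zero has index $+1$'' plus the boundary behaviour into uniqueness; this is the right mechanism, but as you present it the degree itself is not computed. To close that gap one should pick a rectangle $[\varepsilon,R]^2$ on which $g_1>0$ on $\{s=\varepsilon\}$, $g_2>0$ on $\{t=\varepsilon\}$, $g_1<0$ on $\{s=R\}$, $g_2<0$ on $\{t=R\}$, and homotope $\lambda\mapsto \mu\lambda$, $\mu\in[0,1]$: since $\mu\lambda\ge\lambda$ and the $\lambda$-terms enter with non-negative coefficients, the boundary inequalities persist along the homotopy, and at $\mu=0$ the system decouples into two scalar strictly decreasing equations with one zero each, giving degree $1$. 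With that detail filled in, your argument is complete and arguably patches the concavity step in the paper's own proof. (Minor typo: ``$s^5(u^+)^5$'' in the pointwise identity should read ``$s^5|u^+|^5$''.)
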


\begin{proof} %
Firstly, we prove the existence of $(s_u,t_u)$. Define the
function $\Phi:[0,\infty)\times[0,\infty)\to \mathbb{R}$ by
\begin{align*}
\Phi(s,t)
:&=\mathcal{I}_\lambda(su^++tu^-)
\\&=\frac{s^2}{2}\|u^+\|^2
+\frac{t^2}{2}\|u^-\|^2
+\frac{\lambda s^{10}}{10}
\int_{\mathbb{R}^{3}}\phi_{u^+}|u^+|^{5}\mathrm{d}x
+\frac{\lambda t^{10}}{10}
\int_{\mathbb{R}^{3}}\phi_{u^-}|u^-|^{5}\mathrm{d}x
\\&\quad+\frac{\lambda
s^{5}t^{5}}{10}\int_{\mathbb{R}^{3}}\phi_{u^-}
|u^+|^{5}\mathrm{d}x
+\frac{\lambda s^{5}t^{5}}{10}\int_{\mathbb{R}^{3}}\phi_{u^+}
|u^-|^{5}\mathrm{d}x
-\frac{s^{6}}{6}\int_{\mathbb{R}^{3}}|u^+|^{6}\mathrm{d}x
\\&\quad-\frac{t^{6}}{6}\int_{\mathbb{R}^{3}}|u^-|^{6}\mathrm{d}x
-\frac{s^{q}}{q}\int_{\mathbb{R}^{3}}|u^+|^{q}\mathrm{d}x
-\frac{t^{q}}{q}\int_{\mathbb{R}^{3}}|u^-|^{q}\mathrm{d}x,
\end{align*}
where $u^\pm\neq 0$. Through a simple calculation, we obtain that
$su^++tu^-\in \mathcal{M}_\lambda$ is equivalent to
$\left(\frac{\partial\Phi(s,t)}{\partial
s},\frac{\partial\Phi(s,t)}{\partial t}\right)=(0,0)$ with $s,t>
0$.

Secondly, we prove the uniqueness of the pair $(s_u,t_u)$
obtained above. By the definition of $\Phi$,
\begin{align*}
\Phi(s,t)
&\le \frac{s^2}{2}\|u^+\|^2
+\frac{t^2}{2}\|u^-\|^2
-\frac{s^{6}}{6}\int_{\mathbb{R}^{3}}|u^+|^{6}\mathrm{d}x
-\frac{t^{6}}{6}\int_{\mathbb{R}^{3}}|u^-|^{6}\mathrm{d}x,
\end{align*}
from which we obtain that
\begin{align*}
\lim\limits_{s^2+t^2\to \infty}\Phi(s,t)
&\le \lim\limits_{s^2+t^2\to \infty}
\left(\frac{s^2}{2}\|u^+\|^2
+\frac{t^2}{2}\|u^-\|^2
-\frac{s^{6}}{6}\int_{\mathbb{R}^{3}}|u^+|^{6}\mathrm{d}x
-\frac{t^{6}}{6}\int_{\mathbb{R}^{3}}|u^-|^{6}\mathrm{d}x
\right)
=-\infty.
\end{align*}
Thus, $\Phi(s,t)$ has at least one global maximum point on
$[0,\infty)\times[0,\infty)$. On the other hand, since the
function $\Phi$ is strictly concave, then any critical point is a
maximum point and there exists at most one maximum point.
Thereby, there exists a unique maximum point for the function
$\Phi$ on $[0,\infty)\times[0,\infty)$.

Finally, it is sufficient to verify that the maximum point cannot
be achieved on the boundary of $[0,\infty)\times[0,\infty)$.
Suppose by contradiction that $(\bar{s},0)$ is a maximum point of
$\Phi$ with $\bar{s}\ge 0$. If $t>0$ small enough, it holds that
\begin{align*}
\frac{\partial\Phi(\bar{s},t)}{\partial t}
&=t\|u^-\|^2
+\lambda{t^{9}}\int_{\mathbb{R}^{3}}\phi_{u^-}
|u^-|^{5}\mathrm{d}x
+\frac{\lambda\overline{s}^{5}t^{4}}{2}
\int_{\mathbb{R}^{3}}\phi_{u^-} |u^+|^{5}\mathrm{d}x
+\frac{\lambda\overline{s}^{5}t^{4}}{2}
\int_{\mathbb{R}^{3}}\phi_{u^+} |u^-|^{5}\mathrm{d}x
\\&\quad
-{t^{5}}\int_{\mathbb{R}^{3}}|u^-|^{6}\mathrm{d}x
-{t^{q-1}}\int_{\mathbb{R}^{3}}|u^-|^{q}\mathrm{d}x
>0,
\end{align*}
that is, $\Phi(\bar{s},t)$ is an increasing function with respect
to $t$ if $t>0$ small enough, which yields to a contradiction.
Similarly, $\Phi(s,t)$ cannot achieve its global maximum on
$(0,\bar{t})$. So, the proof is completed.
\end{proof}

Let $(s_u,t_u)\in(0,\infty)\times (0,\infty)$ be the unique pair
obtained by Lemma \ref{lam-16}. In the following, we will study
some further properties of $(s_u,t_u)$.

\begin{lem}\label{lam-126}
For any $u\in H_r^1(\mathbb{R}^3)$ with $u^\pm \neq 0$, there
hold

  (1) the functionals $s,t$ are continuous in
  $H_r^1(\mathbb{R}^3)$;

  (2) if $u_n^+\rightarrow0$ in $H_r^1(\mathbb{R}^3)$ and
  $u_n^-\to u^-\neq 0$ in $H_r^1(\mathbb{R}^3)$ as
  $n\rightarrow\infty$, one obtains that
  $s_{u_n}\rightarrow\infty$;
  if $u_n^-\rightarrow0$ in $H_r^1(\mathbb{R}^3)$ and $u_n^+\to
  u^+\neq 0$ in $H_r^1(\mathbb{R}^3)$ as $n\rightarrow\infty$,
  one has that $t_{u_n}\rightarrow\infty$;

  (3) if $\{u_n\}\subset\mathcal{M}_\lambda$,
  $\lim_{n\rightarrow\infty}\mathcal{I}_\lambda(u_n)=m_\lambda$,
  then $m_\lambda>0$, $\Lambda_2\le\|u_n^\pm\|\le \Lambda_1$
  for some $\Lambda_{1},\Lambda_{2}>0$.
\end{lem}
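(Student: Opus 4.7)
The plan is to dispatch the three assertions in the order (3), (2), (1), since each successive claim uses the uniform Nehari bounds supplied by the previous one.

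For (3), I would first derive the energy identity
\begin{equation*}
\mathcal{I}_\lambda(u)=\Bigl(\tfrac12-\tfrac1q\Bigr)\|u\|^2+\lambda\cdot\tfrac{q-10}{10q}\int_{\mathbb{R}^3}\phi_u|u|^5\,\mathrm{d}x+\Bigl(\tfrac1q-\tfrac16\Bigr)|u|_6^6,\qquad u\in\mathcal{M}_\lambda,
\end{equation*}
obtained by subtracting $\tfrac{1}{q}\langle\mathcal{I}_\lambda'(u),u\rangle=0$ from $\mathcal{I}_\lambda(u)$. For $q\in(5,6)$ and $\lambda<0$, all three coefficients are strictly positive, so $\mathcal{I}_\lambda(u_n)\to m_\lambda$ immediately yields the upper bound $\|u_n\|\le\Lambda_1$, hence $\|u_n^{\pm}\|\le\Lambda_1$. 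For the lower bound I would use the Nehari identity $\langle\mathcal{I}_\lambda'(u_n),u_n^+\rangle=0$, which together with $\lambda<0$, Sobolev's embedding, Proposition~\ref{prop-1}(3), and Hardy--Littlewood--Sobolev estimation of the cross term $\int\phi_{u_n^-}|u_n^+|^5\,\mathrm{d}x$ yields
\begin{equation*}
1\le C\bigl(\|u_n^+\|^4+\|u_n^+\|^{q-2}+|\lambda|\|u_n^+\|^{8}+|\lambda|\|u_n^-\|^5\|u_n^+\|^3\bigr).
\end{equation*}
Combined with $\|u_n^-\|\le\Lambda_1$, this rules out $\|u_n^+\|\to 0$ and gives $\|u_n^+\|\ge\Lambda_2$; the argument for $u_n^-$ is symmetric. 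The identity then yields $m_\lambda\ge(\tfrac{1}{2}-\tfrac{1}{q})\cdot 2\Lambda_2^2>0$.

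For (2), set $v_n:=s_{u_n}u_n^++t_{u_n}u_n^-\in\mathcal{M}_\lambda$ and assume, toward contradiction, that $s_{u_n}$ is bounded along a subsequence. The key preliminary step is to bound $t_{u_n}$: the Nehari identity for $v_n^-$, with $\lambda<0$, implies $|v_n^-|_q^q\le\|v_n^-\|^2$, i.e.\ $t_{u_n}^{q-2}|u_n^-|_q^q\le\|u_n^-\|^2$; since $u_n^-\to u^-\neq 0$ forces $|u_n^-|_q$ away from zero and $\|u_n^-\|$ bounded, we conclude $t_{u_n}\le C$. Then $v_n^+=s_{u_n}u_n^+\to 0$ in $H^1_r$ while $\|v_n^-\|=t_{u_n}\|u_n^-\|$ stays bounded, and the lower-bound argument from part (3) applied to $v_n\in\mathcal{M}_\lambda$ gives $\|v_n^+\|\ge c>0$, contradicting $\|v_n^+\|\to 0$. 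The second half of the statement follows by symmetry.

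For (1), let $u_n\to u$ in $H^1_r$ with $u^\pm\neq 0$. The Nehari inequalities $s_{u_n}^{q-2}|u_n^+|_q^q\le\|u_n^+\|^2$ and $t_{u_n}^{q-2}|u_n^-|_q^q\le\|u_n^-\|^2$ — valid because $\lambda<0$ — together with $u_n^{\pm}\to u^{\pm}\neq 0$, give uniform upper bounds on $(s_{u_n},t_{u_n})$. Thus $\|v_n^{\pm}\|$ are uniformly bounded, and the machinery of part (3) produces $\|v_n^{\pm}\|\ge c$, hence $s_{u_n},t_{u_n}\ge c/\|u_n^{\pm}\|$ are bounded away from $0$. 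Along a subsequence $(s_{u_n},t_{u_n})\to(s_0,t_0)\in(0,\infty)^2$, and using Proposition~\ref{prop-1}(5) to pass to the limit in the nonlocal terms, the two Nehari equations for $v_n$ pass to the limit and yield $s_0 u^++t_0 u^-\in\mathcal{M}_\lambda$. Uniqueness in Lemma~\ref{lam-16} forces $(s_0,t_0)=(s_u,t_u)$, and since every subsequential limit agrees, the full sequence converges. The main obstacle throughout is the mixed nonlocal term $|\lambda|\int\phi_{u^-}|u^+|^5\,\mathrm{d}x$: controlling it requires simultaneous a priori $H^1$-bounds on $u^+$ and $u^-$, which is precisely why part (3) must be established first and then fed into (2) and (1).
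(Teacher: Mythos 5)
Your proof is correct and uses essentially the same tools as the paper: the Nehari identities with $\lambda<0$ to drop the nonlocal terms, the identity $\mathcal{I}_\lambda(u)-\frac{1}{q}\langle\mathcal{I}_\lambda'(u),u\rangle$ for the $H^1$ bound, Sobolev and Hardy--Littlewood--Sobolev estimates for the lower bound on $\|u_n^\pm\|$, and uniqueness from Lemma~\ref{lam-16} for the continuity of $(s_u,t_u)$. The only genuine difference is organizational: you prove (3) first and reuse its uniform Nehari bounds in (2) and (1), whereas the paper proves (1), (2), (3) in order and derives the needed estimates inline each time; your ordering has the small advantage of explicitly pinning $(s_0,t_0)\in(0,\infty)^2$ before invoking uniqueness in part (1), a point the paper leaves implicit.
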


\begin{proof}
($1$) Take a sequence $\{u_n\}\subset H_r^{1}(\mathbb{R}^3)$ such
that $u_n\rightarrow u$ in $H_r^{1}(\mathbb{R}^3)$, then we get
$u_n^\pm\rightarrow u^\pm$ in $H_r^{1}(\mathbb{R}^3)$. Using
Lemma \ref{lam-16}, there exist $(s_{u_n},t_{u_n})$, $(s_u,t_u)$
such that $s_{u_n}u_n^++t_{u_n}u_n^-\in\mathcal{M}_\lambda,\
s_{u}u^++t_{u}u^-\in\mathcal{M}_\lambda$. By the definition of
$\mathcal{M}_\lambda$ and $\lambda<0$, we see that
\begin{align}
{s_{u_n}^2}\|u_n^+\|^2
&\ge{s_{u_n}^2}\|u_n^+\|^2
+\lambda s_{u_n}^{10}
\int_{\mathbb{R}^{3}}\phi_{u_n^+}|u_n^+|^{5}\mathrm{d}x
+\lambda{s_{u_n}^{5}}{t_{u_n}^{5}}
\int_{\mathbb{R}^{3}}\phi_{u_n^-}|u_n^+|^{5}\mathrm{d}x
\nonumber\\&=s_{u_n}^{6}\int_{\mathbb{R}^{3}}|u_n^+|^{6}\mathrm{d}x
+s_{u_n}^{q}\int_{\mathbb{R}^{3}}|u_n^+|^{q}\mathrm{d}x,\label{2.266}\\
{t_{u_n}^2}\|u_n^-\|^2
&\ge{t_{u_n}^2}\|u_n^-\|^2
+\lambda t_{u_n}^{10}
\int_{\mathbb{R}^{3}}\phi_{u_n^-}|u_n^-|^{5}\mathrm{d}x
+\lambda {s_{u_n}^{5}}{t_{u_n}^{5}}
\int_{\mathbb{R}^{3}}\phi_{u_n^+}|u_n^-|^{5}\mathrm{d}x
\nonumber\\&=t_{u_n}^{6}\int_{\mathbb{R}^{3}}|u_n^-|^{6}\mathrm{d}x
+t_{u_n}^{q}\int_{\mathbb{R}^{3}}|u_n^-|^{q}\mathrm{d}x.\label{2.276}
\end{align}
We claim that $\{s_{u_n}\}$ and $\{t_{u_n}\}$ are bounded in
$\mathbb{R}^+$. It yields from \eqref{2.266} that
\begin{align*}
s_{u_n}^{6}\int_{\mathbb{R}^{3}}|u_n^+|^{6}\mathrm{d}x\le
{s_{u_n}^2}\|u_n^+\|^2,
\end{align*}
which implies that $\{s_{u_n}\}$ is bounded in $\mathbb{R}^+$.
Analogously, $\{t_{u_n}\}$ is bounded in $\mathbb{R}^+$.
Therefore, up to a subsequence if necessary, still denoted by
$\{s_{u_n}\}$ and $\{t_{u_n}\}$, there exists a pair of
nonnegative numbers $(s_0,t_0)$ such that
\begin{equation*}
\lim\limits_{n\to\infty}s_{u_n}= s_0 \quad \mathrm{and} \quad
\lim\limits_{n\to\infty}t_{u_n}= t_0.
\end{equation*}
Passing to the limit in \eqref{2.266} and \eqref{2.276}, we
deduce that
\begin{align*}
{s_{0}^2}\|u^+\|^2
+\lambda
s_{0}^{10}\int_{\mathbb{R}^{3}}\phi_{u^+}|u^+|^{5}\mathrm{d}x
+\lambda{s_{0}^{5}}{t_{0}^{5}}
\int_{\mathbb{R}^{3}}\phi_{u^-}|u^+|^{5}\mathrm{d}x
&=s_{0}^{6}\int_{\mathbb{R}^{3}}|u^+|^{6}\mathrm{d}x
+s_{0}^{q}\int_{\mathbb{R}^{3}}|u^+|^{q}\mathrm{d}x,\\
{t_{0}^2}\|u^-\|^2
+\lambda
t_{0}^{10}\int_{\mathbb{R}^{3}}\phi_{u^-}|u^-|^{5}\mathrm{d}x
+\lambda{s_{0}^{5}}{t_{0}^{5}}
\int_{\mathbb{R}^{3}}\phi_{u^+}|u^-|^{5}\mathrm{d}x
&=t_{0}^{6}\int_{\mathbb{R}^{3}}|u^-|^{6}\mathrm{d}x
+t_{0}^{q}\int_{\mathbb{R}^{3}}|u^-|^{q}\mathrm{d}x,
\end{align*}
which indicates that $s_0u^++t_0u^-\in\mathcal{M}_\lambda$.
According to the uniqueness of $(s_u,t_u)$, we conclude that
$s_u=s_0$ and $t_u=t_0$.

($2$) Here we only need to show that $s_{u_n}\rightarrow\infty$
if $u_n^+\rightarrow0$ in $H_r^{1}(\mathbb{R}^3)$ and $u_n^-\to
u^-\neq 0$ in $H_r^{1}(\mathbb{R}^3)$. Analogously,
$t_{u_n}\rightarrow\infty$ if $u_n^-\rightarrow0$ in
$H_r^{1}(\mathbb{R}^3)$ and $u_n^+\to u^+\neq 0$ in
$H_r^{1}(\mathbb{R}^3)$. In fact, by contradiction, if
$u_n^+\rightarrow0$ in $H_r^{1}(\mathbb{R}^3)$ and $u_n^-\to
u^-\neq 0$ in $H_r^{1}(\mathbb{R}^3)$, there exists a constant
$M>0$ such that $s_{u_n}\le M$. By the Sobolev inequality,
Proposition \ref{prop-1}($3$) and $q\in(2,6)$, it gives that
\begin{align}
s_{u_n}^{4}\int_{\mathbb{R}^{3}}|u_n^+|^{6}\mathrm{d}x
&\le C_1 \|u_n^+\|^{6}=o(\|u_n^+\|^{2}),\label{3.16}\\
s_{u_n}^{q-2}\int_{\mathbb{R}^{3}}|u_n^+|^{q}\mathrm{d}x
&\le C_2 \|u_n^+\|^{q}=o(\|u_n^+\|^{2}),\label{3.46}\\
s_{u_n}^{8}\int_{\mathbb{R}^{3}}\phi_{u_n^+}|u_n^+|^{5}\mathrm{d}x
&\le C_3 \|u_n^+\|^{10}=o(\|u_n^+\|^{2}).\label{3.26}
\end{align}
By \eqref{2.7} and $\lambda<0$, it holds that
$$0=\frac{\langle
\mathcal{I}_\lambda'(s_{u_n}u_n^++t_{u_n}u_n^-),t_{u_n}u_n^-\rangle}
{t_{u_n}^2}
\le \|u_n^-\|^{2}
-t_{u_n}^{4}\int_{\mathbb{R}^{3}}|u_n^-|^{6}\mathrm{d}x,$$
which implies that $\{t_{u_n}\}$ is bounded in $\mathbb{R}^+$.
Then, by $u_n^-\rightarrow u^-\neq 0$ in $H_r^{1}(\mathbb{R}^3)$,
it yields that
\begin{align}\label{3.36}
s_{u_n}^{3}t_{u_n}^{5}
\int_{\mathbb{R}^{3}}\phi_{u_n^-}|u_n^+|^{5}\mathrm{d}x
&\le C_4 \|u_n^+\|^{5}\|u_n^-\|^{5}=o(\|u_n^+\|^{2}).
\end{align}
From \eqref{3.16}$-$\eqref{3.36}, we obtain that
\begin{align*}
0=\frac{\langle
\mathcal{I}_\lambda'(s_{u_n}u_n^++t_{u_n}u_n^-),s_{u_n}u_n^+\rangle}
{s_{u_n}^2}
&=\|u_n^+\|^{2}
+\lambda
s_{u_n}^{8}\int_{\mathbb{R}^{3}}\phi_{u_n^+}|u_n^+|^{5}\mathrm{d}x
+\lambda s_{u_n}^{3}t_{u_n}^{5}
\int_{\mathbb{R}^{3}}\phi_{u_n^-}|u_n^+|^{5}\mathrm{d}x
\\&\quad -s_{u_n}^{4}
\int_{\mathbb{R}^{3}}|u_n^+|^{6}\mathrm{d}x
-s_{u_n}^{q-2}\int_{\mathbb{R}^{3}}|u_n^+|^{q}\mathrm{d}x
\\&\ge \|u_n^+\|^{2}-o(\|u_n^+\|^{2})>0,
\end{align*}
which is a contradiction. Thus, if $u_n^+\rightarrow0$ in
$H_r^{1}(\mathbb{R}^3)$ and $u_n^-\to u^-\neq 0$ in
$H_r^{1}(\mathbb{R}^3)$, $s_{u_n}\rightarrow\infty$.

($3$) One hand, for any $\{{u}_n\}\subset
\mathcal{M}_\lambda\subset\mathcal{N}_\lambda$, it holds that
 $\langle \mathcal{I}_\lambda'({u}_n),{u}_n\rangle=0$, and thus
\begin{align*}
m_\lambda+o(1)
&=\mathcal{I}_\lambda({{u}_n})
=\mathcal{I}_\lambda({{u}_n})
-\frac{1}{q}\langle \mathcal{I}_\lambda '({{u}_n}),{{u}_n}\rangle
\\&=\left(\frac{1}{2}-\frac{1}{q}\right)\|{{u}_n}\|^2
+\left(\frac{\lambda}{10}-\frac{\lambda}{q}\right)
\int_{\mathbb{R}^{3}}\phi_{u_n}|{u}_n|^{5}\mathrm{d}x
+\left(\frac{1}{q}-\frac{1}{6}\right)
\int_{\mathbb{R}^{3}}|{u}_n|^{6}\mathrm{d}x
\\&\ge\left(\frac{1}{2}-\frac{1}{q}\right)\|{{u}_n}\|^2
\ge \left(\frac{1}{2}-\frac{1}{q}\right)\|u_n^\pm\|^2>0,
\end{align*}
which means that $m_\lambda>0$ and there exists $\Lambda_1>0$
such that $\|u_n^\pm\|\le \Lambda_1$. On the other hand, by
$\{{u}_n\}\subset{\mathcal{M}}_\lambda$ and the Sobolev
inequality, one derives from $\|u_n^-\|\le \Lambda_1$ that
\begin{align*}
0<\|u_n^+\|^2
&=\int_{\mathbb{R}^{3}}|u_n^+|^{6}\mathrm{d}x
+\int_{\mathbb{R}^{3}}|u_n^+|^{q}\mathrm{d}x
-\lambda\int_{\mathbb{R}^{3}}\phi_{u_n^+}|u_n^+|^{5}\mathrm{d}x
-\lambda\int_{\mathbb{R}^{3}}\phi_{u_n^-}|u_n^+|^{5}\mathrm{d}x
\\&\le C_1\|u_n^+\|^{6}
+C_2\|u_n^+\|^{q}
+C_3\|u_n^+\|^{10}
+C_4\|u_n^-\|^{5}\|u_n^+\|^{5}
\\&\le
C_1\|u_n^+\|^{6}
+C_2\|u_n^+\|^{q}
+C_3\|u_n^+\|^{10}
+C_5\|u_n^+\|^{5}.
\end{align*}
Thus, there exists $\Lambda_2>0$ such that $\|u_n^+\|\ge \Lambda_2$. Similarly, we also infer that $\|u_n^-\|\ge \Lambda_2$. Hence, $\Lambda_2\le\|{u}_n^\pm\|\le \Lambda_1$ for some $\Lambda_1,\Lambda_2>0$. This completes
the proof.
\end{proof}

Inspired by \cite{KJB03}, the following results hold. Since the
proof is standard, we omit it here.

\begin{lem}\label{lam-64}
Suppose that $\lambda<0$ and $q\in(2,6)$ hold. It holds that

(1) for any $u\in H_r^1(\mathbb{R}^3)\setminus\{0\}$, there
exists a unique $\bar{s}_u>0$ such that
$\bar{s}_uu\in\mathcal{N}_\lambda $. Moreover,
$$\mathcal{I}_\lambda ({\bar{s}_uu})
=\max\limits_{s\ge0}\,\mathcal{I} _\lambda({s u});$$

(2) system \eqref{1.6} possesses a positive least energy solution
$v_0\in\mathcal{N}_\lambda$ such that $\mathcal{I}_\lambda
(v_0)=c_{\lambda}$.
\end{lem}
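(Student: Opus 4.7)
The plan is to handle the two parts separately: part (1) by direct analysis of the fiber map $s\mapsto\mathcal{I}_\lambda(su)$, and part (2) by the standard Nehari-minimization scheme, flagging where the doubly critical exponents require extra care.

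For part (1), fix $u\in H_r^1(\mathbb{R}^3)\setminus\{0\}$ and consider $g(s):=\mathcal{I}_\lambda(su)$ on $[0,\infty)$. Using Proposition \ref{prop-1}(2), $g(s)=\tfrac{s^2}{2}\|u\|^2+\tfrac{\lambda s^{10}}{10}\int\phi_u|u|^5-\tfrac{s^6}{6}\int|u|^6-\tfrac{s^q}{q}\int|u|^q$, so $g'(s)=s\,\psi_u(s)$ with
\[
\psi_u(s):=\|u\|^2+\lambda s^{8}\int_{\mathbb{R}^3}\phi_u|u|^5\mathrm{d}x-s^{4}\int_{\mathbb{R}^3}|u|^6\mathrm{d}x-s^{q-2}\int_{\mathbb{R}^3}|u|^q\mathrm{d}x.
\]
Then $\bar{s}u\in\mathcal{N}_\lambda$ with $\bar{s}>0$ is equivalent to $\psi_u(\bar{s})=0$. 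Since $\psi_u(0)=\|u\|^2>0$ while $\psi_u(s)\to-\infty$ as $s\to\infty$ (the $s^{8}$ coefficient is negative because $\lambda<0$), at least one zero exists. For uniqueness I would differentiate: $\psi_u'(s)=8\lambda s^{7}\int\phi_u|u|^5-4s^{3}\int|u|^6-(q-2)s^{q-3}\int|u|^q$; each term is strictly negative on $(0,\infty)$ whenever $\lambda<0$ and $q>2$, so $\psi_u$ is strictly decreasing and $\bar{s}_u$ is unique. The same sign information gives $g'>0$ on $(0,\bar{s}_u)$ and $g'<0$ on $(\bar{s}_u,\infty)$, yielding the maximum characterization.

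For part (2), I would take a minimizing sequence $\{u_n\}\subset\mathcal{N}_\lambda$ with $\mathcal{I}_\lambda(u_n)\to c_\lambda$. Using part (1) I can replace $u_n$ by $\bar{s}_{|u_n|}|u_n|$ and assume $u_n\geq0$. The identity
\[
\mathcal{I}_\lambda(u_n)-\tfrac{1}{q}\langle\mathcal{I}_\lambda'(u_n),u_n\rangle=\bigl(\tfrac{1}{2}-\tfrac{1}{q}\bigr)\|u_n\|^2+\lambda\bigl(\tfrac{1}{10}-\tfrac{1}{q}\bigr)\int\phi_{u_n}|u_n|^5+\bigl(\tfrac{1}{q}-\tfrac{1}{6}\bigr)\int|u_n|^6
\]
has all three coefficients positive when $\lambda<0$ and $q\in(2,6)$, giving a uniform bound $\|u_n\|\leq C$ and, as a by-product, $c_\lambda>0$. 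Passing to a subsequence, $u_n\rightharpoonup v_0$ in $H_r^1(\mathbb{R}^3)$, and by Proposition \ref{lam-32} the convergence is strong in $L^q(\mathbb{R}^3)$. Applying Ekeland's principle on $\mathcal{N}_\lambda$ (a genuine $\mathcal{C}^1$-manifold because $\psi_u'(\bar{s}_u)<0$), I may upgrade $\{u_n\}$ to a $(\mathrm{PS})_{c_\lambda}$ sequence for $\mathcal{I}_\lambda$.

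The main obstacle is showing $v_0\not\equiv0$: neither $\int|u_n|^6$ nor $\int\phi_{u_n}|u_n|^5$ is weakly sequentially continuous on $H_r^1(\mathbb{R}^3)$, so vanishing cannot be excluded by compactness alone. I would resolve this by a test-function argument, plugging a cutoff of the Aubin--Talenti instanton $\xi_\varepsilon$ from \eqref{2.4} into the fiber map and exploiting the subcritical term $-\tfrac{s^q}{q}\int|\xi_\varepsilon|^q$ with $q>2$ to push $\max_{s>0}\mathcal{I}_\lambda(s\xi_\varepsilon)$ strictly below the standard compactness threshold $\tfrac{1}{3}S^{3/2}$ for small $\varepsilon>0$; because $\lambda<0$, the nonlocal contribution is of favorable sign and does not obstruct this estimate. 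Below this threshold, a Brezis--Lieb decomposition combined with Proposition \ref{prop-1}(5) forces $v_0\not\equiv0$. Once this is secured, weak lower semicontinuity, the $L^q$-compactness, and passing to the limit in $\langle\mathcal{I}_\lambda'(u_n),\varphi\rangle\to0$ show $\mathcal{I}_\lambda'(v_0)=0$, $v_0\in\mathcal{N}_\lambda$, and $\mathcal{I}_\lambda(v_0)\leq c_\lambda$, hence equality. Finally, rewriting the equation as $-\Delta v_0+v_0=v_0^5+v_0^{q-1}-\lambda\phi_{v_0}v_0^{4}$ with nonnegative right-hand side (since $\lambda<0$ and $\phi_{v_0}\geq0$) and applying the strong maximum principle upgrades $v_0\geq0$ to $v_0>0$.
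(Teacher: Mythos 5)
The paper omits the proof of this lemma (it cites \cite{KJB03} and declares the argument standard), so there is no internal proof to compare against; I will assess your proposal on its own terms. Your part (1) is correct: the map $\psi_u$ is strictly decreasing because each of the three terms in $\psi_u'$ has a definite negative sign under $\lambda<0$ and $q>2$, which gives a unique zero and the maximum characterization. The Ekeland step, the passage to the weak limit, and the strong-maximum-principle upgrade to $v_0>0$ in part (2) are also fine.

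However, part (2) has a genuine gap in the compactness threshold. You assert that vanishing is excluded once $c_\lambda<\tfrac13 S^{3/2}$, but for this problem with $\lambda<0$ the correct threshold is strictly smaller. In the Br\'{e}zis--Lieb decomposition, the remainder $v_n$ satisfies $\|v_n\|^2+\lambda\int\phi_{v_n}|v_n|^5-\int|v_n|^6\to 0$, and since $\lambda\int\phi_{v_n}|v_n|^5\le 0$, the Sobolev/HLS analysis (carried out in the paper's Lemma \ref{lam-116}) shows that a nonvanishing remainder need only satisfy $\lim\|v_n\|^2\ge\bigl(\tfrac{1-\sqrt{1-4\lambda}}{2\lambda}\bigr)^{1/2}S^{3/2}$, which is strictly below $S^{3/2}$, and carries energy at least $c^*_\lambda=\bigl(\tfrac{1-\sqrt{1-4\lambda}}{2\lambda}\bigr)^{1/2}\tfrac{12\lambda-1+\sqrt{1-4\lambda}}{30\lambda}S^{3/2}$, which one checks is strictly less than $\tfrac13 S^{3/2}$ for every $\lambda<0$. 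Thus proving $c_\lambda<\tfrac13 S^{3/2}$ by simply discarding the nonlocal term from the fiber estimate does not rule out vanishing. To close the argument you must instead bound the nonlocal term from below as in Lemma \ref{lam-416} (via $\int\phi_{u_\varepsilon}|u_\varepsilon|^5\ge 2\int|u_\varepsilon|^6-\int|\nabla u_\varepsilon|^2=S^{3/2}+O(\varepsilon^{1/2})$) and apply Lemma \ref{lam-gt6}, obtaining $\max_{s}\mathcal{I}_\lambda(su_\varepsilon)\le c^*_\lambda+O(\varepsilon^{1/2})-C\varepsilon^{(6-q)/4}$; note this is $<c^*_\lambda$ only when $\tfrac{6-q}{4}<\tfrac12$, i.e.\ $q>4$, so the argument does not in fact reach the full range $q\in(2,6)$ the lemma nominally states (the paper only ever applies it with $q\in(5,6)$).
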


\begin{rem}\label{rem-6}
Recall that $v_0$ is a positive least energy solution of system
\eqref{1.6}. We can directly deduce from \cite[Theorem
1.11]{LGB90} that $v_0\in L^\infty(\mathbb{R}^3)$ and $v\in
\mathcal{C}_{\mathrm{loc}}^{1,\alpha}(\mathbb{R}^3)$ for some
$0<\alpha<1$. The boundedness and regularity of $v_0$ play a
crucial role in the proof of Theorem \ref{thm-2}, see Lemma
\ref{lam-86} below.
\end{rem}


\subsection{Constructing the sign-changing
$(\mathrm{PS})_{m_\lambda}$ sequence}\label{13}

\noindent Due to the presence of a critical nonlinearity in
system \eqref{1.6}, our first task is to construct a
sign-changing $(\mathrm{PS})_{m_\lambda}$ sequence. Inspired by
the spirit of \cite{CG36}, we present some definitions firstly.
Let the functional $l_\lambda (u,v)$ defined on
$H_r^1(\mathbb{R}^3)$ by
\begin{equation*}
l_\lambda(u,v):=
\begin{cases}
\frac{
\int_{\mathbb{R}^{3}}|u|^{6}\mathrm{d}x
+\int_{\mathbb{R}^{3}}|u|^{q}\mathrm{d}x
-\lambda\int_{\mathbb{R}^{3}}\phi_{u}|u|^{5}\mathrm{d}x
-\lambda\int_{\mathbb{R}^{3}}\phi_{v}|u|^{5}\mathrm{d}x}
{\|u\|^2
},\ \ &\mathrm{if}\ u\neq0;\\
0,\quad\quad &\mathrm{if}\ u=0.
\end{cases}
\end{equation*}
Obviously, $l_\lambda(u,v)>0$ if $\lambda<0$ and $u\neq 0$. $u\in
\mathcal{M}_\lambda$ if and only if
$l_\lambda(u^+,u^-)=l_\lambda(u^-,u^+)=1$. Next we define
\begin{equation}\label{4.26}
U_\lambda:
=\left\{u\in
H_r^1(\mathbb{R}^3):\frac{1}{2}<l_\lambda(u^+,u^-)<\frac{3}{2},\
\frac{1}{2}<l_\lambda(u^-,u^+)<\frac{3}{2}\right\}.
\end{equation}
Let $P$ be the cone of nonnegative functions in
$H_r^{1}(\mathbb{R}^3)$ and $Q\in[0,1]\times[0,1]$. $\Sigma$
denotes the set which contains continuous maps $\sigma$
satisfying the following two conditions:

\vspace{0.125em}
\vspace*{0.125em}

{\rm(I)} $\sigma(s,0)=0$, $\sigma(0,t)\in P$ and $\sigma(1,t)\in
-P$;

\vspace{0.125em}
\vspace*{0.125em}

{\rm(II)} $(\mathcal{I}_\lambda\circ\sigma)(s,1)\le0$,
$\frac{\int_{\mathbb{R}^{3}}|\sigma(s,1)|^{6}\mathrm{d}x
+\int_{\mathbb{R}^{3}}|\sigma(s,1)|^{q}\mathrm{d}x
-\lambda\int_{\mathbb{R}^{3}}\phi_{\sigma(s,1)}|\sigma(s,1)|^{5}\mathrm{d}x}
{\|\sigma(s,1)\|^2}
\ge 2$,

\vspace{0.25em}
\vspace*{0.125em}

\noindent where $s,t\in[0,1]$. For any $u\in
H_r^{1}(\mathbb{R}^3)$ with $u^\pm\neq0$, taking
$\sigma(s,t)=kt(1-s)u^++kstu^-$, where $k>0$, $s,t\in[0,1]$.
Notice that $\sigma(s,t)\in\Sigma$ for $k>0$ large enough, which
indicates that $\Sigma\neq\emptyset$.

\begin{lem}\label{lam-76}
There exists a sequence $\{u_n\}\subset U_\lambda$ satisfying
$\mathcal{I}_\lambda(u_n)\rightarrow m_\lambda$ and
$\mathcal{I}_\lambda'(u_n)\rightarrow 0$.
\end{lem}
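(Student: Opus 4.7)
The plan is to view $m_\lambda$ as a minimax value over the class $\Sigma$ and then extract a Palais--Smale sequence lying inside the open neighborhood $U_\lambda$ of $\mathcal{M}_\lambda$. Following the scheme of \cite{CG36}, define
\begin{equation*}
c_\lambda := \inf_{\sigma\in\Sigma}\,\max_{(s,t)\in Q}\mathcal{I}_\lambda(\sigma(s,t)),
\end{equation*}
and aim to prove $c_\lambda = m_\lambda$, after which a deformation argument produces the desired sequence.

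First, I would establish $c_\lambda \geq m_\lambda$ by showing that $\sigma(Q)\cap\mathcal{M}_\lambda\neq\emptyset$ for every $\sigma\in\Sigma$. Using $|u|^5 = |u^+|^5 + |u^-|^5$ and the additivity $\phi_u = \phi_{u^+}+\phi_{u^-}$ (from Proposition \ref{prop-1}), a direct computation shows that the numerator appearing in condition (II) equals $l_\lambda(u^+,u^-)\|u^+\|^2+l_\lambda(u^-,u^+)\|u^-\|^2$; hence along $\{t=1\}$ the weighted average of the two $l_\lambda$-values is at least $2$, while the remaining three edges force $l_\lambda(\cdot,\cdot)=0$ in the appropriate slot via (I). Applying a Brouwer-degree / Poincar\'{e}--Miranda argument to the map
\begin{equation*}
F_\sigma(s,t) := \bigl(l_\lambda(\sigma(s,t)^+,\sigma(s,t)^-)-1,\ l_\lambda(\sigma(s,t)^-,\sigma(s,t)^+)-1\bigr),
\end{equation*}
one concludes that $F_\sigma$ must vanish in the interior of $Q$, yielding a point $\sigma(s^\star,t^\star)\in\mathcal{M}_\lambda$ and therefore $\max\mathcal{I}_\lambda\circ\sigma\ge m_\lambda$.

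Next, to get $c_\lambda \le m_\lambda$, fix $u \in \mathcal{M}_\lambda$ and set
\begin{equation*}
\sigma_u(s,t) := R(1-s)t\, u^+ + R s t\, u^-,\qquad (s,t)\in Q,
\end{equation*}
for a parameter $R>0$ to be chosen large. The requirements (I) are immediate. For (II), the critical term $|u|^6$ in the energy dominates as $R\to\infty$ (the nonlocal term is negative since $\lambda<0$), so $\mathcal{I}_\lambda(\sigma_u(s,1))\to-\infty$ uniformly in $s$, and the ratio in (II) grows like $R^4$; hence (II) holds once $R$ is large enough. As $(s,t)$ sweeps $Q$, the pair $(\alpha,\beta)=(R(1-s)t, Rst)$ covers the triangle $T_R:=\{\alpha,\beta\ge 0,\ \alpha+\beta\le R\}$; by Lemma \ref{lam-16} the map $(\alpha,\beta)\mapsto\mathcal{I}_\lambda(\alpha u^++\beta u^-)$ attains its unique global maximum at $(\alpha,\beta)=(1,1)$ with value $\mathcal{I}_\lambda(u)$, and $(1,1)\in T_R$ as soon as $R>2$. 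Hence $\max\mathcal{I}_\lambda\circ\sigma_u=\mathcal{I}_\lambda(u)$, and taking the infimum over $\mathcal{M}_\lambda$ gives $c_\lambda\le m_\lambda$.

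With $c_\lambda = m_\lambda$ in hand I would run a quantitative deformation argument on $U_\lambda$. Suppose, for contradiction, that no $(\mathrm{PS})_{m_\lambda}$ sequence exists in $U_\lambda$; then there are $\varepsilon,\delta>0$ with $\|\mathcal{I}_\lambda'(u)\|\ge\delta$ for all $u\in U_\lambda$ satisfying $|\mathcal{I}_\lambda(u)-m_\lambda|\le\varepsilon$. Pick $\sigma\in\Sigma$ with $\max\mathcal{I}_\lambda\circ\sigma\le m_\lambda+\varepsilon/2$; note that, by Lemma \ref{lam-64} and condition (II), $\mathcal{I}_\lambda\circ\sigma\le c_\lambda$ on the edges $s=0,1$, $\mathcal{I}_\lambda\circ\sigma=0$ on $t=0$ and $\mathcal{I}_\lambda\circ\sigma\le 0$ on $t=1$, while $U_\lambda$ contains no element of $P\cup(-P)$. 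Consequently, after shrinking $\varepsilon$, the image $\sigma(\partial Q)$ lies entirely outside the bad set in which the pseudo-gradient flow acts. The standard flow $\eta_\tau$ (cut off away from $U_\lambda\cap\{|\mathcal{I}_\lambda-m_\lambda|\le\varepsilon\}$) then fixes $\sigma(\partial Q)$, keeps $\eta_1\circ\sigma$ inside $\Sigma$, and strictly lowers the maximum below $m_\lambda$, contradicting $c_\lambda = m_\lambda$.

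The main obstacle is Step~1: the map $F_\sigma$ degenerates on the edge $\{t=0\}$ (where $\sigma\equiv 0$) and its coordinates do not satisfy the naive Poincar\'{e}--Miranda sign hypotheses, so a careful excision near the origin—or an equivalent degree computation on a modified domain—is needed to make the topological intersection rigorous. The secondary subtle point is verifying the implicit separation $c_\lambda<m_\lambda$ used in Step~3 so that the deformation genuinely ignores $\sigma(\partial Q)$; this forces one to combine the Nehari-characterization in Lemma \ref{lam-64} with a slight strengthening of the lower bound in Lemma \ref{lam-126}(3).
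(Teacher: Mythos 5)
Your overall plan matches the paper's: interpret $m_\lambda$ as a minimax value over $\Sigma$ (your Step~1 and Step~2), then extract a $(\mathrm{PS})_{m_\lambda}$ sequence in $U_\lambda$ by a deformation argument (your Step~3). The test curve $\sigma_u(s,t)=R(1-s)t\,u^++Rst\,u^-$ and the intersection $\sigma(Q)\cap\mathcal{M}_\lambda\neq\emptyset$ via Miranda's theorem are exactly the ingredients the paper uses.

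However, the ``main obstacle'' you flag in Step~1 is a genuine gap, and the fix you gesture at is the wrong one. The map $F_\sigma=(l_\lambda(\sigma^+,\sigma^-)-1,\ l_\lambda(\sigma^-,\sigma^+)-1)$ does not satisfy the Poincar\'e--Miranda sign conditions on $\partial Q$: on $s=0$ only the second component has a definite sign ($=-1$), on $s=1$ only the first ($=-1$), and on $t=1$ you only control the sum. No excision or degree computation on a punctured domain is needed; the correct move is a linear change of coordinates in the target. Apply Miranda to
\begin{align*}
G_1(s,t)&:=l_\lambda(\sigma^+,\sigma^-)-l_\lambda(\sigma^-,\sigma^+),\\
G_2(s,t)&:=l_\lambda(\sigma^+,\sigma^-)+l_\lambda(\sigma^-,\sigma^+)-2.
\end{align*}
Then $G_1\ge 0$ on $s=0$, $G_1\le 0$ on $s=1$ (both from condition (I), since one of the two $l_\lambda$-values vanishes on each of these faces), $G_2=-2<0$ on $t=0$, and $G_2\ge 0$ on $t=1$ (from condition (II) together with the elementary inequality $\tfrac{b}{a}+\tfrac{d}{c}\ge\tfrac{b+d}{a+c}$). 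A zero of $(G_1,G_2)$ is a zero of $F_\sigma$, giving the intersection point. Without this step your argument does not close.

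A secondary gap is in Step~3: you assert that the deformed map $\eta_1\circ\sigma$ remains in $\Sigma$, but this requires verification. In particular condition (I)---that the deformed curve still maps the faces $s=0$ and $s=1$ into $\pm P$---is not automatic for a generic pseudo-gradient flow. The paper invokes a variant of the deformation lemma due to Hofer whose flow respects the cone $P$ (properties (a) and (d) in the paper's proof), and this is what guarantees $\tilde\sigma_n\in\Sigma$. Also, your assertion that $\mathcal{I}_\lambda\circ\sigma\le c_\lambda$ on the faces $s\in\{0,1\}$ is neither correct nor needed; what matters is that $\sigma(\partial Q)$ avoids $U_\lambda$ near the level $m_\lambda$ (on $s\in\{0,1\}$ because $\sigma\in\pm P$ while $U_\lambda\cap(\pm P)=\emptyset$, on $t=0$ because $\sigma\equiv 0$, and on $t=1$ because $\mathcal{I}_\lambda\circ\sigma\le 0<m_\lambda$). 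Finally, note a purely notational clash: you redefine $c_\lambda$ to be the minimax value, but the paper reserves $c_\lambda$ for the Nehari ground-state level; mixing the two is what makes the inequality claim on the $s$-faces look meaningful when it is not.
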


\begin{proof}
We will divide the proof into the following three claims.

\textbf{Claim 1}:
$\inf_{\sigma\in\Sigma}\sup_{u\in\sigma(Q)}\mathcal{I}_\lambda(u)
=\inf_{u\in{\mathcal{M}}_\lambda} \mathcal{I}_\lambda(u)
=m_\lambda$.

On the one hand, for any $u\in {\mathcal{M}_\lambda}$, there
exists $\sigma(s,t)=kt(1-s)u^++ktsu^-\in \Sigma$ for $k>0$ large
enough. Thus, it follows from Lemma \ref{lam-16} that
\begin{align*}
\mathcal{I}_\lambda(u)
=\max\limits_{s,t\ge0}\,\mathcal{I}_\lambda(su^++tu^-)
\ge\sup\limits_{u\in\sigma(Q)}\mathcal{I}_\lambda(u)
\ge\inf\limits_{\sigma\in\Sigma}\sup\limits_{u\in\sigma(Q)}
\mathcal{I}_\lambda(u),
\end{align*}
which indicates that
\begin{equation}\label{9.1}
\inf\limits_{u\in{\mathcal{M}_\lambda}}\mathcal{I}_\lambda(u)
\ge\inf\limits_{\sigma\in\Sigma}\sup\limits_{u\in\sigma(Q)}
\mathcal{I}_\lambda(u).
\end{equation}
On the other hand, for each $\sigma\in\Sigma$ and $t\in[0,1]$, we
get that $\sigma(0,t)\in P$ and $\sigma(1,t)\in -P$, thus
\begin{align}
l_\lambda(\sigma^+(0,t),\sigma^-(0,t))
-l_\lambda(\sigma^-(0,t),\sigma^+(0,t))
&=l_\lambda(\sigma^+(0,t),\sigma^-(0,t))
\ge0,\label{4.56}\\
l_\lambda(\sigma^+(1,t),\sigma^-(1,t))
-l_\lambda(\sigma^-(1,t),\sigma^+(1,t))
&=-l_\lambda(\sigma^-(1,t),\sigma^+(1,t))
\le0.\label{4.66}
\end{align}
Meanwhile, from the definition of $\Sigma$, for any
$\sigma\in\Sigma$ and $s\in[0,1]$, we deduce that
\begin{align*}
&l_\lambda(\sigma^+(s,1),\sigma^-(s,1))
+l_\lambda(\sigma^-(s,1),\sigma^+(s,1))
\\&\quad\ge \frac{
\int_{\mathbb{R}^{3}}|\sigma(s,1)|^{6}\mathrm{d}x
+\int_{\mathbb{R}^{3}}|\sigma(s,1)|^{q}\mathrm{d}x
-\lambda\int_{\mathbb{R}^{3}}\phi_{\sigma(s,1)}|\sigma(s,1)|^{5}\mathrm{d}x}
{\|\sigma(s,1)\|^2}
\ge2,
\end{align*}
with the help of  the elementary inequality
$\frac{b}{a}+\frac{d}{c}\ge\frac{b+d}{a+c}$ for any $a,b,c,d>0$.
Thus,
\begin{align}
l_\lambda(\sigma^+(s,1),\sigma^-(s,1))
+l_\lambda(\sigma^-(s,1),\sigma^+(s,1))-2
&\ge0,\label{4.76}\\
l_\lambda(\sigma^+(s,0),\sigma^-(s,0))
+l_\lambda(\sigma^-(s,0),\sigma^+(s,0))-2
&=-2<0.\label{4.86}
\end{align}
Combining Miranda's theorem \cite{MC5} with
\eqref{4.56}$-$\eqref{4.86}, there exists $(s_\sigma,t_\sigma)
\in Q$ such that
\begin{align*}
0
&=l_\lambda(\sigma^+(s_\sigma,t_\sigma),\sigma^-(s_\sigma,t_\sigma))
-l_\lambda(\sigma^-(s_\sigma,t_\sigma),\sigma^+(s_\sigma,t_\sigma))
\\&=l_\lambda(\sigma^+(s_\sigma,t_\sigma),\sigma^-(s_\sigma,t_\sigma))
+l_\lambda(\sigma^-(s_\sigma,t_\sigma),\sigma^+(s_\sigma,t_\sigma))
-2,
\end{align*}
which directly gives that
\begin{equation*}
l_\lambda(\sigma^+(s_\sigma,t_\sigma),\sigma^-(s_\sigma,t_\sigma))
=l_\lambda(\sigma^-(s_\sigma,t_\sigma),\sigma^+(s_\sigma,t_\sigma))
=1.
\end{equation*}
This indicates that for any $\sigma\in\Sigma$, there exists
$u_{\sigma}=\sigma(s_\sigma,t_\sigma)\in{\sigma(Q)}
\cap{\mathcal{M}_\lambda}$, which yields that
\begin{equation*}
\sup\limits_{u\in\sigma(Q)}\mathcal{I}_\lambda(u)
\ge \mathcal{I}_\lambda(u_\sigma)
\ge\inf\limits_{u\in{\mathcal{M}_\lambda}}\mathcal{I}_\lambda(u),
\end{equation*}
that is,
\begin{equation}\label{9.2}
\inf\limits_{\sigma\in\Sigma}\sup\limits_{u\in\sigma(Q)}
\mathcal{I}_\lambda(u)
\ge\inf\limits_{u\in{\mathcal{M}_\lambda}}\mathcal{I}_\lambda(u).
\end{equation}
Hence, combining \eqref{9.1} with \eqref{9.2}, we conclude that
\begin{equation*}
\inf\limits_{\sigma\in\Sigma}\sup\limits_{u\in\sigma(Q)}\mathcal{I}_\lambda(u)
=\inf\limits_{u\in{\mathcal{M}_\lambda}}\mathcal{I}_\lambda(u)
=m_\lambda.
\end{equation*}

\textbf{Claim 2}: There exists a $(\mathrm{PS})_{m_\lambda}$
sequence $\{u_n\}\subset H_r^{1}(\mathbb{R}^3)$ for the
functional $\mathcal{I}_\lambda$.

Consider a minimizing sequence
$\{{w_n}\}\subset{\mathcal{M}_\lambda}$ and
${\sigma}_n(s,t)=kt(1-s)w_n^++ktsw_n^-\in\Sigma$, then
\begin{align*}
\lim\limits_{n\rightarrow\infty}\max\limits_{w\in{\sigma}_n(Q)}\mathcal{I}_\lambda(w)
=\lim\limits_{n\rightarrow\infty}\mathcal{I}_\lambda({w_n}).
\end{align*}
In view of a variant form of the classical deformation lemma
\cite{PHR51} due to Hofer \cite{HH50}, we assert that there
exists $\{u_n\}\subset H_r^{1}(\mathbb{R}^3)$ such that, as
$n\rightarrow\infty$,
\begin{equation}\label{4.96}
\mathcal{I}_\lambda(u_n)\rightarrow m_\lambda,\quad
\mathcal{I}_\lambda'(u_n)\rightarrow0 \ \ \mathrm{and}\ \
\mathrm{dist}(u_n,{\sigma}_n(Q))\rightarrow0.
\end{equation}
Suppose by contradiction, there exists $\delta>0$ such that
$\sigma_n(Q)\cap V_\delta=\emptyset$ for $n$ large enough, where
\begin{equation*}
V_\delta=\left\{u\in H_r^{1}(\mathbb{R}^3):\exists v\in
H_r^{1}(\mathbb{R}^3),\ s.t.\ \|v-u\|\le \delta,\
\|\mathcal{I}_\lambda'(v)\|\le \delta,\
|\mathcal{I}_\lambda(v)-m_\lambda|\le \delta\right\}.
\end{equation*}
Inspired by \cite[Lemma 1]{HH50}, there exists a continuous map
$\eta:[0,1]\times H_r^{1}(\mathbb{R}^3) \to
H_r^{1}(\mathbb{R}^3)$ such that for some
$\epsilon\in(0,\frac{m_\lambda}{2})$ and all $t\in[0,1]$,

{$(a)$} $\eta(0,u)=u$, $\eta(t,-u)=-\eta(t,u)$;

{$(b)$} $\eta(t,u)=u$, for any $u\in
\mathcal{I}_\lambda^{m_\lambda-\epsilon}\cup
(H_r^{1}(\mathbb{R}^3)\setminus
\mathcal{I}_\lambda^{m_\lambda+\epsilon})$;

{$(c)$}
$\eta(1,\mathcal{I}_\lambda^{m_\lambda+\frac{\epsilon}{2}}\setminus
V_\delta)\subset
\mathcal{I}_\lambda^{m_\lambda-\frac{\epsilon}{2}}$;

{$(d)$}
$\eta(1,(\mathcal{I}_\lambda^{m_\lambda+\frac{\epsilon}{2}}\cap
P)\setminus V_\delta)\subset
\mathcal{I}_\lambda^{m_\lambda-\frac{\epsilon}{2}}\cap P$, where
$\mathcal{I}_\lambda^k:=\{u\in
H_r^{1}(\mathbb{R}^3):\mathcal{I}_\lambda(u)\le k\}$.

Since $\lim_{n\rightarrow\infty}\max_{w\in{\sigma}_n(Q)}
\mathcal{I}_\lambda(w)
=\lim_{n\rightarrow\infty}\mathcal{I}_\lambda(w_n)=m_\lambda$,
choosing $n$ large enough such that
\begin{equation}\label{556}
\sigma_n(Q)\subset
\mathcal{I}_\lambda^{m_\lambda+\frac{\epsilon}{2}}\quad
\mathrm{and} \quad \sigma_n(Q)\cap V_\delta=\emptyset.
\end{equation}
Denote $\tilde{\sigma}_n(s,t):=\eta(1,\sigma_n(s,t))$ for all
$(s,t)\in Q$. We declare that $\tilde{\sigma}_n\in \Sigma$, it
derives from \eqref{556} and property $(c)$ of $\eta$ that
$\tilde{\sigma}_n(Q)\subset
\mathcal{I}_\lambda^{m_\lambda-\frac{\epsilon}{2}}$, which leads
to a contradiction with
\begin{align*}
m_\lambda
=\inf\limits_{\sigma\in\Sigma}\sup\limits_{w\in\sigma(Q)}
\mathcal{I}_\lambda(w)
\le \max\limits_{w\in\tilde{\sigma}_n(Q)}\mathcal{I}_\lambda(w)
\le m_\lambda-\frac{\epsilon}{2}.
\end{align*}
Actually, property $(b)$ of $\eta$ and $\sigma_n\in \Sigma$ imply
that $\tilde{\sigma}_n(s,0)=\eta(1,\sigma_n(s,0))=\eta(1,0)=0$.
One hand, it follows from $\sigma_n(0,t)\in P$, \eqref{556} and
property $(d)$ of $\eta$ that $\tilde{\sigma}_n(0,t)\in P$. On
the other hand, thanks to $\sigma_n(1,t)\in -P$ and \eqref{556},
we deduce that $-\sigma_n(1,t)\in
(\mathcal{I}_\lambda^{m_\lambda+\frac{\epsilon}{2}}\cap
P)\setminus V_\delta $, which implies that
$\tilde{\sigma}_n(1,t)=\eta(1,\sigma_n(1,t))=-\eta(1,-\sigma_n(1,t))\in
-P$ with the help of properties $(a)$ and $(d)$ of $\eta$. Then,
$\tilde{\sigma}_n$ satisfies property $(\mathrm{I})$. Moreover,
using the fact that $(\mathcal{I}_\lambda\circ \sigma_n)(s,1)\le
0$ and property $(b)$ of $\eta$, we obtain that
$\tilde{\sigma}_n(s,1)=\eta(1,\sigma_n(s,1))=\sigma_n(s,1)$, then
$\tilde{\sigma}_n$ satisfies property $(\mathrm{II})$. Hence, due
to the continuity of $\eta$ and $\sigma_n$, we derive that
$\tilde{\sigma}_n\in \Sigma.$

\textbf{Claim 3}: The sequence $\{u_n\}$ obtained in Claim 2
satisfies $\{u_n\}\subset U_\lambda$ for $n$ large enough.

Since $\mathcal{I}_\lambda'(u_n)\rightarrow0$, we obtain that
$\langle \mathcal{I }_\lambda'(u_n),u_n^\pm\rangle=o(1)$. Hence,
to complete the proof of Claim 3, it suffices to check that
$u_n^\pm\neq0$, which means that
$l_\lambda(u_n^+,u_n^-)\rightarrow1$,
$l_\lambda(u_n^-,u_n^+)\rightarrow1$, and then $\{u_n\}\subset
U_\lambda$ for $n$ large enough. From \eqref{4.96}, there exists
a sequence $\{\nu_n\}$ such that
\begin{equation}\label{4.106}
\nu_n=s_nw_n^++t_nw_n^-\in {\sigma}_n(Q),\quad
\|\nu_n-u_n\|\rightarrow0.
\end{equation}
Thus, to prove $u_n^\pm\neq0$ is equivalent to prove
$s_nw_n^+\neq0$ and $t_nw_n^-\neq0$ for $n$ large enough. By
$\{w_n\}\subset \mathcal{M}_\lambda$ and Lemma
\ref{lam-126}($3$), we just need to check that $s_n\nrightarrow0$
and $t_n\nrightarrow0$ for $n$ large enough. Suppose by
contradiction that $s_n\rightarrow0$, by the continuity of
$\mathcal{I}_\lambda$ and \eqref{4.106}, we see that
\begin{equation*}
0<m_\lambda=\lim\limits_{n\rightarrow\infty}
\mathcal{I}_\lambda(\nu_n)
=\lim\limits_{n\rightarrow\infty}
\mathcal{I}_\lambda (s_nw_n^++t_nw_n^-)
=\lim\limits_{n\rightarrow\infty}
\mathcal{I}_\lambda(t_nw_n^-).
\end{equation*}
It derives from Lemma \ref{lam-126}($3$) that
$\Lambda_2\le\|w_{n}^-\|\le \Lambda_1$, which indicates that
$t_n\nrightarrow0$ and $\{t_n\}$ is bounded. Thereby, by
$\lambda<0$, $q\in(2,6)$, $\Lambda_2\le \|w_{n}^+\|\le
\Lambda_1$ and Lemma \ref{lam-16}, one concludes that
small enough,
\begin{align*}
m_\lambda&=\lim\limits_{n\rightarrow\infty}\mathcal{I}_\lambda(w_n)
=\lim\limits_{n\rightarrow\infty}\max\limits_{s,t\ge0}\,
\mathcal{I}_\lambda(sw_n^++tw_n^-)
\\&\ge\lim\limits_{n\rightarrow\infty}\max\limits_{s\ge0}\,
\mathcal{I}_\lambda(sw_n^++t_nw_n^-)
\\&\ge\lim\limits_{n\rightarrow\infty}\max\limits_{s\ge0}
\bigg(\frac{s^2}{2}\|w_n^+\|^2
+\frac{\lambda s^{10}}{10}
\int_{\mathbb{R}^{3}}\phi_{w_n^+}|w_n^+|^{5}\mathrm{d}x
+ \frac{\lambda s^{5}t_n^{5}}{5}
\int_{\mathbb{R}^{3}}\phi_{w_n^-}|w_n^+|^{5}\mathrm{d}x
\\&\quad
-\frac{s^{6}}{{6}}\int_{\mathbb{R}^{3}}{|w_n^+|^{6}}\mathrm{d}x
-\frac{s^{q}}{{q}}\int_{\mathbb{R}^{3}}{|w_n^+|^{q}}\mathrm{d}x
\bigg)
+\lim\limits_{n\rightarrow\infty}\mathcal{I}_\lambda(t_nw_n^-)
\\&\ge\lim\limits_{n\rightarrow\infty}\max\limits_{s\ge0}\left
[{\frac{s^2}{2}}\|w_n^+\|^2
-C_1\left(s^{10}\|w_n^+\|^{10}
+s^{5}\|w_n^+\|^{5}
+s^{6}\|w_n^+\|^{6}
+s^{q}\|w_n^+\|^{q}\right)
\right]+m_\lambda
\\&\ge\max\limits_{s\ge0}\left[C_2
s^2-C_{3}\left(s^{10}+s^{5}+s^{6}+s^{q}\right)\right]
+m_\lambda
\\&>m_\lambda,
\end{align*}
which leads to a contradiction. Then $\{u_n\}\subset U_\lambda$
for $n$ large enough.
\end{proof}

\subsection{Estimating the least energy $m_\lambda$}\label{sub-5.3}

\noindent In this subsection, we are devoted to estimating the
least energy $m_\lambda$ on the sign-changing Nehari manifold
$\mathcal{M}_\lambda$. For this purpose, let us denote
$u_\varepsilon:=\varphi\circ \xi_{\varepsilon}$, where
$\xi_{\varepsilon}$ is defined by \eqref{2.4} and $\varphi$ is a
cut-off function satisfying $0\le\varphi\le1,\
\varphi|_{B_{r_0}(0)}\equiv1$ and
$\mathrm{supp}(\varphi)\subset{B_{2r_0}(0)}$ for some $r_0>0$.
Similar arguments as \cite[Lemma 1.1]{BH34}, we have the
following estimates
\begin{equation}\label{3.766}
\int_{\mathbb{R}^{3}}|\nabla u_\varepsilon|^2\mathrm{d}x
=S^{\frac{3}{2}}+O(\varepsilon^{\frac{1}{2}}),\quad \quad
\left(\int_{\mathbb{R}^{3}}|u_\varepsilon|^{6}\mathrm{d}x\right)^{\frac{1}{3}}
=S^{\frac{1}{2}}+O(\varepsilon^{\frac{3}{2}})
\end{equation}
and
\begin{equation}\label{3.86}
 \int_{\mathbb{R}^{3}}|u_\varepsilon|^r\mathrm{d}x=
 \begin{cases}
 O(\varepsilon^{\frac{r}{4}}),~~~~~~~~~~~~~&r\in[2,3);\\
 O(\varepsilon^{\frac{r}{4}}|\mathrm{ln\,
 \varepsilon}|),~~~~~~~&r=3;\\
 O(\varepsilon^{\frac{6-r}{4}}),~~~~~~~~~~~&r\in(3,6).
 \end{cases}
 \end{equation}

\begin{lem}\label{lam-gt6}
Let $A, B, C>0$ and define $g: [0,\infty)\to \mathbb{R}$ by
\begin{align*}
g(t)=\frac{A}{2}t^2+\frac{\lambda B}{10}t^{10}-\frac{C}{6}t^{6},
\end{align*}
where $\lambda<0$ is a constant. Then
\begin{equation*}
\max_{t\geq 0}g(t)=\left(\frac{C-\sqrt{C^2-4\lambda AB}}{2\lambda
B}\right)^{\frac{1}{2}}
\frac{12 \lambda AB-C^2+C\sqrt{C^2-4\lambda AB}}{30\lambda B}.
\end{equation*}
\end{lem}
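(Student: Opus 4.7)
The plan is to locate the critical points of $g$ by a substitution that turns the equation $g'(t)=0$ into a quadratic, then to exploit that quadratic identity to simplify the value $g(t_*)$ without a painful direct substitution.

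First I would compute $g'(t)=At+\lambda B t^9-Ct^5=t(A-Ct^4+\lambda B t^8)$, so besides $t=0$ the critical points of $g$ on $[0,\infty)$ correspond to positive roots of $h(y):=\lambda B y^2-Cy+A=0$, where $y=t^4$. Since $\lambda<0$ and $A,B,C>0$, one has $C^2-4\lambda AB>C^2>0$, so the two real roots are
\begin{equation*}
y_{\pm}=\frac{C\pm\sqrt{C^2-4\lambda AB}}{2\lambda B}.
\end{equation*}
With $2\lambda B<0$ and $C+\sqrt{C^2-4\lambda AB}>0$, the root $y_+$ is negative and hence irrelevant, while $y_-$ is positive because its numerator is negative. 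Therefore the unique positive critical point is $t_*=y_-^{1/4}$.

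Next I would verify that $t_*$ is a global maximum. The function $y\mapsto h(y)$ satisfies $h(0)=A>0$ and is strictly decreasing on $[0,\infty)$ (its derivative is $2\lambda By-C<0$), so $h(y)>0$ on $(0,y_-)$ and $h(y)<0$ on $(y_-,\infty)$. Translating back, $g'(t)>0$ on $(0,t_*)$ and $g'(t)<0$ on $(t_*,\infty)$, and since $g(t)\to-\infty$ as $t\to\infty$ (because $\lambda B<0$ dominates the leading term), $t_*$ realizes $\max_{t\geq 0}g(t)$.

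For the explicit value, the main trick is to use the defining relation $\lambda B y_-^2=Cy_--A$, equivalently $\lambda B t_*^{10}=Ct_*^6-At_*^2$. Substituting this into $g(t_*)$ collapses the $t_*^{10}$ term:
\begin{equation*}
g(t_*)=\frac{A}{2}t_*^2+\frac{1}{10}(Ct_*^6-At_*^2)-\frac{C}{6}t_*^6=\frac{2A}{5}t_*^2-\frac{C}{15}t_*^6=\frac{t_*^2}{15}\bigl(6A-Cy_-\bigr).
\end{equation*}
Plugging in $y_-=(C-\sqrt{C^2-4\lambda AB})/(2\lambda B)$ and $t_*^2=y_-^{1/2}$ and clearing denominators gives
\begin{equation*}
\max_{t\geq 0}g(t)=\left(\frac{C-\sqrt{C^2-4\lambda AB}}{2\lambda B}\right)^{\!1/2}\frac{12\lambda AB-C^2+C\sqrt{C^2-4\lambda AB}}{30\lambda B},
\end{equation*}
which is the desired identity. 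No step here is genuinely hard; the only place one has to be careful is choosing the right root of the quadratic (since $\lambda<0$ flips the sign of the denominator, the geometrically correct root is $y_-$ rather than $y_+$), and keeping track of signs when simplifying $g(t_*)$.
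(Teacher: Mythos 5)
Your proof takes essentially the same approach as the paper: compute $g'$, reduce the critical-point equation to a quadratic in $t^4$, select the root $y_- = \frac{C-\sqrt{C^2-4\lambda AB}}{2\lambda B}$, and substitute back. You are somewhat more careful than the paper (which simply writes down the root and substitutes) in that you justify why $y_-$ rather than $y_+$ is the relevant root, verify via the sign of $h$ that the critical point is a global maximum, and use the identity $\lambda B t_*^{10}=Ct_*^6-At_*^2$ to collapse the degree-10 term before evaluating; all of this is correct and the algebra checks out.
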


\begin{proof}
For $t\geq 0$, we have $g'(t)=t\left(A+\lambda Bt^{8} -Ct^{4}
\right)$. Let $h(t)=A+\lambda Bt^{8} -Ct^{4}$, we arrive at
\begin{equation*}
t^{4}=\frac{C-\sqrt{C^2-4\lambda AB}}{2\lambda B}.
\end{equation*}
Substituting it into $g(t)$, we obtain the result, and the proof
is completed.
\end{proof}

\begin{lem}\label{lam-416}
Let $f(t)
=\frac{t^2}{2}\int_{\mathbb{R}^{3}}|\nabla
u_{\varepsilon}|^{2}\mathrm{d}x
+\frac{\lambda
t^{10}}{10}\int_{\mathbb{R}^{3}}\phi_{u_{\varepsilon}}
|u_{\varepsilon}|^{5}\mathrm{d}x
-\frac{t^{6}}{6}
\int_{\mathbb{R}^{3}}|u_{\varepsilon}|^{6}\mathrm{d}x$, for $t\ge
0$ and $\lambda<0$. Then we obtain that as $\varepsilon\to 0$,
\begin{align*}
\max\limits_{t\ge 0}f(t)
\le \left(\frac{1-\sqrt{1-4\lambda}}{2\lambda
}\right)^{\frac{1}{2}}
\frac{12 \lambda-1+\sqrt{1-4\lambda }}{30\lambda }
S^{\frac{3}{2}}+O(\varepsilon^{\frac{1}{2}}).
\end{align*}
\end{lem}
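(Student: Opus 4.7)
The plan is to apply Lemma \ref{lam-gt6} directly to $f(t)$ with the identifications
\[
A = \int_{\mathbb{R}^3}|\nabla u_\varepsilon|^2\,\mathrm{d}x, \qquad
B = \int_{\mathbb{R}^3}\phi_{u_\varepsilon}|u_\varepsilon|^5\,\mathrm{d}x, \qquad
C = \int_{\mathbb{R}^3}|u_\varepsilon|^6\,\mathrm{d}x,
\]
and then to substitute asymptotic expansions of $A$, $B$, $C$ into the closed-form expression for $\max_{t\ge 0} f(t)$ provided by that lemma. The estimates in \eqref{3.766} immediately give $A = S^{3/2} + O(\varepsilon^{1/2})$ and $C = S^{3/2} + O(\varepsilon^{3/2})$, so the first substantive task is to produce an analogous expansion for $B$.

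For the nonlocal coefficient $B$, I would exploit the fact that the extremal bubble $\xi_\varepsilon$ defined in \eqref{2.4} solves the Sobolev-critical equation $-\Delta\xi_\varepsilon \propto \xi_\varepsilon^{5}$, so that $\phi_{\xi_\varepsilon}$ is an explicit scalar multiple of $\xi_\varepsilon$ itself. This reduces $\int_{\mathbb{R}^3}\phi_{\xi_\varepsilon}|\xi_\varepsilon|^5\,\mathrm{d}x$ to a constant multiple of $\int_{\mathbb{R}^3}|\xi_\varepsilon|^6\,\mathrm{d}x = S^{3/2}$, which (with the normalization of $\xi_\varepsilon$ built into \eqref{2.4}) gives the value $S^{3/2}$. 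Passing from $\xi_\varepsilon$ to the truncated bubble $u_\varepsilon = \varphi\xi_\varepsilon$ introduces a perturbation controlled by the tail integrals of the Riesz potential over $\{|y|\ge r_0\}$, which by the same scaling arguments that produce \eqref{3.766}-\eqref{3.86} contribute a remainder of order at most $O(\varepsilon^{1/2})$. Altogether $B = S^{3/2}+O(\varepsilon^{1/2})$.

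Next I substitute $A$, $B$, $C$ into the closed form
\[
\max_{t\ge 0}f(t) = \left(\frac{C-\sqrt{C^2-4\lambda AB}}{2\lambda B}\right)^{\!1/2}\cdot\frac{12\lambda AB - C^2 + C\sqrt{C^2-4\lambda AB}}{30\lambda B}
\]
from Lemma \ref{lam-gt6}. Because $\lambda<0$, the discriminant $C^2-4\lambda AB$ is strictly positive and bounded below by a constant, so the right-hand side is a smooth function of $(A,B,C)$ on a neighborhood of $(S^{3/2},S^{3/2},S^{3/2})$. A direct verification shows that evaluating at $A=B=C=S^{3/2}$ reproduces exactly the constant $\bigl(\frac{1-\sqrt{1-4\lambda}}{2\lambda}\bigr)^{1/2}\frac{12\lambda-1+\sqrt{1-4\lambda}}{30\lambda}\,S^{3/2}$, and a first-order Taylor expansion converts the $O(\varepsilon^{1/2})$ perturbations in $A$, $B$, $C$ into an additive error of the same order, yielding the claimed bound.

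The step I expect to be most delicate is the expansion of $B$: since $\phi_{u_\varepsilon}$ is a Riesz potential of $|u_\varepsilon|^5$, one has to split the defining double integral into a principal part (where $\varphi\equiv 1$) plus tails, invoke the identity $\phi_{\xi_\varepsilon}\propto\xi_\varepsilon$ to pin down the leading constant $S^{3/2}$, and verify that the tail is genuinely $O(\varepsilon^{1/2})$ or better; the subsequent algebra and Taylor bookkeeping that assemble all errors into a single $O(\varepsilon^{1/2})$ are comparatively routine once these pieces are in place.
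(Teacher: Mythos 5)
Your proof is correct, but it takes a genuinely different route from the paper's for the key estimate of $B=\int_{\mathbb{R}^3}\phi_{u_\varepsilon}|u_\varepsilon|^5\,\mathrm{d}x$. You compute $B$ exactly (up to $O(\varepsilon^{1/2})$), exploiting the self-reproducing property $-\Delta\xi_\varepsilon=\xi_\varepsilon^5$ which forces $\phi_{\xi_\varepsilon}=\xi_\varepsilon$ and hence $\int\phi_{\xi_\varepsilon}|\xi_\varepsilon|^5=|\xi_\varepsilon|_6^6=S^{3/2}$, and you then control the truncation error by a Hardy--Littlewood--Sobolev argument (which in fact gives $O(\varepsilon^{5/4})$, better than you need). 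The paper instead sidesteps any direct computation of $B$: it integrates $-\Delta\phi_{u_\varepsilon}=|u_\varepsilon|^5$ against $|u_\varepsilon|$ and applies Young's inequality with $\tau=1$ to get only the one-sided lower bound $B\ge 2\int|u_\varepsilon|^6-\int|\nabla u_\varepsilon|^2=S^{3/2}+O(\varepsilon^{1/2})$, which suffices because $\lambda<0$ means $\lambda t^{10}B/10$ is made no smaller by lowering $B$. Both arguments then feed $A=B=C=S^{3/2}+O(\varepsilon^{1/2})$ into Lemma~\ref{lam-gt6} and conclude by smoothness of that closed form near $(S^{3/2},S^{3/2},S^{3/2})$. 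Your route is more explicit and yields a two-sided asymptotic for $B$ that could be reused independently of the sign of $\lambda$; the paper's is shorter, needs nothing beyond what is already known about $A$ and $C$, and avoids having to discuss the Riesz potential of the cut-off bubble, but is tied to $\lambda<0$. One caveat: your step relies on the normalization claim $\|\xi_\varepsilon\|_{\mathcal{D}^{1,2}}^2=|\xi_\varepsilon|_6^6=S^{3/2}$, which is only consistent if $\xi_\varepsilon$ carries the extra $3^{1/4}$ Aubin--Talenti factor (the formula \eqref{2.4} as printed has $-\Delta\xi_\varepsilon=3\xi_\varepsilon^5$, giving $\phi_{\xi_\varepsilon}=\tfrac13\xi_\varepsilon$); you correctly inherit the paper's stated normalization, so this does not affect the validity of your argument, but it is worth flagging when invoking $\phi_{\xi_\varepsilon}=\xi_\varepsilon$.
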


\begin{proof}
From $-\Delta \phi_{u_\varepsilon}=|u_\varepsilon|^5$, it holds
that
\begin{align*}
\int_{\mathbb{R}^3}|u_\varepsilon|^6\mathrm{d}x
=\int_{\mathbb{R}^3}-\Delta \phi_{u_\varepsilon}
|u_\varepsilon|\mathrm{d}x
&\le \frac{1}{2}\int_{\mathbb{R}^3}|\nabla
\phi_{u_\varepsilon}|^2\mathrm{d}x
+\frac{1}{2}\int_{\mathbb{R}^3}|\nabla
|{u_\varepsilon}||^2\mathrm{d}x
\nonumber\\&\le \frac{1}{2}
\int_{\mathbb{R}^3}\phi_{u_\varepsilon}|u_\varepsilon|^5\mathrm{d}x
+\frac{1}{2}\int_{\mathbb{R}^3}|\nabla
{u_\varepsilon}|^2\mathrm{d}x.
\end{align*}
For $\varepsilon>0$ small enough, it derives from the above
inequality and \eqref{3.766} that
\begin{align*}
\int_{\mathbb{R}^3}\phi_{u_\varepsilon}|u_\varepsilon|^5\mathrm{d}x
\ge 2\int_{\mathbb{R}^3}|u_\varepsilon|^6\mathrm{d}x
-\int_{\mathbb{R}^3}|\nabla {u_\varepsilon}|^2\mathrm{d}x
=S^{\frac{3}{2}}+O(\varepsilon^{\frac{1}{2}}),
\end{align*}
which together with Lemma \ref{lam-gt6} and \eqref{3.766} yields
that
\begin{align*}
f(t)
&=\frac{t^2}{2}\int_{\mathbb{R}^{3}}|\nabla
u_{\varepsilon}|^{2}\mathrm{d}x
+\frac{\lambda
t^{10}}{10}\int_{\mathbb{R}^{3}}\phi_{u_{\varepsilon}}
|u_{\varepsilon}|^{5}\mathrm{d}x
-\frac{t^{6}}{6}\int_{\mathbb{R}^{3}}|u_{\varepsilon}|^{6}\mathrm{d}x
\\&\le
\frac{t^2}{2}\left(S^{\frac{3}{2}}+O(\varepsilon^{\frac{1}{2}})\right)
+\frac{\lambda
t^{10}}{10}\left(S^{\frac{3}{2}}+O(\varepsilon^{\frac{1}{2}})\right)
-\frac{t^{6}}{6}\left(S^{\frac{3}{2}}+O(\varepsilon^{\frac{3}{2}})\right)
\\&\le \left(\frac{1-\sqrt{1-4\lambda}}{2\lambda
}\right)^{\frac{1}{2}}
\frac{12 \lambda-1+\sqrt{1-4\lambda }}{30\lambda
}S^{\frac{3}{2}}+O(\varepsilon^{\frac{1}{2}}),
\end{align*}
for $\varepsilon>0$ small enough and $\lambda<0$. So, the proof
is completed.
\end{proof}

For the simplicity of notation, let us denote
$c^*_\lambda:=\left(\frac{1-\sqrt{1-4\lambda}}{2\lambda
}\right)^{\frac{1}{2}}
\frac{12 \lambda-1+\sqrt{1-4\lambda }}{30\lambda
}S^{\frac{3}{2}}$.  Next, we will estimate the upper bound of the
least energy $m_\lambda$ on the sign-changing Nehari manifold
$\mathcal{M}_\lambda$ by using a test function, which is a key
point in this paper.

\begin{lem}\label{lam-86}
Assume that $\lambda<0$ and $q\in(5,6)$ hold. Then
$m_\lambda<c_\lambda+c^*_\lambda$, where $m_\lambda$ and $c_\lambda$ defined by \eqref{9.26} and \eqref{9.25} respectively.
\end{lem}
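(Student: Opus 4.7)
I will follow a Brezis--Nirenberg-type strategy adapted to the sign-changing setting. Let $v_0$ denote the positive radial ground state from Lemma~\ref{lam-64}(2); by Remark~\ref{rem-6} it is bounded and smooth. For each $\varepsilon>0$, set
\[
w_\varepsilon := v_0 - u_\varepsilon,
\]
where $u_\varepsilon=\varphi\,\xi_\varepsilon$ is the truncated Aubin--Talenti bubble. Since $u_\varepsilon(0)\sim\varepsilon^{-1/4}\to\infty$ while $v_0$ stays bounded, $w_\varepsilon$ is negative in a small neighborhood of the origin and equals $v_0$ outside $B_{2r_0}(0)$, so it is radial and sign-changing for $\varepsilon$ small, with $w_\varepsilon^+$ and $w_\varepsilon^-$ of disjoint supports. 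Applying Lemma~\ref{lam-16} gives a unique pair $(s_\varepsilon,t_\varepsilon)\in(0,\infty)^2$ with $s_\varepsilon w_\varepsilon^++t_\varepsilon w_\varepsilon^-\in\mathcal{M}_\lambda$, hence
\[
m_\lambda\le\mathcal{I}_\lambda(s_\varepsilon w_\varepsilon^++t_\varepsilon w_\varepsilon^-)=\max_{s,t\ge 0}\mathcal{I}_\lambda(sw_\varepsilon^++tw_\varepsilon^-).
\]

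Disjointness of supports yields $|sw_\varepsilon^++tw_\varepsilon^-|^5=s^5(w_\varepsilon^+)^5+t^5|w_\varepsilon^-|^5$, so Proposition~\ref{prop-1}(2) and linearity of $-\Delta\phi=|u|^5$ give $\phi_{sw_\varepsilon^++tw_\varepsilon^-}=s^5\phi_{w_\varepsilon^+}+t^5\phi_{w_\varepsilon^-}$. A term-by-term expansion then produces
\[
\mathcal{I}_\lambda(sw_\varepsilon^++tw_\varepsilon^-)=\mathcal{I}_\lambda(sw_\varepsilon^+)+\mathcal{I}_\lambda(tw_\varepsilon^-)+\frac{\lambda s^5t^5}{5}\int_{\mathbb{R}^{3}}\phi_{w_\varepsilon^+}|w_\varepsilon^-|^5\,\mathrm{d}x,
\]
and the cross term is non-positive because $\lambda<0$. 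Thus it suffices to control $\max_s\mathcal{I}_\lambda(sw_\varepsilon^+)$ and $\max_t\mathcal{I}_\lambda(tw_\varepsilon^-)$ separately.

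The identity $w_\varepsilon^+-v_0=-\min(u_\varepsilon,v_0)$, the bubble estimates \eqref{3.766}--\eqref{3.86}, and the observation $|\{u_\varepsilon\ge v_0\}|=O(\varepsilon^{3/4})$ together give $\|w_\varepsilon^+-v_0\|^2=O(\varepsilon^{1/4})$. Since $v_0$ is a critical point of $\mathcal{I}_\lambda$ with $\mathcal{I}_\lambda(v_0)=c_\lambda$, continuity of the Nehari projection (Lemma~\ref{lam-64}(1)) and a second-order Taylor expansion around $v_0$ yield $\max_s\mathcal{I}_\lambda(sw_\varepsilon^+)\le c_\lambda+O(\varepsilon^{1/4})$. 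For the negative part, $w_\varepsilon^-$ is concentrated near the origin, so Lemma~\ref{lam-416} applied to $w_\varepsilon^-$ bounds $\max_t[\tfrac{t^2}{2}\int|\nabla w_\varepsilon^-|^2\,\mathrm{d}x+\tfrac{\lambda t^{10}}{10}\int\phi_{w_\varepsilon^-}|w_\varepsilon^-|^5\,\mathrm{d}x-\tfrac{t^6}{6}\int|w_\varepsilon^-|^6\,\mathrm{d}x]\le c^*_\lambda+O(\varepsilon^{1/2})$; the mass term $\tfrac{t^2}{2}\int(w_\varepsilon^-)^2\,\mathrm{d}x$ contributes an additional $O(\varepsilon^{1/2})$, while the subcritical term $-\tfrac{t^q}{q}\int|w_\varepsilon^-|^q\,\mathrm{d}x$ evaluated at the optimal $t_*$ (bounded away from $0$ and $\infty$) yields an additional $-C\varepsilon^{(6-q)/4}$ with $C>0$ by \eqref{3.86}. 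Adding the two estimates gives
\[
m_\lambda\le c_\lambda+c^*_\lambda+O(\varepsilon^{1/4})+O(\varepsilon^{1/2})-C\varepsilon^{(6-q)/4}.
\]
The restriction $q\in(5,6)$ ensures $(6-q)/4<1/4$, so the subcritical gain strictly dominates the error terms for $\varepsilon$ small and therefore $m_\lambda<c_\lambda+c^*_\lambda$. The main technical obstacle is the second-order bound $\max_s\mathcal{I}_\lambda(sw_\varepsilon^+)\le c_\lambda+O(\varepsilon^{1/4})$: a crude Lipschitz estimate would only yield $O(\varepsilon^{1/8})$, which cannot be absorbed by the subcritical gain, so one must carefully exploit that $v_0$ is a critical point to upgrade the order of vanishing in $\|w_\varepsilon^+-v_0\|$.
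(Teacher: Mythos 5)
Your test function is built by truncating $v_0-u_\varepsilon$ into its positive and negative parts and scaling them separately, which is a genuinely different construction from the paper's: the paper instead finds $s_\varepsilon, t_\varepsilon$ so that $s_\varepsilon v_0 - t_\varepsilon u_\varepsilon \in \mathcal{M}_\lambda$ (via a fixed-point in the parameter $r$ for $\psi(r)=\tfrac{1}{r}v_0-u_\varepsilon$), keeps $v_0$ and $u_\varepsilon$ intact in the expansion, and then bounds the six cross-terms $\Pi_2,\dots,\Pi_6$ by elementary pointwise inequalities. The advantage of the paper's choice is decisive here: since $v_0\in\mathcal{N}_\lambda$ is the maximizer along its own ray, $\mathcal{I}_\lambda(s_\varepsilon v_0)\le \max_s\mathcal{I}_\lambda(sv_0)=c_\lambda$ holds \emph{exactly}, with no error at all, and the subcritical gain $-C\varepsilon^{(6-q)/4}$ only has to beat interaction terms of order $\varepsilon^{1/4}$ coming from $u_\varepsilon$ meeting the bounded function $v_0$.

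Your route avoids the interaction terms but pays for it with the estimate you yourself flag as ``the main technical obstacle,'' namely $\max_s\mathcal{I}_\lambda(sw_\varepsilon^+)\le c_\lambda+O(\varepsilon^{1/4})$, and this is a genuine gap. You assert it follows from ``continuity of the Nehari projection (Lemma~\ref{lam-64}(1)) and a second-order Taylor expansion around $v_0$,'' but Lemma~\ref{lam-64}(1) gives only existence/uniqueness of the projection, not a Lipschitz rate $|\bar s(w_\varepsilon^+)-1|=O(\|w_\varepsilon^+-v_0\|)$. To get that rate one must invoke the implicit function theorem, which requires $\mathcal{I}_\lambda\in\mathcal{C}^2$ and a nondegeneracy of the fibering map at $v_0$, and then combine it with a $\mathcal{C}^{1,1}$ bound on bounded sets; none of this is established (it is plausible for $q\in(5,6)$, but it is exactly the content that needs proof, since a mere Lipschitz bound gives $O(\varepsilon^{1/8})$, which loses to $\varepsilon^{(6-q)/4}$ when $q>11/2$). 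A secondary but real imprecision: you invoke Lemma~\ref{lam-416} ``applied to $w_\varepsilon^-$,'' but that lemma is stated and proved only for $u_\varepsilon$ and its $O(\varepsilon^{1/2})$ error rests on the specific estimates \eqref{3.766}; for $w_\varepsilon^-=-(u_\varepsilon-v_0)^+$ the corresponding error in $\int|\nabla w_\varepsilon^-|^2$ and $\int|w_\varepsilon^-|^6$ is only $O(\varepsilon^{1/4})$ (coming from the portion $\{u_\varepsilon<v_0\}$ that is removed), so the constants must be re-derived. Neither omission changes the final exponent comparison, but both are missing steps, and the first one is essential.
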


\begin{proof}
The main idea of this lemma is to look for an element in
$\mathcal{M}_\lambda$ such that the energy value of this element
is strictly less than $c_\lambda+c^*_\lambda$. We break the proof
into two parts.

Firstly, we assert that there exist
$s_\varepsilon,t_\varepsilon>0$ such that $s_\varepsilon
v_0-t_\varepsilon u_\varepsilon\in\mathcal{M}_\lambda$, where
$v_0$ is a positive least energy solution of system \eqref{1.6}
obtained by Lemma \ref{lam-64}. From Remark \ref{rem-6}, we
obtain that $v_0\in L^{\infty}({\mathbb{R}^{3}})$. Let us define
$\psi(r):=\frac{1}{r}v_0-u_\varepsilon$ with $r>0$, define
$r_1=\mathrm{sup}\{r\in\mathbb{R}^+:\psi^+(r)\neq0\}$ and
$r_2=\mathrm{inf}\{r\in\mathbb{R}^+:\psi^-(r)\neq0\}$. Because of
the positivity and regularity of $v_0$, it is easy to verify that
$r_1=\infty$ and $0<r_2<r_1$. As $r\rightarrow r_2^+$, this
immediately implies that $\psi^-(r)\rightarrow0$ and
$\psi^+(r)\to \frac{1}{r_2}v_0-u_\varepsilon\neq 0$. Then, we
deduce from Lemma \ref{lam-126}($2$) that
$t(\psi(r))\rightarrow\infty$. Similar to the proof of Lemma
\ref{lam-126}($1$), we see that $\left\{s(\psi(r))\right\}$ is
bounded in $\mathbb{R}^+$, and thus as $r\rightarrow r_2^+$,
\begin{equation*}
s(\psi(r))-t(\psi(r))\rightarrow-\infty.
\end{equation*}
As $r\rightarrow r_1=\infty$, $\psi^+(r)\rightarrow0$, it follows
from Lemma \ref{lam-126}$(2)$ and the proof of Lemma
\ref{lam-126}$(1)$ that $s(\psi(r))\rightarrow\infty$  and
$\left\{t(\psi(r))\right\}$ is bounded in $\mathbb{R}^+$, and
thus
\begin{equation*}
s(\psi(r))-t(\psi(r))\rightarrow\infty.
\end{equation*}
Then, Lemma \ref{lam-126}($1$) implies that there exists
$r_\varepsilon\in(r_2,r_1)$ such that
$s(\psi(r_\varepsilon))=t(\psi(r_\varepsilon))$. Let us denote
$s_\varepsilon=\frac{1}{r_\varepsilon}s(\psi(r_\varepsilon))$ and
$t_\varepsilon=t(\psi(r_\varepsilon))$, it is easy to obtain that
 \begin{align*}
s(\psi(r_\varepsilon))\psi(r_\varepsilon)
=s(\psi(r_\varepsilon))\psi^+(r_\varepsilon)
+t(\psi(r_\varepsilon))\psi^-(r_\varepsilon)
=s_\varepsilon v_0-t_\varepsilon u_\varepsilon
\in\mathcal{M}_\lambda.
\end{align*}
Moreover, it follows from Lemma \ref{lam-16} that
$\mathcal{I }_\lambda(s_\varepsilon v_0-t_\varepsilon
u_\varepsilon)=\max_{s,t\ge0}\mathcal{I}_\lambda(sv_0-tu_\varepsilon)$.

Secondly, we show that $m_\lambda<c_\lambda+c^*_\lambda$. It is
easy to check that $\mathcal{I}_\lambda(s_\varepsilon
v_0-t_\varepsilon u_\varepsilon)<0$ if $s_\varepsilon$ or
$t_\varepsilon$ large enough. Additionally, the continuity of
$\mathcal{I}_\lambda$ with respect to $t$ implies that
$\mathcal{I}_\lambda(sv_0-tu_\varepsilon)<c_\lambda+c^*_\lambda$
if $t$ small enough. Thus, it suffices to consider the case that
$s_\varepsilon,t_\varepsilon$ contained in a bounded interval.
Through a simple calculation, we can obtain that
\begin{align}\label{9.5}
m_\lambda
\le\mathcal{I}_\lambda(s_\varepsilon v_0-t_\varepsilon
u_\varepsilon)
=\mathcal{I}_\lambda(s_\varepsilon
v_0)+\Pi_1+\Pi_2+\Pi_3+\Pi_4+\Pi_5+\Pi_6,
\end{align}
where
\begin{align*}
\Pi_1&=\frac{1}{2}\|t_\varepsilon
u_\varepsilon\|_{\mathcal{D}^{1,2}(\mathbb{R}^3)}^2
+\frac{\lambda}{10}\int_{\mathbb{R}^{3}}\phi_{t_\varepsilon
u_\varepsilon}|t_\varepsilon u_\varepsilon|^{5}\mathrm{d}x
-\frac{1}{6}\int_{\mathbb{R}^{3}}|t_\varepsilon
u_\varepsilon|^{6}\mathrm{d}x,\\
\Pi_2&=\frac{1}{2}\|s_\varepsilon v_0-t_\varepsilon
u_\varepsilon\|_{\mathcal{D}^{1,2}(\mathbb{R}^3)}^2
-\frac{1}{2}\|s_\varepsilon
v_0\|_{\mathcal{D}^{1,2}(\mathbb{R}^3)}^2
-\frac{1}{2}\|t_\varepsilon
u_\varepsilon\|_{\mathcal{D}^{1,2}(\mathbb{R}^3)}^2,\\
\Pi_3&=\frac{1}{2}\int_{\mathbb{R}^{3}}\left(|s_\varepsilon
v_0-t_\varepsilon u_\varepsilon|^{2}
-|s_\varepsilon v_0|^{2}\right)\mathrm{d}x
-\frac{1}{q}\int_{\mathbb{R}^{3}}|t_\varepsilon
u_\varepsilon|^{q}
\mathrm{d}x,\\
\Pi_4&=\frac{1}{q}\int_{\mathbb{R}^{3}}\left(|s_\varepsilon
v_0|^{q}
+|t_\varepsilon u_\varepsilon|^{q}
-|s_\varepsilon v_0-t_\varepsilon
u_\varepsilon|^{q}\right)\mathrm{d}x,\\
\Pi_5&=\frac{1}{6}\int_{\mathbb{R}^{3}}\left(|s_\varepsilon
v_0|^{6}+|t_\varepsilon u_\varepsilon|^{6}-|s_\varepsilon
v_0-t_\varepsilon u_\varepsilon|^{6}\right)\mathrm{d}x,\\
\Pi_6&=\frac{\lambda}{10}\int_{\mathbb{R}^{3}}\left(\phi_{s_\varepsilon
v_0-t_\varepsilon u_\varepsilon}|s_\varepsilon v_0-t_\varepsilon
u_\varepsilon|^{5}
-\phi_{s_\varepsilon v_0}|s_\varepsilon v_0|^{5}
-\phi_{t_\varepsilon u_\varepsilon}|t_\varepsilon
u_\varepsilon|^{5}\right)\mathrm{d}x.
\end{align*}
By Lemma \ref{lam-416}, it is easy to prove that as
$\varepsilon\rightarrow0$,
\begin{equation}\label{4.116}
\Pi_1\le\max\limits_{t\ge 0}
\left(\frac{t^2}{2}\|
u_\varepsilon\|_{\mathcal{D}^{1,2}(\mathbb{R}^3)}^2
+\frac{\lambda t^{10}}{10}
\int_{\mathbb{R}^{3}}\phi_{ u_\varepsilon}|
u_\varepsilon|^{5}\mathrm{d}x
-\frac{t^6}{6}
\int_{\mathbb{R}^{3}}| u_\varepsilon|^{6}\mathrm{d}x\right)
=c^*_\lambda+O(\varepsilon^{\frac{1}{2}}).
\end{equation}
Furthermore, since $|a-b|^2\le |a|^2+|b|^2$ for all
$a,b\in\mathbb{R}^+$, by a simple calculation, we arrive at
\begin{align}\label{3.66}
\Pi_2&=\frac{1}{2}\|s_\varepsilon v_0-t_\varepsilon
u_\varepsilon\|_{\mathcal{D}^{1,2}(\mathbb{R}^3)}^2
-\frac{1}{2}\|s_\varepsilon
v_0\|_{\mathcal{D}^{1,2}(\mathbb{R}^3)}^2
-\frac{1}{2}\|t_\varepsilon
u_\varepsilon\|_{\mathcal{D}^{1,2}(\mathbb{R}^3)}^2\le 0,
\end{align}
and by \eqref{3.86}, it follows from $q\in(5,6)$ that
\begin{align}\label{3.76}
\Pi_3
&\le \frac{1}{2}\int_{\mathbb{R}^{3}}|t_\varepsilon
u_\varepsilon|^{2}
\mathrm{d}x
-\frac{1}{q}\int_{\mathbb{R}^{3}}|t_\varepsilon
u_\varepsilon|^{q}
\mathrm{d}x
= C_1 |u_\varepsilon|_2^2-C_2 |u_\varepsilon|_q^q
= C_1 \varepsilon^{\frac{1}{2}}
-C_2 \varepsilon^{\frac{6-q}{4}}.
\end{align}
To proceed further, we need the following inequality:
$|a-b|^r-a^r-b^r\ge -C(a^{r-1}b+ab^{r-1})$ for all $a,b\ge 0$ and
$r\ge 1$
 (see \cite[Calculus Lemma]{GT92}). This together with
 $q\in(5,6)$, $v_0\in L^\infty(\mathbb{R}^3)$, H\"{o}lder
 inequality and the boundedness of $s_\varepsilon,t_\varepsilon$,
 it holds that
\begin{align}\label{3.136}
\Pi_4&\le C_1\int_{\mathbb{R}^{3}}\left(|s_\varepsilon
v_0|^{q-1}|t_\varepsilon u_\varepsilon|
+|s_\varepsilon v_0||t_\varepsilon
u_\varepsilon|^{q-1}\right)\mathrm{d}x
\nonumber\\&\le C_2|v_0|_\infty^{q-1}\int_{|x|\le
2r_0}|u_\varepsilon|\mathrm{d}x
+C_3|v_0|_\infty\int_{\mathbb{R}^3}|u_\varepsilon|^{q-1}\mathrm{d}x
\nonumber\\&\le C_4\left(\int_{|x|\le
2r_0}|u_\varepsilon|^2\mathrm{d}x\right)^{\frac{1}{2}}
+C_5 \varepsilon^{\frac{7-q}{4}}
\nonumber\\&\le C_6 \varepsilon^{\frac{1}{4}}
+C_5 \varepsilon^{\frac{7-q}{4}}
\le C_7 \varepsilon^{\frac{1}{4}},
\end{align}
and
\begin{align}\label{3.146}
\Pi_5&=C_1\int_{\mathbb{R}^{3}}\left(|s_\varepsilon
v_0|^{5}|t_\varepsilon u_\varepsilon|
+|s_\varepsilon v_0||t_\varepsilon
u_\varepsilon|^{5}\right)\mathrm{d}x
\nonumber\\&\le C_2|v_0|_\infty^{5}
\int_{|x|\le 2r_0}|u_\varepsilon|\mathrm{d}x
+ C_3|v_0|_\infty
\int_{\mathbb{R}^{3}}|u_\varepsilon|^{5}\mathrm{d}x
\nonumber\\&\le C_4
\left(\int_{|x|\le
2r_0}|u_\varepsilon|^2\mathrm{d}x\right)^{\frac{1}{2}}
+C_5 \varepsilon^{\frac{1}{4}}
\nonumber\\&\le C_6 \varepsilon^{\frac{1}{4}}.
\end{align}
In view of this, it suffices to prove $\Pi_6$. Recall that
$|a-b|^r-a^r-b^r\ge -C(a^{r-1}b+ab^{r-1})$ for all $a,b\ge 0$ and
$r\ge 1$, it holds that
\begin{align}\label{9.4}
&\int_{\mathbb{R}^{3}}\phi_{s_\varepsilon v_0-t_\varepsilon
u_\varepsilon}|s_\varepsilon v_0-t_\varepsilon
u_\varepsilon|^{5}\mathrm{d}x
\nonumber\\&=
\int_{\mathbb{R}^3}\int_{\mathbb{R}^3}\frac{
|(s_\varepsilon v_0-t_\varepsilon u_\varepsilon)(y)|^{5}
|(s_\varepsilon v_0-t_\varepsilon
u_\varepsilon)(x)|^{5}}{|x-y|}\mathrm{d}x \mathrm{d}y
\nonumber\\&\ge
\int_{\mathbb{R}^3}\int_{\mathbb{R}^3}\frac{
|s_\varepsilon v_0(y)|^{5}
|s_\varepsilon v_0(x)-t_\varepsilon u_\varepsilon(x)|^{5}}
{|x-y|}\mathrm{d}x \mathrm{d}y
+\int_{\mathbb{R}^3}\int_{\mathbb{R}^3}\frac{
|t_\varepsilon u_\varepsilon(y)|^{5}
|s_\varepsilon v_0(x)-t_\varepsilon u_\varepsilon(x)|^{5}}
{|x-y|}
\mathrm{d}x \mathrm{d}y
\nonumber\\&\ -C_1\int_{\mathbb{R}^3}
\frac{
\left(|s_\varepsilon v_0(y)|^{4}|t_\varepsilon u_\varepsilon(y)|
+|s_\varepsilon v_0(y)||t_\varepsilon
u_\varepsilon(y)|^{4}\right)
|s_\varepsilon v_0(x)-t_\varepsilon u_\varepsilon(x)|^{5}}
{|x-y|}\mathrm{d}x \mathrm{d}y
\nonumber\\&\ge
\int_{\mathbb{R}^3}\int_{\mathbb{R}^3}\frac{
|s_\varepsilon v_0(y)|^{5}
\left[|s_\varepsilon v_0(x)|^{5}
\!+\!|t_\varepsilon u_\varepsilon(x)|^{5}
\!-\!C_2\left(|s_\varepsilon v_0(x)|^{4}|t_\varepsilon
u_\varepsilon(x)|
\!+\!|s_\varepsilon v_0(x)||t_\varepsilon
u_\varepsilon(x)|^{4}\right)\right]}
{|x-y|}\mathrm{d}x \mathrm{d}y
\nonumber\\&\quad+\int_{\mathbb{R}^3}\int_{\mathbb{R}^3}\frac{
|t_\varepsilon u_\varepsilon(y)|^{5}
\left[|s_\varepsilon v_0(x)|^{5}\!+\!|t_\varepsilon
u_\varepsilon(x)|^{5}\!-\!C_2\left(|s_\varepsilon
v_0(x)|^{4}|t_\varepsilon u_\varepsilon(x)|\!+\!|s_\varepsilon
v_0(x)||t_\varepsilon
u_\varepsilon(x)|^{4}\right)\right]}{|x-y|}\mathrm{d}x
\mathrm{d}y
\nonumber\\&\quad-C_1\int_{\mathbb{R}^3}\int_{\mathbb{R}^3}\frac{
\left(|s_\varepsilon v_0(y)|^{4}|t_\varepsilon
u_\varepsilon(y)|+|s_\varepsilon v_0(y)||t_\varepsilon
u_\varepsilon(y)|^{4}\right)
|s_\varepsilon v_0(x)-t_\varepsilon u_\varepsilon(x)|^{5}}
{|x-y|}\mathrm{d}x \mathrm{d}y
\nonumber\\&\ge
\int_{\mathbb{R}^{3}}\phi_{s_\varepsilon v_0}|s_\varepsilon
v_0|^{5}\mathrm{d}x
+\int_{\mathbb{R}^{3}}\phi_{t_\varepsilon
u_\varepsilon}|t_\varepsilon u_\varepsilon|^{5}\mathrm{d}x
\nonumber\\&\
-C_2\int_{\mathbb{R}^3}\int_{\mathbb{R}^3}\frac{|s_\varepsilon
v_0(y)|^{5}
\left(|s_\varepsilon v_0(x)|^{4}|t_\varepsilon
u_\varepsilon(x)|+|s_\varepsilon v_0(x)||t_\varepsilon
u_\varepsilon(x)|^{4}\right)}{|x-y|}\mathrm{d}x \mathrm{d}y
\nonumber\\&\
-C_2\int_{\mathbb{R}^3}\int_{\mathbb{R}^3}\frac{|t_\varepsilon
u_\varepsilon(y)|^{5}
\left(|s_\varepsilon v_0(x)|^{4}|t_\varepsilon
u_\varepsilon(x)|+|s_\varepsilon v_0(x)||t_\varepsilon
u_\varepsilon(x)|^{4}\right)}{|x-y|}\mathrm{d}x \mathrm{d}y
\nonumber\\&\
-C_1\int_{\mathbb{R}^3}\int_{\mathbb{R}^3}\frac{
\left(|s_\varepsilon v_0(y)|^{4}|t_\varepsilon
u_\varepsilon(y)|+|s_\varepsilon v_0(y)||t_\varepsilon
u_\varepsilon(y)|^{4}\right)
|s_\varepsilon v_0(x)-t_\varepsilon
u_\varepsilon(x)|^{5}}{|x-y|}\mathrm{d}x \mathrm{d}y.
\end{align}
With the help of the following inequality $|a-b|^r\le
2^{r-1}(|a|^r+|b|^r)$, \eqref{9.4} turns into
\begin{align}\label{9.3}
&\int_{\mathbb{R}^{3}}\phi_{s_\varepsilon v_0-t_\varepsilon
u_\varepsilon}|s_\varepsilon v_0-t_\varepsilon
u_\varepsilon|^{5}\mathrm{d}x
\nonumber\\&\ge
\int_{\mathbb{R}^{3}}\phi_{s_\varepsilon v_0}|s_\varepsilon
v_0|^{5}\mathrm{d}x
+\int_{\mathbb{R}^{3}}\phi_{t_\varepsilon
u_\varepsilon}|t_\varepsilon u_\varepsilon|^{5}\mathrm{d}x
\nonumber\\&\quad
-C_3\int_{\mathbb{R}^3}\int_{\mathbb{R}^3}\frac{|s_\varepsilon
v_0(y)|^{5}
\left(|s_\varepsilon v_0(x)|^{4}|t_\varepsilon
u_\varepsilon(x)|+|s_\varepsilon v_0(x)||t_\varepsilon
u_\varepsilon(x)|^{4}\right)}{|x-y|}\mathrm{d}x \mathrm{d}y
\nonumber\\&\quad
-C_4\int_{\mathbb{R}^3}\int_{\mathbb{R}^3}\frac{|t_\varepsilon
u_\varepsilon(y)|^{5}
\left(|s_\varepsilon v_0(x)|^{4}|t_\varepsilon
u_\varepsilon(x)|+|s_\varepsilon v_0(x)||t_\varepsilon
u_\varepsilon(x)|^{4}\right)}{|x-y|}\mathrm{d}x \mathrm{d}y.
\end{align}
According to \eqref{3.766}, \eqref{3.86}, \eqref{9.3},
Hardy-Littlewood-Sobolev inequality (see Proposition
\ref{prohlsi}) and the boundedness of
$s_\varepsilon,t_\varepsilon$, we conclude that as
$\varepsilon\to 0$,
\begin{align}\label{3.156}
\Pi_6&=\frac{\lambda}{10}\int_{\mathbb{R}^{3}}
\left(\phi_{s_\varepsilon v_0-t_\varepsilon
u_\varepsilon}|s_\varepsilon v_0-t_\varepsilon u_\varepsilon|^{5}
-\phi_{s_\varepsilon v_0}|s_\varepsilon v_0|^{5}
-\phi_{t_\varepsilon u_\varepsilon}|t_\varepsilon
u_\varepsilon|^{5}\right)\mathrm{d}x
\nonumber\\&\quad+
C_1\int_{\mathbb{R}^3}\int_{\mathbb{R}^3}\frac{|s_\varepsilon
v_0(y)|^{5}
\left(|s_\varepsilon v_0(x)|^{4}|t_\varepsilon
u_\varepsilon(x)|+|s_\varepsilon v_0(x)||t_\varepsilon
u_\varepsilon(x)|^{4}\right)}{|x-y|}\mathrm{d}x \mathrm{d}y
\nonumber\\&\quad
+C_2\int_{\mathbb{R}^3}\int_{\mathbb{R}^3}\frac{|t_\varepsilon
u_\varepsilon(y)|^{5}
\left(|s_\varepsilon v_0(x)|^{4}|t_\varepsilon
u_\varepsilon(x)|+|s_\varepsilon v_0(x)||t_\varepsilon
u_\varepsilon(x)|^{4}\right)}{|x-y|}\mathrm{d}x \mathrm{d}y
\nonumber\\&\le
C_3\left(|v_0|_{6}^{5}
|v_0^{4}u_\varepsilon|_{\frac{6}{5}}
+|v_0|_{6}^{5} |v_0 u_\varepsilon^{4}|_{\frac{6}{5}}\right)
+C_4\left(|u_\varepsilon|_6^5 |v_0^4u_\varepsilon|_{\frac{6}{5}}
+|u_\varepsilon|_6^5
|v_0u_\varepsilon^4|_{\frac{6}{5}}\right)
\nonumber\\& \le C_5 |u_\varepsilon|_{\frac{6}{5}}
+C_6|u_\varepsilon|_{\frac{24}{5}}^{4}
\le C_7\left(\int_{|x|\le
2r_0}|u_\varepsilon|^{\frac{12}{5}}\mathrm{d}x\right)^{\frac{5}{12}}
+C_8 \varepsilon^{\frac{1}{4}}
\nonumber\\&\le C_9 \varepsilon^{\frac{1}{4}}.
\end{align}
So, substituting \eqref{4.116}$-$\eqref{3.146} and \eqref{3.156}
into \eqref{9.5}, by Lemma \ref{lam-64} and $q\in(5,6)$, we
obtain
\begin{equation}\label{6.86}
m_\lambda
\le\mathcal{I}_\lambda(s_\varepsilon v_0-t_\varepsilon
u_\varepsilon)
\le \mathcal{I}_\lambda(v_0)+c^*_\lambda
+C_1\varepsilon^{\frac{1}{4}}
-C_2\varepsilon^{\frac{6-q}{4}}
<c_\lambda+c^*_\lambda,
\end{equation}
as $\varepsilon\to 0$. This completes the proof.
\end{proof}

\subsection{The $(\mathrm{PS})_{m_\lambda}$ condition}

\noindent In what follows, we will show that the functional
$\mathcal{I}_\lambda$ satisfies the $(\mathrm{PS})_{m_\lambda}$
condition.

\begin{lem}\label{lam-116}
Assume that there exists $\lambda^*<0$ such that for all $\lambda\in(\lambda^*,0)$,
$\{u_n\}\subset U_\lambda$ satisfying
\begin{equation*}
\mathcal{I}_\lambda(u_n)\rightarrow m_\lambda\in\left(0,c_\lambda+c^*_\lambda\right),\quad
\mathcal{I}_\lambda'(u_n)\rightarrow0
\end{equation*}
contains a convergent subsequence.
\end{lem}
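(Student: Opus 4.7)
The plan is to leverage the sharp upper bound $m_\lambda<c_\lambda+c^*_\lambda$ from Lemma \ref{lam-86} to rule out the concentration of bubbles along the PS sequence. First I would show boundedness of $\{u_n\}$ in $H_r^1(\mathbb{R}^3)$: forming $\mathcal{I}_\lambda(u_n)-\tfrac{1}{q}\langle\mathcal{I}_\lambda'(u_n),u_n\rangle$, since $q\in(5,6)$ all three coefficients $\tfrac{1}{2}-\tfrac{1}{q}$, $\tfrac{1}{q}-\tfrac{1}{10}$, $\tfrac{1}{q}-\tfrac{1}{6}$ are strictly positive, and the $\lambda<0$ nonlocal contribution is nonnegative, yielding a uniform bound on $\|u_n\|$. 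After extracting a subsequence, Proposition \ref{lam-32} gives $u_n\rightharpoonup u_\lambda$ in $H_r^1$ with $u_n\to u_\lambda$ in $L^p$ for all $p\in[2,6)$ and pointwise a.e. Passing to the limit using Proposition \ref{prop-1}(5) for the nonlocal term, I would obtain $\mathcal{I}_\lambda'(u_\lambda)=0$.

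Setting $w_n:=u_n-u_\lambda$, I would then apply Brezis--Lieb to the $L^6$ and Dirichlet integrals, Proposition \ref{prop-1}(5) to the Hardy--Littlewood--Sobolev integral, and compact embedding to the $L^q$ term. Writing $A:=\lim\int|\nabla w_n|^2$, $B:=\lim\int\phi_{w_n}|w_n|^5$, $C:=\lim\int|w_n|^6$, the PS relations give $A+\lambda B-C=0$ and
\[m_\lambda=\mathcal{I}_\lambda(u_\lambda)+E_\infty,\qquad E_\infty:=\tfrac{A}{2}+\tfrac{\lambda B}{10}-\tfrac{C}{6}=\tfrac{A}{3}-\tfrac{\lambda B}{15}\ge 0.\]
The goal becomes $A=B=C=0$, which combined with $u_n\to u_\lambda$ in $L^2$ upgrades to strong convergence in $H_r^1$.

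Next I would distinguish three cases for the weak limit. If $u_\lambda^\pm\neq 0$, then $u_\lambda\in\mathcal{M}_\lambda$, whence $\mathcal{I}_\lambda(u_\lambda)\ge m_\lambda$, which forces $E_\infty\le 0$ and hence $E_\infty=0$, giving $A=B=C=0$. If $u_\lambda\neq 0$ is one-signed, then $u_\lambda\in\mathcal{N}_\lambda$ and $\mathcal{I}_\lambda(u_\lambda)\ge c_\lambda$ by Lemma \ref{lam-64}, so by Lemma \ref{lam-86} one has $E_\infty\le m_\lambda-c_\lambda<c^*_\lambda$; to close this case I must establish the matching lower bound $E_\infty\ge c^*_\lambda$ whenever $C>0$. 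The latter follows by applying the Young-type inequality from the proof of Theorem \ref{thm-3} (with $\tau^2=(-\lambda)^{-1/2}$) to $w_n$, using $A+\lambda B=C$ and the Sobolev bound $A\ge SC^{1/3}$ to reduce to the same one-variable optimization behind Lemma \ref{lam-gt6}. The remaining case $u_\lambda=0$ forces $w_n=u_n$ to remain sign-changing with $\|u_n^\pm\|$ bounded below (since $\{u_n\}\subset U_\lambda$, as in the proof of Lemma \ref{lam-126}(3)); applying the single-bubble lower bound separately to the disjointly supported $u_n^\pm$ and combining via the decomposition $E_\infty=E_\infty^++E_\infty^-+\tfrac{\lambda}{5}\lim\int\phi_{u_n^+}|u_n^-|^5$ yields $E_\infty\ge 2c^*_\lambda+O(\lambda)$ for $|\lambda|$ small. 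Since $c_\lambda\to c_0<S^{3/2}/3$ while $c^*_\lambda\to S^{3/2}/3$ as $\lambda\to 0^-$, choosing $\lambda^*<0$ close enough to $0$ guarantees $c_\lambda<c^*_\lambda$ throughout $(\lambda^*,0)$, and then $m_\lambda<c_\lambda+c^*_\lambda<2c^*_\lambda\le E_\infty=m_\lambda$ is a contradiction.

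The hard part will be the sharp concentration lower bound $E_\infty\ge c^*_\lambda$ together with its two-bubble refinement in the third case. Because the critical Sobolev term and the critical Hardy--Littlewood--Sobolev term compete inside a single bubble, the classical Sobolev estimate alone is not sufficient; one must faithfully mimic the optimization that defines $c^*_\lambda$ via Lemma \ref{lam-gt6}, exploiting both $A+\lambda B=C$ and the Poisson-based Young inequality simultaneously. Controlling the cross-integral $\int\phi_{u_n^+}|u_n^-|^5$ in the case $u_\lambda=0$, and ensuring that $c_\lambda<c^*_\lambda$, is exactly what forces the smallness restriction $\lambda\in(\lambda^*,0)$ that appears in the statement.
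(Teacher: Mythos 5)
Your overall strategy is the paper's: boundedness, weak limit $u_\lambda$ with $\mathcal{I}_\lambda'(u_\lambda)=0$, decomposition of the deficit energy into $E_\infty$, and rule out loss of compactness by comparing $E_\infty$ against $c^*_\lambda$ under the threshold $m_\lambda<c_\lambda+c^*_\lambda$. Your observation that the case $u_\lambda^\pm\neq0$ closes immediately via $u_\lambda\in\mathcal{M}_\lambda\Rightarrow\mathcal{I}_\lambda(u_\lambda)\ge m_\lambda\Rightarrow E_\infty\le0$ is a genuine (if small) simplification that the paper does not use: the paper lumps $u_\lambda\neq0$ together, invokes only $\mathcal{I}_\lambda(u_\lambda)\ge c_\lambda$, and applies the bubble lower bound uniformly. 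That said, two specific steps in your sketch would fail as written.

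First, the parameter $\tau^2=(-\lambda)^{-1/2}$ in the Poisson--Young inequality is the wrong choice for $\lambda<0$. It is borrowed from the proof of Theorem~\ref{thm-3} (valid for $\lambda>0$), but for $\lambda<0$ it produces, from $A+\lambda B=C\le\frac{B}{2\tau^2}+\frac{\tau^2}{2}A$, a \emph{positive} coefficient bound on $\lambda B$, i.e.\ something weaker than the trivial $\lambda B\le 0$. One must instead optimize over $\tau$: writing $a:=\lambda B$ and $l:=A$, the inequality gives $a\le\frac{\frac{\tau^2}{2}-1}{1-\frac{1}{2\lambda\tau^2}}l$, and minimizing the coefficient over $\tau^2\in(0,2)$ leads to $\tau^2=\frac{1-\sqrt{1-4\lambda}}{2\lambda}$ — the paper's choice — which, together with the Sobolev--HLS lower bound $l^2\ge\frac{1-\sqrt{1-4\lambda}}{2\lambda}S^3$ (from $l\le S^{-3}l^3-\lambda S^{-6}l^5$), reproduces exactly $c^*_\lambda$.

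Second, the two-bubble refinement in the case $u_\lambda=0$ is over-engineered and has a real gap: the test with $u_n^\pm$ does \emph{not} yield the clean single-bubble constraints $A^\pm+\lambda B^\pm=C^\pm$, but rather $A^\pm+\lambda B^\pm+\lambda B^{\mathrm{cross}}=C^\pm$ with $B^{\mathrm{cross}}=\lim\int\phi_{u_n^+}|u_n^-|^5$, so Lemma~\ref{lam-gt6} does not apply directly to each piece, and the ``$+O(\lambda)$'' correction is not quantified. The paper avoids this entirely: using $\{u_n\}\subset U_\lambda$ to get $\Lambda_2\le\|u_n^\pm\|\le\Lambda_1$, plus Sobolev and HLS, one obtains the scalar inequality $\lim\|u_n^\pm\|^2\le -\lambda C_2S^{-5}\lim\|u_n^\pm\|^{10}+S^{-3}\lim\|u_n^\pm\|^6$, hence $\lim\|u_n^\pm\|^2\ge t_*(\lambda)$ with $t_*(\lambda)\to S^{3/2}$ as $\lambda\to 0^-$, and then $m_\lambda\ge\frac13\lim\|u_n^+\|^2+\frac13\lim\|u_n^-\|^2\ge\frac23 t_*(\lambda)$, which for $\lambda$ near $0^-$ exceeds $c_\lambda+c^*_\lambda\to c_0+\frac13 S^{3/2}<\frac23 S^{3/2}$. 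This elementary estimate — not a sharp two-bubble decomposition — is the actual source of the smallness restriction $\lambda\in(\lambda^*,0)$. If you insist on the bubble route you must explicitly control $B^{\mathrm{cross}}$, which is strictly harder and yields nothing extra here.
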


\begin{proof}
It obtains from $\mathcal{I}_\lambda(u_n)\rightarrow m_\lambda$,
$\mathcal{I}_\lambda'(u_n)\rightarrow 0$ and $q\in(2,6)$ that
\begin{align*}
m_\lambda+1+\|u_n\|
&\ge\mathcal{I}_\lambda(u_n)-\frac{1}{q}\langle
\mathcal{I}_\lambda' (u_n),u_n\rangle
\nonumber\\&=\left(\frac{1}{2}-\frac{1}{q}\right)\|u_n\|^2
+\left(\frac{\lambda}{10}-\frac{\lambda}{q}\right)
\int_{\mathbb{R}^{3}}\phi_{u_n}|u_n|^{5}\mathrm{d}x
+\left(\frac{1}{q}-\frac{1}{6}\right)\int_{\mathbb{R}^{3}}|u_n|^{6}\mathrm{d}x
\\&\ge\left(\frac{1}{2}-\frac{1}{q}\right)\|u_n\|^2,
\end{align*}
which indicates that $\{u_n\}$ is bounded in
$H_r^1(\mathbb{R}^3)$. Then, up to a subsequence if necessary,
still denoted by $\{u_n\}$, we assume that there exists $u\in
H_r^1(\mathbb{R}^3)$ such that for any $r\in[2,6)$,
\begin{equation}\label{9.13}
u_n\rightharpoonup u\ \mathrm{in}\ H_r^1(\mathbb{R}^3),\ \
u_n\rightarrow u\ \mathrm{in}\ L^r(\mathbb{R}^{3}),\ \
u_n(x)\rightarrow u(x)\ \mathrm{a.e.}\ \mathrm{in} \
\mathbb{R}^{3}.
\end{equation}

Firstly, we prove that $\mathcal{I}_\lambda'(u)=0$. It suffices
to verify that $\langle \mathcal{I}_\lambda'(u),\psi\rangle=0$
for all $\psi\in \mathcal{C}_0^\infty(\mathbb{R}^3)$. Observe
that
\begin{align}\label{004}
\langle \mathcal{I}_\lambda'({{u_n}})
-\mathcal{I}_\lambda'({{u}}),\psi\rangle
&=\langle u_n-u,\psi\rangle
+\lambda\int_{\mathbb{R}^{3}}
\left(\phi_{u_n}|u_n|^3u_n-\phi_{u}|u|^3u\right)\psi
\mathrm{d}x
\nonumber\\&\quad
-\int_{\mathbb{R}^{3}}
\left(|u_n|^4u_n-|u|^4u\right)\psi
\mathrm{d}x
-\int_{\mathbb{R}^{3}}
\left(|u_n|^{q-2}u_n-|u|^{q-2}u\right)\psi
\mathrm{d}x.
\end{align}
In view of $u_n\rightharpoonup u$ in $H_r^1(\mathbb{R}^3)$, then
$\langle u_n-u,\psi\rangle\rightarrow 0$. By H\"{o}lder's
inequality and $|a^m-b^m|\le L\max\{a^{m-1},b^{m-1}\}|a-b|$ for
$a,b\ge 0$, $m\ge 1$ and some $L>0$, there hold
\begin{align}\label{9.12}
\left|\int_{\mathbb{R}^{3}}(|u_n|^4u_n-|u|^4u)\psi\mathrm{d}x\right|
&\le \int_{\mathbb{R}^{3}}|u_n|^{4}|u_n-u||\psi|\mathrm{d}x
+\int_{\mathbb{R}^{3}}\Big||u_n|^4-|u|^4\Big||u\psi|\mathrm{d}x
\nonumber\\&\le |\psi|_\infty
|u_n|_5^4\left(\int_{\mathrm{supp}
\psi}|u_n-u|^{5}\mathrm{d}x\right)^{\frac{1}{5}}
\nonumber\\&\quad + C |\psi|_\infty
|u|_5\left(|u_n|_5^3+|u|_5^3\right)\left(\int_{\mathrm{supp}
\psi}|u_n-u|^{5}\mathrm{d}x\right)^{\frac{1}{5}}\rightarrow 0,
\end{align}
as $n\to\infty$. Similarly,
\begin{equation}\label{9.11}
\left|
\int_{\mathbb{R}^{3}}
(|u_n|^{q-2}u_n-|u|^{q-2}u)\psi
\mathrm{d}x
\right|
\rightarrow 0,\ \ \mathrm{as}\ n\to\infty.
\end{equation}
For this, it remains to prove that
\begin{align}\label{67}
\int_{\mathbb{R}^{3}}
\left(\phi_{u_n}|u_n|^3u_n-\phi_{u}|u|^3u\right)\psi
\mathrm{d}x
\rightarrow 0,\ \ \mathrm{as}\ n\to\infty.
\end{align}
In deed, by Proposition \ref{prop-1}$(5)$, we get
$\phi_{u_n}\rightharpoonup \phi_u$ in
$\mathcal{D}^{1,2}(\mathbb{R}^{3})$ and so
 $\phi_{u_n}\rightharpoonup \phi_u$ in $L^{6}(\mathbb{R}^{3})$.
 Then
\begin{align}\label{9.10}
\int_{\mathbb{R}^{3}}
\left(\phi_{u_n}-\phi_{u}\right)|u|^3u\psi
\mathrm{d}x
\rightarrow 0,\ \ \mathrm{as}\ n\to\infty.
\end{align}
Since $u_n\to u$ a.e. in $\mathbb{R}^3$ and
\begin{align*}
\int_{\mathbb{R}^{3}}
\left|\phi_{u_n}\left(|u_n|^3u_n-|u|^3u\right)\right|^{\frac{6}{5}}
\mathrm{d}x
&=\int_{\mathbb{R}^{3}}
|\phi_{u_n}|^{\frac{6}{5}}\big|u+\theta(u_n-u)\big|^{\frac{18}{5}}
\left|u_n-u\right|^{\frac{6}{5}}
\mathrm{d}x
\\&\le C_1 |\phi_{u_n}|_6^{\frac{6}{5}}
\big||u_n|_6+|u|_6\big|^{\frac{18}{5}}
\left|u_n-u\right|_6^{\frac{6}{5}}
\\&\le C_2,
\end{align*}
where $0<\theta<1$, we see that
$\phi_{u_n}\left(|u_n|^3u_n-|u|^3u\right)\rightharpoonup 0$ in
$L^{\frac{6}{5}}(\mathbb{R}^3)$ and thus
\begin{align*}
\int_{\mathbb{R}^{3}}
\phi_{u_n}\left(|u_n|^3u_n-|u|^3u\right)\psi
\mathrm{d}x
\rightarrow 0,\ \ \mathrm{as}\ n\to\infty,
\end{align*}
this together with \eqref{9.10}, we conclude that \eqref{67}
holds. Substituting \eqref{9.12}$-$\eqref{67} into \eqref{004},
using the fact that $u_n\rightharpoonup u$ in
$H_r^1(\mathbb{R}^3)$, it holds that
$$\langle \mathcal{I}_\lambda'(u),\psi\rangle
=\lim\limits_{n\to \infty}\langle
\mathcal{I}_\lambda'(u_n),\psi\rangle=0,$$
for any $\psi\in \mathcal{C}_0^\infty(\mathbb{R}^3)$, which means
that
$\mathcal{I}_\lambda'(u)=0$.

Secondly, we will show that $u\neq 0$. Suppose by contradiction
that $u\equiv 0$, that is, $u^+\equiv 0$ and $u^-\equiv 0$. By applying \eqref{2.3}, \eqref{9.13}, $\mathcal{I}_\lambda'(u_n)\rightarrow0$ and the
Hardy-Littlewood-Sobolev inequality (see Proposition
\ref{prohlsi}), we infer that
\begin{align*}
\lim\limits_{n\to \infty}\|u_n^+\|^2
&=-\lambda\lim\limits_{n\to \infty}
\int_{\mathbb{R}^{3}}\phi_{u_n} |u_n^+|^5\mathrm{d}x
+\lim\limits_{n\to
\infty}\int_{\mathbb{R}^{3}}|u_n^+|^{6}\mathrm{d}x
+\lim\limits_{n\to
\infty}\int_{\mathbb{R}^{3}}|u_n^+|^{q}\mathrm{d}x
\nonumber\\&=-\lambda\lim\limits_{n\to \infty}
\int_{\mathbb{R}^{3}}\phi_{u_n^+} |u_n^+|^5\mathrm{d}x
-\lambda\lim\limits_{n\to \infty}
\int_{\mathbb{R}^{3}}\phi_{u_n^-} |u_n^+|^5\mathrm{d}x
+\lim\limits_{n\to
\infty}\int_{\mathbb{R}^{3}}|u_n^+|^{6}\mathrm{d}x
\nonumber\\&\le -\lambda C_1 \lim\limits_{n\to\infty}
|u_n^+|_6^{10}
-\lambda C_1 \lim\limits_{n\to\infty}
|u_n^-|_6^5|u_n^+|_6^5
+\lim\limits_{n\to \infty}|u_n^+|^{6}_6
\nonumber\\&\le -\lambda C_1S^{-5}
\lim\limits_{n\to\infty}\|u_n^+\|^{10}
-\lambda C_1 S^{-5}\lim\limits_{n\to\infty}\|u_n^-\|^5\|u_n^+\|^5
+S^{-3}\lim\limits_{n\to \infty}\|u_n^+\|^{6}.
\end{align*}
It gets from Lemma \ref{lam-126}$(3)$ that
$\Lambda_2\le\lim\limits_{n\to \infty}\|u_{n}^\pm\|\le \Lambda_1$. Thus, the above
inequality turns into
\begin{equation*}
0<\lim\limits_{n\to \infty}\|u_n^+\|^2
\le -\lambda C_2 S^{-5} \lim\limits_{n\to\infty}\|u_n^+\|^{10}
+S^{-3}\lim\limits_{n\to \infty}\|u_n^+\|^{6}.
\end{equation*}
Let $0<t:=\lim\limits_{n\to \infty}\|u_n^+\|^2$, then
$0<t\le -\lambda C_2 S^{-5}t^5+S^{-3}t^3$. A simple calculation
yields that
$$t^2\ge t_*^2:=\frac{-S^{-3}+\sqrt{S^{-6}-4\lambda
C_2S^{-5}}}{-2\lambda C_2 S^{-5}},$$
that is,
\begin{equation}\label{9.14}
\lim\limits_{n\to \infty}\|u_n^+\|^2
\ge t_*
=\left(\frac{-S^{-3}+\sqrt{S^{-6}-4\lambda C_2S^{-5}}}{-2\lambda
C_2 S^{-5}}\right)^{\frac{1}{2}}.
\end{equation}
Similarly,
\begin{equation}\label{9.15}
\lim\limits_{n\to \infty}\|u_n^-\|^2
\ge t_*
=\left(\frac{-S^{-3}+\sqrt{S^{-6}-4\lambda C_2S^{-5}}}{-2\lambda
C_2 S^{-5}}\right)^{\frac{1}{2}}.
\end{equation}
Then, we derive from $\lambda<0$, \eqref{9.13}, \eqref{9.14} and
\eqref{9.15} that
\begin{align}\label{9.16}
m_\lambda&=\lim\limits_{n\to \infty}\mathcal{I}_\lambda(u_n)
\nonumber\\&=\frac{1}{2}\lim\limits_{n\to \infty}\|u_n\|^2
+\frac{\lambda}{10}\lim\limits_{n\to \infty}
\int_{\mathbb{R}^{3}}\phi_{u_n} |u_n|^5\mathrm{d}x
-\frac{1}{6}\lim\limits_{n\to \infty}
\int_{\mathbb{R}^{3}}|u_n|^{6}\mathrm{d}x
-\frac{1}{q}\lim\limits_{n\to \infty}
\int_{\mathbb{R}^{3}}|u_n|^{q}\mathrm{d}x
\nonumber\\&=\frac{1}{2}\lim\limits_{n\to \infty}\|u_n\|^2
+\frac{\lambda}{10}\lim\limits_{n\to \infty}
\int_{\mathbb{R}^{3}}\phi_{u_n} |u_n|^5\mathrm{d}x
-\frac{1}{6}\lim\limits_{n\to \infty}
\int_{\mathbb{R}^{3}}|u_n|^{6}\mathrm{d}x
\nonumber\\&= \frac{1}{2}\lim\limits_{n\to \infty}\|u_n\|^2
+\left(\frac{\lambda}{10}-\frac{\lambda}{6}\right)
\lim\limits_{n\to \infty}
\int_{\mathbb{R}^{3}}\phi_{u_n} |u_n|^5\mathrm{d}x
\nonumber\\&\quad+\frac{1}{6}
\left(\lim\limits_{n\to \infty}
\lambda\int_{\mathbb{R}^{3}}\phi_{u_n} |u_n|^5\mathrm{d}x
-\lim\limits_{n\to \infty}
\int_{\mathbb{R}^{3}}|u_n|^{6}\mathrm{d}x\right)
\nonumber\\&= \frac{1}{2}\lim\limits_{n\to \infty}\|u_n\|^2
+\left(\frac{\lambda}{10}-\frac{\lambda}{6}\right)
\lim\limits_{n\to \infty}
\int_{\mathbb{R}^{3}}\phi_{u_n} |u_n|^5\mathrm{d}x
-\frac{1}{6}\lim\limits_{n\to \infty}\|u_n\|^2
\nonumber\\&\ge \frac{1}{2}\lim\limits_{n\to \infty}\|u_n\|^2
-\frac{1}{6}\lim\limits_{n\to \infty}\|u_n\|^2
\nonumber\\&= \frac{1}{3}\lim\limits_{n\to \infty}\|u_n\|^2
= \frac{1}{3}\lim\limits_{n\to \infty}\|u_n^+\|^2
+\frac{1}{3}\lim\limits_{n\to \infty}\|u_n^-\|^2
\nonumber\\&\ge \frac{2}{3}t_*
\ge
\frac{2}{3}\left(\frac{-S^{-3}+\sqrt{S^{-6}-4\lambda
C_2S^{-5}}}{-2\lambda C_2 S^{-5}}\right)^{\frac{1}{2}}.
\end{align}
As $\lambda\to 0^-$, we see  that
\begin{align*}
\lim\limits_{\lambda\to 0^-}
\frac{-S^{-3}+\sqrt{S^{-6}-4\lambda C_2S^{-5}}}{-2\lambda C_2
S^{-5}}
&=\lim\limits_{\lambda\to 0^-}
\frac{-2 C_2S^{-5}\left(S^{-6}-4\lambda
C_2S^{-5}\right)^{-\frac{1}{2}}}{-2 C_2 S^{-5}}
\\&=\lim\limits_{\lambda\to 0^-} \left(S^{-6}-4\lambda
C_2S^{-5}\right)^{-\frac{1}{2}}
\\&= S^{3}.
\end{align*}
Therefore, as $\lambda\to 0^-$, we infer from \eqref{9.16} and
the above inequality that
$$c_0+c_0^*>m_0\ge \frac{2}{3}S^{\frac{3}{2}},$$
where $c_0^*=\frac{1}{3}S^{\frac{3}{2}}$ and
$c_0<\frac{1}{3}S^{\frac{3}{2}}$ (obtained in Remark
\ref{rem-16}), which leads to a contradiction. Hence, there exists $\lambda^*<0$ such that for all $\lambda\in(\lambda^*,0)$, we infer that  $u^+\not\equiv 0$ and $u^-\not\equiv 0$, that is, $u\neq 0$. Which together with $\mathcal{I}_\lambda'(u)=0$, it holds that $u\in \mathcal{N}_\lambda$ and $\mathcal{I}_\lambda(u)\ge c_\lambda$.

Lastly, we will prove that $u_n\to u$ in $H^1_r(\mathbb{R}^3)$.
Denote $v_n:=u_n-u$, it yields from Br\'{e}zis-Lieb lemma (see
\cite[Theorem 1]{BH83}), Proposition \ref{prop-1}($5$) and the
compactness embedding of $H_r^1(\mathbb{R}^3)\hookrightarrow
L^r(\mathbb{R}^3)$ for $r\in[2,6)$ that
\begin{align}\label{4.306}
m_\lambda&=\mathcal{I}_\lambda(u_n)+o(1)
\nonumber\\&=\frac{1}{2}\|u_n\|^2
+\frac{\lambda}{10}\int_{\mathbb{R}^{3}}\phi_{u_n}
|u_n|^5\mathrm{d}x
-\frac{1}{6}\int_{\mathbb{R}^{3}}|u_n|^{6}\mathrm{d}x
-\frac{1}{q}\int_{\mathbb{R}^{3}}|u_n|^{q}\mathrm{d}x+o(1)\nonumber\\&
=\frac{1}{2}\|u\|^2+\frac{1}{2}\|v_n\|^2
+\frac{\lambda}{10}\int_{\mathbb{R}^{3}}\phi_{u} |u|^5\mathrm{d}x
+\frac{\lambda}{10}\int_{\mathbb{R}^{3}}\phi_{v_n}
|v_n|^5\mathrm{d}x
\nonumber\\&\quad-\frac{1}{6}\int_{\mathbb{R}^{3}}|u|^{6}\mathrm{d}x
-\frac{1}{6}\int_{\mathbb{R}^{3}}|v_n|^{6}\mathrm{d}x
-\frac{1}{q}\int_{\mathbb{R}^{3}}|u|^{q}\mathrm{d}x+o(1)\nonumber\\
&=\mathcal{I}_\lambda(u)+\frac{1}{2}\|v_n\|^2
+\frac{\lambda}{10}\int_{\mathbb{R}^{3}}\phi_{v_n}
|v_n|^5\mathrm{d}x
-\frac{1}{6}\int_{\mathbb{R}^{3}}|v_n|^{6}\mathrm{d}x+o(1),
\end{align}
and
\begin{align}\label{4.316}
0&=\langle \mathcal{I}_\lambda'(u_n),u_n\rangle+o(1)\nonumber\\&
=\|u_n\|^2
+\lambda\int_{\mathbb{R}^{3}}\phi_{u_n} |u_n|^5\mathrm{d}x
-\int_{\mathbb{R}^{3}}|u_n|^{6}\mathrm{d}x
-\int_{\mathbb{R}^{3}}|u_n|^{q}\mathrm{d}x+o(1)\nonumber\\&
=\|u\|^2+\|v_n\|^2
+\lambda\int_{\mathbb{R}^{3}}\phi_{u} |u|^5\mathrm{d}x
+\lambda\int_{\mathbb{R}^{3}}\phi_{v_n} |v_n|^5\mathrm{d}x
\nonumber\\&\quad-\int_{\mathbb{R}^{3}}|u|^{6}\mathrm{d}x
-\int_{\mathbb{R}^{3}}|v_n|^{6}\mathrm{d}x\nonumber
-\int_{\mathbb{R}^{3}}|u|^{q}\mathrm{d}x+o(1)
\nonumber\\&=\langle \mathcal{I}_\lambda'(u),u\rangle
+\|v_n\|^2
+\lambda\int_{\mathbb{R}^{3}}\phi_{v_n} |v_n|^5\mathrm{d}x
-\int_{\mathbb{R}^{3}}|v_n|^{6}\mathrm{d}x+o(1)
\nonumber\\&=\|v_n\|^2
+\lambda\int_{\mathbb{R}^{3}}\phi_{v_n} |v_n|^5\mathrm{d}x
-\int_{\mathbb{R}^{3}}|v_n|^{6}\mathrm{d}x+o(1).
\end{align}
If $v_n\rightarrow0$ in $H_r^1(\mathbb{R}^3)$, the proof of Lemma
\ref{lam-116} is completed. So we suppose by contradiction that
$v_n\rightharpoonup0$ and $v_n\nrightarrow0$ in
$H_r^1(\mathbb{R}^3)$. Then by \eqref{4.316}, we may assume that,
for $n$ large enough,
\begin{align}\label{5.76}
\|v_n\|^2\rightarrow l,\quad
\lambda\int_{\mathbb{R}^{3}}\phi_{v_n} |v_n|^5\mathrm{d}x\to
a,\quad
\int_{\mathbb{R}^{3}}|v_n|^{6}\mathrm{d}x\rightarrow b.
\end{align}
Notice that for any $\tau>0$, by $-\Delta \phi_{v_n}=|v_n|^5$, it
holds that
\begin{align*}
\int_{\mathbb{R}^3}|v_n|^6\mathrm{d}x
=\int_{\mathbb{R}^3}-\Delta \phi_{v_n}|v_n|\mathrm{d}x
&\le \frac{1}{2\tau^{2}}\int_{\mathbb{R}^3}|\nabla
\phi_{v_n}|^2\mathrm{d}x
+\frac{\tau^{2}}{2}\int_{\mathbb{R}^3}|\nabla |v_n||^2\mathrm{d}x
\\&\le
\frac{1}{2\tau^{2}}\int_{\mathbb{R}^3}\phi_{v_n}|v_n|^5\mathrm{d}x
+\frac{\tau^{2}}{2}\int_{\mathbb{R}^3}|\nabla v_n|^2\mathrm{d}x.
\end{align*}
Thus, passing to the limit as $n\to \infty$, it holds that
\begin{equation}\label{6.36}
a+l=b\le \frac{a}{2\lambda\tau^{2}}
+\frac{\tau^{2}}{2}l.
\end{equation}
With the help of $\lambda<0$, choosing
\[
\tau^2=\frac{1-\sqrt{1-4\lambda}}{2\lambda}>0,
 \]
which together with \eqref{6.36} implies that
$a\leq \frac{-2 \lambda+1-\sqrt{1-4\lambda }}{2\lambda }l$. It
follows from \eqref{4.306}$-$\eqref{5.76} that
\begin{align}\label{5.86}
\frac{1}{2}\|v_n\|^2
+\frac{\lambda}{10}\int_{\mathbb{R}^{3}}\phi_{v_n}
|v_n|^5\mathrm{d}x
-\frac{1}{6}\int_{\mathbb{R}^{3}}|v_n|^{6}\mathrm{d}x+o(1)
=\frac{l}{3}-\frac{a}{15}\ge \frac{12 \lambda-1+\sqrt{1-4\lambda
}}{30\lambda }l.
\end{align}
On the other hand, it follows from \eqref{2.3} that
\begin{equation}\label{7.6}
\int_{\mathbb{R}^{3}}|v_n|^{6}\mathrm{d}x
\le S^{-3}\|v_n\|_{\mathcal{D}^{1,2}(\mathbb{R}^3)}^{6}
\le S^{-3}\|v_n\|^{6}.
\end{equation}
So, it gives that
\begin{equation}\label{6.16}
b\le S^{-3}l^{3}.
\end{equation}
By \eqref{2.3} and H\"{o}lder inequality, we obtain that
\begin{align*}
\int_{\mathbb{R}^{3}}\phi_{v_n} |v_n|^5\mathrm{d}x
&\le
\left(\int_{\mathbb{R}^{3}}|\phi_{v_n}|^6\mathrm{d}x\right)^{\frac{1}{6}}
\left(\int_{\mathbb{R}^{3}}|v_n|^6\mathrm{d}x\right)^{\frac{5}{6}}
\\&\le S^{-\frac{1}{2}}\left(\int_{\mathbb{R}^{3}}|\nabla
\phi_{v_n}|^2\mathrm{d}x\right)^{\frac{1}{2}}
\left(\int_{\mathbb{R}^{3}}|v_n|^6\mathrm{d}x\right)^{\frac{5}{6}}
\\&\le S^{-\frac{1}{2}}\left(\int_{\mathbb{R}^{3}}\phi_{v_n}
|v_n|^5\mathrm{d}x\right)^{\frac{1}{2}}
\left(\int_{\mathbb{R}^{3}}|v_n|^6\mathrm{d}x\right)^{\frac{5}{6}},
\end{align*}
which directly yields from \eqref{7.6} that
\begin{equation*}
\int_{\mathbb{R}^{3}}\phi_{v_n} |v_n|^5\mathrm{d}x
\le S^{-1}
\left(\int_{\mathbb{R}^{3}}|v_n|^6\mathrm{d}x\right)^{\frac{5}{3}}
\le S^{-6}\|v_n\|^{10}.
\end{equation*}
Then, it gives that
\begin{equation}\label{6.26}
a
\ge \lambda S^{-6}l^5.
\end{equation}
Thus, it follows from \eqref{4.316}, \eqref{5.76}, \eqref{6.16}
and \eqref{6.26} that
\[
l
=b-a
\le S^{-3}l^{3}-\lambda S^{-6}l^5.
 \]
 Thus, $l=0$ or $l^2\ge \frac{1-\sqrt{1-4\lambda}}{2\lambda}S^3$.
 If $l^2\ge \frac{1-\sqrt{1-4\lambda}}{2\lambda}S^3$, combining
 with \eqref{5.86}, it yields that
\begin{align*}
\frac{1}{2}\|v_n\|^2
+\frac{\lambda}{10}\int_{\mathbb{R}^{3}}\phi_{v_n}
|v_n|^5\mathrm{d}x
-\frac{1}{6}\int_{\mathbb{R}^{3}}|v_n|^{6}\mathrm{d}x
+o(1)
&  \ge \frac{12 \lambda-1+\sqrt{1-4\lambda }}{30\lambda }l
\\&
\ge \frac{12 \lambda-1+\sqrt{1-4\lambda }}{30\lambda }
\left(\frac{1-\sqrt{1-4\lambda}}{2\lambda}\right)^{\frac{1}{2}}S^{\frac{3}{2}}
\\ &=c^*_\lambda.
\end{align*}
Hence,
\begin{equation*}
m_\lambda=\mathcal{I}_\lambda(u_n)+o(1)
=\mathcal{I }_\lambda(u)+\left(\frac{1}{2}\|v_n\|^2
+\frac{\lambda}{10}\int_{\mathbb{R}^{3}}\phi_{v_n}
|v_n|^5\mathrm{d}x
-\frac{1}{6}\int_{\mathbb{R}^{3}}|v_n|^{6}\mathrm{d}x\right)+o(1)
\ge c_\lambda+c^*_\lambda,
\end{equation*}
which leads to a contradiction with our assumption
$m_\lambda\in(0,c_\lambda+c^*_\lambda)$.
\end{proof}

Now, we complete the proof of Theorem \ref{thm-2}.

\begin{proof}[\textbf{Proof of Theorem \ref{thm-2}}] From Lemma
\ref{lam-76}, we know that there exists a sequence
$\{u_n\}\subset U_\lambda$ satisfying
$\mathcal{I}_\lambda(u_n)\rightarrow m_\lambda$ and
$\mathcal{I}_\lambda'(u_n)\rightarrow0$ as $n\rightarrow\infty$.
Combining Lemma \ref{lam-86} with Lemma \ref{lam-116}, we obtain
that $\{u_n\}$ contains a convergent subsequence, still denoted
by $\{u_n\}$. Then there exists $u_\lambda\in
H_r^1(\mathbb{R}^3)$ such that $u_n\rightarrow u_\lambda$ in
$H_r^1(\mathbb{R}^3)$ as $n\rightarrow\infty$, and by the
continuity of  $\mathcal{I}_\lambda$ and $\mathcal{I}_\lambda'$,
we see that $\mathcal{I}_\lambda(u_\lambda)=m_\lambda$ and
$\mathcal{I}_\lambda'(u_\lambda)=0$. Furthermore, from
$\{u_n\}\subset U_\lambda$, we have
$\frac{1}{2}<l_\lambda(u_n^+,u_n^-)<\frac{3}{2}$ and
$\frac{1}{2}<l_\lambda(u_n^-,u_n^+)<\frac{3}{2}$ by \eqref{4.26},
then
\begin{align*}
\frac{1}{2}\|u_n^+\|^2
&<\int_{\mathbb{R}^{3}}|u_n^+|^6\mathrm{d}x
+\int_{\mathbb{R}^{3}}|u_n^+|^q\mathrm{d}x
-\lambda\int_{\mathbb{R}^{3}}\phi_{u_n^+} |u_n^+|^5\mathrm{d}x
-\lambda\int_{\mathbb{R}^{3}}\phi_{u_n^-} |u_n^+|^5\mathrm{d}x
\\&\le
C_1\|u_n^+\|^{6}
+C_2\|u_n^+\|^{q}
+C_3\|u_n^+\|^{10}
+C_4\|u_n^-\|^{5}\|u_n^+\|^{5}
.
\end{align*}
This together with $u^-_n\to u^-_\lambda$ in
$H_r^1(\mathbb{R}^3)$, we conclude that
\begin{align*}
\frac{1}{2}\|u_n^+\|^2
<C_1\|u_n^+\|^{6}
+C_2\|u_n^+\|^{q}
+C_3\|u_n^+\|^{10}
+C_5\|u_n^+\|^{5},
\end{align*}
then by $q\in(2,6)$, there exists $\varrho>0$ such that
$\|u_n^+\|\ge\varrho$, which implies that
$$\|u_\lambda^+\|=\lim\limits_{n\to \infty}\|u_n^+\|\ge
\varrho>0.$$
Similarly, $\|u_\lambda^-\|\ge \varrho>0$. Thus, $u_\lambda$ is a
least energy radial sign-changing solution of \eqref{1.6}.
\end{proof}


\section{Least energy radial sign-changing solutions for the case
$\lambda=0$}\label{4}
\noindent In this section, we are interested in the existence of
least energy radial sign-changing solutions for problem \eqref{1.7}, and
prove Corollary \ref{thm-5}. Before proving Corollary \ref{thm-5}, we
give some definitions firstly. Define the energy functional
$\mathcal{I}_0$ associating with problem \eqref{1.7} by
\begin{equation*}
\mathcal{I}_0(u):=\frac{1}{2}\|u\|^2
-\frac{1}{6}\int_{\mathbb{R}^{3}}|u|^{6}\mathrm{d}x
-\frac{1}{q}\int_{\mathbb{R}^{3}}|u|^{q}\mathrm{d}x.
\end{equation*}
Meanwhile, let us define
\begin{equation*}
c_0:=\inf_{u\in\mathcal{N}_0}\mathcal{I}_0(u),\quad
m_0:=\inf_{u\in\mathcal{M}_0}\mathcal{I}_0(u),
\end{equation*}
where
$$\mathcal{N}_0:=\left\{u\in
H_r^1(\mathbb{R}^3)\setminus\{0\}:\langle
\mathcal{I}_0'(u),u\rangle=0\right\},$$
and
$$\mathcal{M}_0:=\left\{u\in
H_r^1(\mathbb{R}^3):u^{\pm}\neq0,\langle
\mathcal{I}_0'(u),u^\pm\rangle
=0\right\}.$$

We first show the following lemma, in view of our
nonlinearity, which can be directly conclude from \cite[Lemma
2.2]{CXP21} with $\lambda\equiv 0$.
So we omit it here.

\begin{lem}\label{lam-66}
Let $v\in H_r^{1}(\mathbb{R}^3)$ with $v^\pm\neq0$, then there
exists a unique pair $(s_v,t_v)\in (0,\infty)\times(0,\infty)$
such that $s_vv^++t_vv^-\in \mathcal{M}_0$. Moreover,
$$\mathcal{I }_0(s_vv^++t_vv^-)
=\max\limits_{s,t\ge0}\,\mathcal{I}_0(sv^++tv^-).$$
\end{lem}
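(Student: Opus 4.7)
The plan is to reduce Lemma \ref{lam-66} to two decoupled one-dimensional maximizations, exploiting the fact that when $\lambda=0$ the nonlocal terms vanish. Following the strategy of Lemma \ref{lam-16}, I introduce
\begin{equation*}
\Phi(s,t):=\mathcal{I}_0(sv^++tv^-)
=\frac{s^2}{2}\|v^+\|^2+\frac{t^2}{2}\|v^-\|^2
-\frac{s^6}{6}|v^+|_6^6-\frac{t^6}{6}|v^-|_6^6
-\frac{s^q}{q}|v^+|_q^q-\frac{t^q}{q}|v^-|_q^q,
\end{equation*}
and observe that $\Phi(s,t)=\varphi_+(s)+\varphi_-(t)$ splits as a sum of two independent one-variable functions. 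Since the differential $\langle\mathcal{I}_0'(sv^++tv^-),sv^+\rangle$ (resp.\ $\langle\mathcal{I}_0'(sv^++tv^-),tv^-\rangle$) coincides with $s\varphi_+'(s)$ (resp.\ $t\varphi_-'(t)$), the condition $sv^++tv^-\in\mathcal{M}_0$ is exactly the pair of equations $\varphi_+'(s)=\varphi_-'(t)=0$ with $s,t>0$.

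Next, I would analyze each $\varphi_\pm$ separately. Computing $\varphi_\pm'(\tau)=\tau\bigl(\|v^\pm\|^2-\tau^4|v^\pm|_6^6-\tau^{q-2}|v^\pm|_q^q\bigr)$ and noting that the bracket is strictly decreasing on $[0,\infty)$ (since $q>2$), goes from $\|v^\pm\|^2>0$ at $\tau=0$ to $-\infty$ as $\tau\to\infty$, one gets a unique $s_v>0$ (resp.\ unique $t_v>0$) at which $\varphi_+'$ (resp.\ $\varphi_-'$) vanishes. Since $\varphi_\pm(\tau)\to-\infty$ as $\tau\to\infty$ and $\varphi_\pm(0)=0$, these critical points are the global maxima of $\varphi_\pm$ on $[0,\infty)$, and they are strictly positive since $\varphi_\pm$ is increasing near $0$. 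Combining, the unique maximizer of $\Phi$ on $[0,\infty)\times[0,\infty)$ lies in the interior and equals $(s_v,t_v)$; this gives simultaneously existence, uniqueness, the Nehari membership, and the identity
\begin{equation*}
\mathcal{I}_0(s_vv^++t_vv^-)=\max_{s,t\ge 0}\mathcal{I}_0(sv^++tv^-).
\end{equation*}

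Compared to Lemma \ref{lam-16}, there is no genuine obstacle here: the coupling terms $s^5t^5\int\phi_{v^\mp}|v^\pm|^5$ that force the use of strict concavity in two variables are absent, so the problem degenerates into two classical Nehari scalings on $\mathbb{R}^+$. The only minor point worth noting is that monotonicity of $h_\pm(\tau):=\|v^\pm\|^2-\tau^4|v^\pm|_6^6-\tau^{q-2}|v^\pm|_q^q$ uses $q>2$, which is within the range considered in Corollary \ref{thm-5} ($q\in(5,6)$). This confirms that the proof can safely be omitted, as the authors do, by reference to the analogous statement in \cite{CXP21}.
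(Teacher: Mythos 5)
Your argument is correct and complete, and it is worth noting that the paper does not give a proof at all here: it simply refers to \cite[Lemma~2.2]{CXP21} with $\lambda\equiv 0$, a reference that handles the coupled Schr\"odinger--Poisson Nehari-type system. Your proposal instead exploits the structural collapse at $\lambda=0$: since $v^+$ and $v^-$ have disjoint supports and the nonlocal coupling terms $s^5t^5\int\phi_{v^\mp}|v^\pm|^5\mathrm{d}x$ are absent, $\Phi(s,t)=\mathcal{I}_0(sv^++tv^-)$ decouples exactly as $\varphi_+(s)+\varphi_-(t)$, and each $\varphi_\pm$ is a classical single-power Nehari fibering map. The identity $\langle\mathcal{I}_0'(sv^++tv^-),sv^\pm\rangle=s\varphi_\pm'(s)$ (resp.\ $t\varphi_-'(t)$) is right, the monotonicity of $h_\pm(\tau)=\|v^\pm\|^2-\tau^4|v^\pm|_6^6-\tau^{q-2}|v^\pm|_q^q$ does rely only on $q>2$, and the boundary is excluded because $\varphi_\pm$ is strictly increasing near $0$ and tends to $-\infty$. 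This is a genuinely more elementary route than the two-variable argument needed for $\lambda\neq 0$ (where one invokes Miranda's theorem or strict concavity as in Lemma~\ref{lam-16}), and it has the small pedagogical advantage of making the uniqueness of $(s_v,t_v)$ transparent rather than outsourcing it; the trade-off is that it uses the disjointness of $\mathrm{supp}\,v^+$ and $\mathrm{supp}\,v^-$ in an essential way and therefore does not generalize back to the coupled case.
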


\begin{lem}\label{lam-67}
If $\langle \mathcal{I}_0'(v),v^\pm\rangle=0$, then $s_v=t_v=1$,
where $(s_v,t_v)$ is obtained by Lemma \ref{lam-66}.

\end{lem}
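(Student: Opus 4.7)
The plan is to exploit the uniqueness clause in Lemma \ref{lam-66} directly. Since the pair $(s_v,t_v)$ comes from Lemma \ref{lam-66}, the function $v$ must satisfy $v^\pm\neq 0$, so $v=v^++v^-$ is a genuine sign-changing decomposition. First I would observe that the hypothesis $\langle\mathcal{I}_0'(v),v^+\rangle=\langle\mathcal{I}_0'(v),v^-\rangle=0$, combined with $v^\pm\neq 0$, is by definition equivalent to $v\in\mathcal{M}_0$. Hence the pair $(s,t)=(1,1)$ already realizes $sv^++tv^-=v\in\mathcal{M}_0$.

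Next, I would invoke the uniqueness part of Lemma \ref{lam-66}: for any $v\in H_r^1(\mathbb{R}^3)$ with $v^\pm\neq 0$, there is \emph{exactly one} pair $(s,t)\in(0,\infty)\times(0,\infty)$ making $sv^++tv^-\in\mathcal{M}_0$. Comparing this unique pair, which by definition is $(s_v,t_v)$, with the pair $(1,1)$ exhibited above, I conclude $s_v=t_v=1$.

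There is no real obstacle here; the statement is essentially a tautological consequence of the uniqueness assertion, and the only subtlety is making explicit that $v^\pm\neq 0$ is built into the applicability of Lemma \ref{lam-66}. If desired, one could also verify the conclusion by writing out the two equations $\langle\mathcal{I}_0'(s_vv^++t_vv^-),v^\pm\rangle=0$ as a system in $(s_v,t_v)$ and noting, via the strict concavity of $\Phi(s,t):=\mathcal{I}_0(sv^++tv^-)$ already used in the proof of Lemma \ref{lam-16}/Lemma \ref{lam-66}, that the critical point is unique; but this only repeats the argument already encoded in Lemma \ref{lam-66}.
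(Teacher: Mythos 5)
Your argument is correct and takes a genuinely different (and more structural) route than the paper's. The paper does not invoke the uniqueness clause of Lemma~\ref{lam-66} directly; instead it writes out the Nehari identity for $s_vv^+$ that comes from $s_vv^++t_vv^-\in\mathcal{M}_0$, namely $s_v^2\|v^+\|^2=s_v^6\int_{\mathbb{R}^3}|v^+|^6\,\mathrm{d}x+s_v^q\int_{\mathbb{R}^3}|v^+|^q\,\mathrm{d}x$, subtracts (after dividing by $s_v^q$) the identity $\|v^+\|^2=\int_{\mathbb{R}^3}|v^+|^6\,\mathrm{d}x+\int_{\mathbb{R}^3}|v^+|^q\,\mathrm{d}x$ coming from the hypothesis $\langle\mathcal{I}_0'(v),v^+\rangle=0$, and obtains $\left(1-s_v^{2-q}\right)\|v^+\|^2=\left(1-s_v^{6-q}\right)\int_{\mathbb{R}^3}|v^+|^6\,\mathrm{d}x$, from which $s_v=1$ follows because for $q\in(2,6)$ and $v^+\neq0$ the two sides have strictly opposite signs unless $s_v=1$; the same computation gives $t_v=1$. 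Your observation --- that the hypothesis, combined with $v^\pm\neq0$ (which is already implicit in the fact that Lemma~\ref{lam-66} was applicable to $v$), says precisely $v\in\mathcal{M}_0$, so the pair $(1,1)$ is admissible and the uniqueness in Lemma~\ref{lam-66} forces $(s_v,t_v)=(1,1)$ --- is a clean shortcut. The paper's computation has the modest advantage of being self-contained, in effect re-proving the relevant instance of uniqueness from scratch, while yours makes transparent that the conclusion is an immediate consequence of the structure already established in Lemma~\ref{lam-66}.
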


\begin{proof} Since $s_vv^++t_vv^-\in \mathcal{M}_0$, then
\begin{equation}\label{2.23}
{s_v^2}\|v^+\|^2
= s_v^{6}\int_{\mathbb{R}^{3}}|v^+|^{6}\mathrm{d}x
+s_v^{q}\int_{\mathbb{R}^{3}}|v^+|^{q}\mathrm{d}x.
\end{equation}
Since $\langle \mathcal{I}_0'(v),v^+\rangle=0$, it yields that
\begin{equation}\label{2.24}
\|v^+\|^2
=\int_{\mathbb{R}^{3}}|v^+|^{6}\mathrm{d}x
+\int_{\mathbb{R}^{3}}|v^+|^{q}\mathrm{d}x.
\end{equation}
Combining \eqref{2.23} with \eqref{2.24}, we obtain that
\begin{equation*}
\left(1-{s_v^{2-q}}\right)\|v^+\|^2
=\left(1-s_v^{6-q}\right)\int_{\mathbb{R}^{3}}|v^+|^{6}\mathrm{d}x.
\end{equation*}
This together with $q\in(2,6)$ derives that $s_v=1$. Similarly,
$t_v=1$. So, the proof is complete.
\end{proof}

\begin{proof}[\textbf{Proof of Corollary \ref{thm-5}}]
Similar arguments as \cite[Theorem 1.1]{ZZH21} with
$\phi\equiv0$,
we obtain that problem \eqref{1.7} has a least energy radial
sign-changing solution $z_0$.

In what follows, we will prove that the least energy radial
sign-changing solution $z_0$ has exactly two nodal domains.
Suppose by contradiction that $z_0$ has at least three nodal
domains satisfying
\begin{equation*}
z_0=z_1+z_2+z_3
\end{equation*}
with $z_i\neq0,\ z_1\ge0,\ z_2\le0$ and supp$(z_i)\ \cap\
$supp$(z_j)=\emptyset$, for $i\neq j,\ i,j=1,2,3$
and
\begin{align*}
\langle \mathcal{I}_0'(z_0),z_i\rangle=0,\ \ \mathrm{for}\
i=1,2,3.
\end{align*}
Setting $w:=z_1+z_2$, we obtain that $w^+=z_1,\ w^-=z_2$, i.e.,
$w^\pm\neq0$. By applying Lemma \ref{lam-66}, there exists a
unique pair $(s_w,t_w)\in (0,\infty)\times (0,\infty)$ such that
$$s_ww^++t_ww^-=s_wz_1+t_wz_2\in\mathcal{M}_0\quad
\mathrm{and}\quad
\mathcal{I}_0(s_wz_1+t_wz_2)\ge m_0.$$
Using the fact that $\langle \mathcal{I }_0'(z_0),z_i\rangle=0$
for $i=1,2,3$, it follows that $\langle \mathcal{I}
_0'(w),w^\pm\rangle=0$, then $s_w=t_w=1$ by Lemma \ref{lam-67}.
On the other hand, we deduce that
\begin{align*}
0=\frac{1}{2}\langle \mathcal{I}_0'(z_0),z_3\rangle
=\frac{1}{2}\|z_3\|^2
-\frac{1}{2}\int_{\mathbb{R}^{3}}|z_3|^{6}\mathrm{d}x
-\frac{1}{2}\int_{\mathbb{R}^{3}}|z_3|^{q}\mathrm{d}x
<\mathcal{I}_0(z_3).
\end{align*}
Then, we see that
\begin{align*}
m_{0}
&\le \mathcal{I}_0(s_wz_1+t_wz_2)
\\&=\mathcal{I}_0(s_wz_1+t_wz_2)
-\frac{1}{q}
\langle \mathcal{I}_0'(s_w z_1+t_wz_2),s_wz_1+t_wz_2\rangle
\\&=\left(\frac{1}{2}-\frac{1}{q}\right)\|s_w z_1\|^2
+\left(\frac{1}{q}-\frac{1}{6}\right)\int_{\mathbb{R}^{3}}|s_w
z_1|^{6}\mathrm{d}x
\\&\quad+\left(\frac{1}{2}-\frac{1}{q}\right)\|t_w z_2\|^2
+\left(\frac{1}{q}-\frac{1}{6}\right)\int_{\mathbb{R}^{3}}|t_w
z_2|^{6}\mathrm{d}x
\\&=\left(\frac{1}{2}-\frac{1}{q}\right)\|z_1\|^2
+\left(\frac{1}{q}-\frac{1}{6}\right)
\int_{\mathbb{R}^{3}}|z_1|^{6}\mathrm{d}x
\\&\quad+\left(\frac{1}{2}-\frac{1}{q}\right)\|z_2\|^2
+\left(\frac{1}{q}-\frac{1}{6}\right)
\int_{\mathbb{R}^{3}}|z_2|^{6}\mathrm{d}x
\\&=\mathcal{I}_0(z_1)
-\frac{1}{q}\langle \mathcal{I}_0'(z_1),z_1\rangle
+\mathcal{I}_0(z_2)
-\frac{1}{q}\langle \mathcal{I}_0'(z_2),z_2\rangle
\\&=\mathcal{I}_0(z_1)
+\mathcal{I}_0(z_2)
\\&<\mathcal{I}_0(z_1)
+\mathcal{I}_0(z_2)
+\mathcal{I}_0(z_3)
\\&=\mathcal{I}_0(z_0)=m_{0} ,
\end{align*}
which is a contradiction. That is, $z_3=0$, and $z_0$ has exactly
two nodal domains.

Lastly, it remains to show that $m_0\ge 2c_0$. Similar arguments
as Lemma \ref{lam-66}, there exist $\bar{s},\bar{t}>0$ such that
$\bar{s}z_0^+,\bar{t}z_0^-\in \mathcal{N}_0$. Then, it follows
from Lemma \ref{lam-66} that
\begin{align*}
m_0
=\mathcal{I}_0(z_0)
\ge \mathcal{I}_0(\bar{s}z_{0}^++\bar{t}z_{0}^-)
=\mathcal{I}_0(\bar{s}z_{0}^+)
+\mathcal{I}_0(\bar{t}z_{0}^-)
\ge 2c_{0}.
\end{align*}
Thus, the proof of Corollary \ref{thm-5} is completed.
\end{proof}

\begin{rem}\label{rem-16}
Similar arguments as \cite[Lemma 3.3]{ZZH21} with $\phi\equiv0$,
which gives that $m_0<c_0+\frac{1}{3}S^{\frac{3}{2}}$. This
together with Corollary \ref{thm-5}, it holds that $m_0\ge 2c_0$.
Thus, $c_0< \frac{1}{3}S^{\frac{3}{2}}$ and $m_0<
\frac{2}{3}S^{\frac{3}{2}}$.
\end{rem}

\section{Asymptotic behavior of sign-changing solutions}\label{6}

\noindent Now, we consider the asymptotic behavior of $u_\lambda$
as $\lambda\to 0^-$, and prove Theorem \ref{thm-4}. In what
follows, we regard $\lambda<0$ as a parameter in system
\eqref{1.6} and show the relationship between the case
$\lambda<0$ and $\lambda=0$ in system \eqref{1.6}. We first present the following lemma, which will be used in the proof of Theorem \ref{thm-4}.


\begin{lem}\label{lam-446}
Let $m_\lambda$ be defined by \eqref{9.26}, then $0<m_\lambda<\frac{2}{3}S^{\frac{3}{2}}.$
\end{lem}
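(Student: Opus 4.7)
The plan is to establish the two inequalities separately, with the upper bound being the substantive part.

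The lower bound $m_\lambda > 0$ is immediate from Lemma \ref{lam-126}(3): taking any minimizing sequence $\{u_n\} \subset \mathcal{M}_\lambda$ with $\mathcal{I}_\lambda(u_n) \to m_\lambda$, part (3) of that lemma gives $m_\lambda \geq (\tfrac{1}{2}-\tfrac{1}{q})\|u_n^\pm\|^2 > 0$ already in the course of its proof, so this is essentially free.

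For the upper bound $m_\lambda < \tfrac{2}{3}S^{3/2}$, the natural test function is the least energy radial sign-changing solution $u_0$ of problem \eqref{1.7} furnished by Corollary \ref{thm-5}. Since $u_0^\pm \neq 0$ and $u_0 \in H_r^1(\mathbb{R}^3)$, Lemma \ref{lam-16} produces a unique pair $s_\lambda, t_\lambda > 0$ such that $s_\lambda u_0^+ + t_\lambda u_0^- \in \mathcal{M}_\lambda$, which yields the admissible bound
\[
m_\lambda \;\leq\; \mathcal{I}_\lambda\bigl(s_\lambda u_0^+ + t_\lambda u_0^-\bigr).
\]

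The remaining step is to compare $\mathcal{I}_\lambda$ with $\mathcal{I}_0$ at this point. The two functionals differ only in the nonlocal term $\tfrac{\lambda}{10}\int_{\mathbb{R}^3} \phi_v |v|^5\,dx$, which by Proposition \ref{prop-1} is strictly positive for every $v \neq 0$. Since $\lambda < 0$, applying this to $v = s_\lambda u_0^+ + t_\lambda u_0^- \neq 0$ gives the strict inequality $\mathcal{I}_\lambda(v) < \mathcal{I}_0(v)$. Because $u_0 \in \mathcal{M}_0$, Lemma \ref{lam-66} gives the max characterization
\[
\mathcal{I}_0(u_0) = \max_{s,t \geq 0} \mathcal{I}_0(s u_0^+ + t u_0^-) = m_0,
\]
so in particular $\mathcal{I}_0(s_\lambda u_0^+ + t_\lambda u_0^-) \leq m_0$. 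Chaining these with $m_0 < \tfrac{2}{3}S^{3/2}$ from Remark \ref{rem-16} produces
\[
m_\lambda \;<\; \mathcal{I}_0(s_\lambda u_0^+ + t_\lambda u_0^-) \;\leq\; m_0 \;<\; \tfrac{2}{3}S^{3/2},
\]
which is the required estimate.

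There is no real obstacle: the whole argument is a one-line test-function comparison, relying on the monotonicity of $\lambda \mapsto \mathcal{I}_\lambda$ (for fixed $v \neq 0$) on the negative axis together with the max property on $\mathcal{M}_0$. The only thing to verify carefully is that all the objects used, namely $u_0$ from Corollary \ref{thm-5}, the pair $(s_\lambda,t_\lambda)$ from Lemma \ref{lam-16}, and the bound $m_0 < \tfrac{2}{3}S^{3/2}$ from Remark \ref{rem-16}, are all available at this point in the paper and independent of the specific value of $\lambda \in (\lambda^*,0)$.
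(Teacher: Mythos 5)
Your proof is correct, and it takes a genuinely different and simpler route than the paper does. The paper's own argument re-runs the concentration-compactness estimate from Lemma~\ref{lam-86}: it first refines the bound on $\Pi_1$ to show $m_\lambda<c_\lambda+\frac{1}{3}S^{3/2}$ (using the Aubin--Talenti functions $u_\varepsilon$ and the asymptotics \eqref{3.766}--\eqref{3.86}), and then separately proves $c_\lambda<\frac{1}{3}S^{3/2}$ via the scalar maximization $\max_{\alpha\geq 0}\mathcal{I}_\lambda(\alpha u_\varepsilon)$, dropping the negative nonlocal term. By contrast, you sidestep the cut-off-function machinery entirely by taking the known least-energy sign-changing solution $z_0$ of \eqref{1.7} as a test function, projecting it onto $\mathcal{M}_\lambda$ via Lemma~\ref{lam-16}, and exploiting the strict monotonicity of $\lambda\mapsto\mathcal{I}_\lambda(v)$ at any fixed $v\neq 0$ together with the max characterization at $(s_{z_0},t_{z_0})=(1,1)$ from Lemma~\ref{lam-66} (whose uniqueness clause forces the pair to be $(1,1)$ since $z_0\in\mathcal{M}_0$). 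What your approach buys is a clean one-line chain $m_\lambda<\mathcal{I}_0(s_\lambda z_0^++t_\lambda z_0^-)\leq m_0<\frac{2}{3}S^{3/2}$ that avoids reproducing the test-function estimates; it also shows the stronger fact $m_\lambda<m_0$. What the paper's approach buys is a self-contained comparison with $c_\lambda$ that is structurally parallel to Lemma~\ref{lam-86} and so requires no further lemmas beyond Remark~\ref{rem-16}; it also yields the explicit bound $c_\lambda<\frac{1}{3}S^{3/2}$, which may be of independent interest. All the ingredients you invoke (Corollary~\ref{thm-5}, Lemma~\ref{lam-16}, Lemma~\ref{lam-66}, Remark~\ref{rem-16}) are established before Section~\ref{6}, so there is no circularity.
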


\begin{proof}
We first claim that $m_\lambda<c_\lambda+\frac{1}{3}S^{\frac{3}{2}}$. We infer from \eqref{4.116} in Lemma \ref{lam-86} that
\begin{align*}
\Pi_1
&\le\max\limits_{t\ge 0}
\left(\frac{t^2}{2}\|
u_\varepsilon\|_{\mathcal{D}^{1,2}(\mathbb{R}^3)}^2
+\frac{\lambda t^{10}}{10}
\int_{\mathbb{R}^{3}}\phi_{ u_\varepsilon}|
u_\varepsilon|^{5}\mathrm{d}x
-\frac{t^6}{6}
\int_{\mathbb{R}^{3}}| u_\varepsilon|^{6}\mathrm{d}x\right)
\\&\le \max\limits_{t\ge 0}
\left(\frac{t^2}{2}\|
u_\varepsilon\|_{\mathcal{D}^{1,2}(\mathbb{R}^3)}^2
-\frac{t^6}{6}
\int_{\mathbb{R}^{3}}| u_\varepsilon|^{6}\mathrm{d}x\right)
\\&=\frac{1}{3}
\frac{\|u_\varepsilon\|_{\mathcal{D}^{1,2}(\mathbb{R}^3)}^3}
{\left(\int_{\mathbb{R}^{3}}| u_\varepsilon|^{6}\mathrm{d}x\right)^{\frac{1}{2}}}
=\frac{1}{3}S^{\frac{3}{2}}+O(\varepsilon^{\frac{1}{2}}),
\end{align*}
provided that $\lambda<0$. The other part of the proof is the same as Lemma \ref{lam-86}. In view of this, the claim holds. In order to complete the proof of Lemma \ref{lam-446}, it remains to prove $c_\lambda<\frac{1}{3}S^{\frac{3}{2}}$, where $c_\lambda$ defined by \eqref{9.25}.

Indeed, it is obvious that $\mathcal{I}_\lambda(\alpha u_\varepsilon)>0$ for $\alpha>0$ small, and $\mathcal{I}_\lambda(\alpha u_\varepsilon)\to -\infty$ for $\alpha \to \infty$, where $u_\varepsilon$ defined in Subsection \ref{sub-5.3}. Hence, there exists $\alpha_\varepsilon>0$ such that $\mathcal{I}_\lambda(\alpha_\varepsilon u_\varepsilon)
=\max_{\alpha\ge 0}\mathcal{I}_\lambda(\alpha u_\varepsilon)$. It follows from Lemma \ref{lam-64} that $c_\lambda\le \mathcal{I}_\lambda(\alpha_\varepsilon u_\varepsilon)$. Moreover, it is easy to verify that $\alpha_\varepsilon$ contains in a bounded interval. In view of this, it suffices to show that $\mathcal{I}_\lambda(\alpha_\varepsilon u_\varepsilon)<\frac{1}{3}S^{\frac{3}{2}}$. We infer from \eqref{3.86}, $q\in(5,6)$ and $\lambda<0$ that
\begin{align*}
\mathcal{I}_\lambda(\alpha_\varepsilon u_\varepsilon)
&=\frac{1}{2}\|\alpha_\varepsilon u_\varepsilon\|_{\mathcal{D}^{1,2}(\mathbb{R}^3)}^2
+\frac{\lambda}{10}\int_{\mathbb{R}^{3}}\phi_{\alpha_\varepsilon u_\varepsilon}|\alpha_\varepsilon u_\varepsilon|^{5}\mathrm{d}x
-\frac{1}{6}\int_{\mathbb{R}^{3}}|\alpha_\varepsilon u_\varepsilon|^{6}\mathrm{d}x
\\&\quad +\frac{1}{2}
\int_{\mathbb{R}^{3}}|\alpha_\varepsilon u_\varepsilon|^{2}
\mathrm{d}x
-\frac{1}{q}
\int_{\mathbb{R}^{3}}|\alpha_\varepsilon u_\varepsilon|^{q}
\mathrm{d}x
\\&\le\max\limits_{\alpha\ge 0}
\left(\frac{\alpha^2}{2}\|
u_\varepsilon\|_{\mathcal{D}^{1,2}(\mathbb{R}^3)}^2
+\frac{\lambda \alpha^{10}}{10}
\int_{\mathbb{R}^{3}}\phi_{ u_\varepsilon}|
u_\varepsilon|^{5}\mathrm{d}x
-\frac{\alpha^6}{6}
\int_{\mathbb{R}^{3}}| u_\varepsilon|^{6}\mathrm{d}x\right)
+C_1 \varepsilon^{\frac{1}{2}}
-C_2 \varepsilon^{\frac{6-q}{4}}
\\&\le \max\limits_{\alpha\ge 0}
\left(\frac{\alpha^2}{2}\|
u_\varepsilon\|_{\mathcal{D}^{1,2}(\mathbb{R}^3)}^2
-\frac{\alpha^6}{6}
\int_{\mathbb{R}^{3}}| u_\varepsilon|^{6}\mathrm{d}x\right)
+C_1 \varepsilon^{\frac{1}{2}}
-C_2 \varepsilon^{\frac{6-q}{4}}
\\&\le \frac{1}{3}S^{\frac{3}{2}}
+C_1\varepsilon^{\frac{1}{2}}
-C_2\varepsilon^{\frac{6-q}{4}}
<\frac{1}{3}S^{\frac{3}{2}},
\end{align*}
as $\varepsilon\to 0$. Therefore, $m_\lambda<\frac{2}{3}S^{\frac{3}{2}}$, and we complete the proof of Lemma \ref{lam-446}.
\end{proof}

\begin{proof}[\textbf{Proof of Theorem \ref{thm-4}}] Recall that
$u_\lambda\in H_r^1(\mathbb{R}^3)$ is a least energy
sign-changing solution of system \eqref{1.6} obtained in Theorem
\ref{thm-2}, this together with Lemma \ref{lam-446} implies that $\mathcal{I}_\lambda
(u_\lambda)=m_{\lambda}<\frac{2}{3}S^{\frac{3}{2}}$ and $\mathcal{I}_\lambda
'(u_\lambda)=0$. We split the proof
into three claims which can yield to Theorem \ref{thm-4} directly.

\textbf{Claim 1}: For any sequence $ \{\lambda_n\}$ with
$\lambda_n\rightarrow {0^-}$ as $n\to\infty$, $
\{u_{\lambda_n}\}$ is bounded in $H_r^1(\mathbb{R}^3)$.

For any sequence $ \{\lambda_n\}$, there exists a subsequence of $\{u_{\lambda_n}\}$ such that $\mathcal{I}_{\lambda_n}
(u_{\lambda_n})=m_{\lambda_n}<\frac{2}{3}S^{\frac{3}{2}}$  and $\mathcal{I}_{\lambda_n}'(u_{\lambda_n})=0$. Then, let $n\to\infty$, it follows that
\begin{align*}
\frac{2}{3}S^{\frac{3}{2}}
> \mathcal{I}_{\lambda_n}(u_{\lambda_n})
=\mathcal{I}_{\lambda_n}(u_{\lambda_n})
-\frac{1}{q}\langle
\mathcal{I}_{\lambda_n}'(u_{\lambda_n}),u_{\lambda_n}\rangle
\ge \left(\frac{1}{2}-\frac{1}{q}\right)\|u_{\lambda_n}\|^2,
\end{align*}
which indicates that $\{u_{\lambda_n}\}$ is bounded in
$H_r^1(\mathbb{R}^3)$.

In view of Claim $1$ and Proposition \ref{lam-32},
there exists a subsequence of $\{\lambda_n\}$ satisfying
$\lambda_n\to 0$ as $n\to \infty$, still denoted by
$\{\lambda_n\}$ and there exists $u_0\in H_r^1(\mathbb{R}^3)$
such that, for any $r\in[2,6)$,
\begin{equation}\label{9.18}
u_{\lambda_n}\rightharpoonup u_0\
\mathrm{in}\ H_r^1(\mathbb{R}^3),\ \
u_{\lambda_n}\rightarrow u_0\
\mathrm{in}\ L^r(\mathbb{R}^{3}),\ \
u_{\lambda_n}\rightarrow u_0\
\mathrm{a.e.}\ \mathrm{in}\  \mathbb{R}^{3}.
\end{equation}

\textbf{Claim 2}: $u_0$ is a radial sign-changing solution of
problem \eqref{1.7}.

Since ${u_{\lambda_n}}$ is a least energy radial sign-changing
solution of system \eqref{1.6} with $\lambda=\lambda_n$, then
\begin{align}\label{5.3}
&\int_{\mathbb{R}^{3}}(\nabla u_{\lambda_n}\cdot\nabla
v+u_{\lambda_n}v)\mathrm{d}x
+\lambda_n\int_{\mathbb{R}^{3}}\phi_{u_{\lambda_n}}|u_{\lambda_n}|^3
u_{\lambda_n}v\mathrm{d}x
\nonumber\\&\quad\quad\
=\int_{\mathbb{R}^{3}}|u_{\lambda_n}|^{4}u_{\lambda_n}v\mathrm{d}x
+\int_{\mathbb{R}^{3}}|u_{\lambda_n}|^{q-2}u_{\lambda_n}v\mathrm{d}x,
\end{align}
for any $v\in \mathcal{C}_0^\infty(\mathbb{R}^{3})$. In view of
Hardy-Littlewood-Sobolev inequality (see Proposition
\ref{prohlsi}) and H\"{o}lder inequality, we conclude that
\begin{align*}
\left|\int_{\mathbb{R}^{3}}\phi_{u_{\lambda_n}}
\left|u_{\lambda_n}\right|^3
u_{\lambda_n}v\mathrm{d}x\right|
= & \frac{1}{4\pi}\int_{\mathbb{R}^{3}}\int_{\mathbb{R}^{3}}
\frac{|u_{\lambda_n}(y)|^{5}|u_{\lambda_n}(x)|^3
u_{\lambda_n}(x)v(x)}{|x-y|}\mathrm{d}x\mathrm{d}y
\\ \leq & C_1\left(\int_{\mathbb{R}^{3}}|{u_{\lambda_n}}|^{6}
\mathrm{d}x\right)^{\frac{5}{6}}
\left(\int_{\mathbb{R}^{3}}|u_{\lambda_n}|^{\frac{24}{5}}|v|^{\frac{6}{5}}
\mathrm{d}x\right)^{\frac{5}{6}}
\\ \leq & C_2\|u_{\lambda_n}\|^5\|u_{\lambda_n}\|^4\|v\|
\\ \leq & C_3\|v\|.
\end{align*}
Then combining with \eqref{9.12}, \eqref{9.11} and \eqref{9.18}, as $n\to
\infty$, it deduces  from \eqref{5.3} that
\begin{align}\label{6.46}
\int_{\mathbb{R}^{3}}(\nabla u_0\cdot\nabla v+u_0v)\mathrm{d}x
=\int_{\mathbb{R}^{3}}|u_0|^{4}u_0v\mathrm{d}x
+\int_{\mathbb{R}^{3}}|u_0|^{q-2}u_0v\mathrm{d}x,
\end{align}
for any $v\in \mathcal{C}_0^\infty(\mathbb{R}^{3})$. Thereby, $u_0$ is a weak solution of problem \eqref{1.7}.

In order to finish the proof of Claim $2$, it suffices to prove
$u_0^\pm\neq0$.
Since $u_{\lambda_n}\in\mathcal{M}_{\lambda_n}$, that is,
\begin{equation*}
\left\|u^\pm_{\lambda_n}\right\|^2
+\lambda_n
\int_{\mathbb{R}^{3}}
\phi_{u^\pm_{\lambda_n}}\left|u^\pm_{\lambda_n}\right|^5\mathrm{d}x
+\lambda_n
\int_{\mathbb{R}^{3}}
\phi_{u^\mp_{\lambda_n}}\left|u^\pm_{\lambda_n}\right|^5\mathrm{d}x
=\int_{\mathbb{R}^{3}}\left|u^\pm_{\lambda_n}\right|^6\mathrm{d}x
+\int_{\mathbb{R}^{3}}\left|u^\pm_{\lambda_n}\right|^q\mathrm{d}x.
\end{equation*}
For this, we can directly derive from Lemma \ref{lam-126}$(3)$ that
 $\Lambda_2\le\left\|u_{\lambda_n}^\pm\right\|\le \Lambda_1$ for some $\Lambda_1,\Lambda_2>0$ (independent of $\lambda$ and $n$). This together with $\lambda_n\to 0$ as $n\to \infty$, we infer that
 \begin{equation*}
\left|\lambda_n\int_{\mathbb{R}^{3}}\phi_{u^\pm_{\lambda_n}}
\left|u^\pm_{\lambda_n}\right|^5\mathrm{d}x\right|
\le - C_1 \lambda_n  \left\|u^\pm_{\lambda_n}\right\|^{10}
\le -C_2 \lambda_n \to 0,
\end{equation*}
and
\begin{equation*}
\left|\lambda_n\int_{\mathbb{R}^{3}}\phi_{u^\mp_{\lambda_n}}
\left|u^\pm_{\lambda_n}\right|^5\mathrm{d}x\right|
\le - C_3 \lambda_n
\left\|u^\mp_{\lambda_n}\right\|^5
\left\|u^\pm_{\lambda_n}\right\|^5
\le -C_4 \lambda_n \to 0,
\end{equation*}
which implies that
\begin{equation}\label{8.1}
0<\Lambda_2^2
\leq   \lim_{n\to \infty}\left\|u^\pm_{\lambda_n}\right\|^2
= \lim_{n\to
\infty}\int_{\mathbb{R}^{3}}
\left|u^\pm_{\lambda_n}\right|^6\mathrm{d}x
+\int_{\mathbb{R}^{3}}\left|u^\pm_0\right|^q\mathrm{d}x.
\end{equation}
If $\int_{\mathbb{R}^{3}}|u^\pm_0|^q\mathrm{d}x\neq 0$, it
follows from \eqref{6.46} that $\|u^\pm_0\|^2 \ge
\int_{\mathbb{R}^{3}}|u^\pm_0|^q\mathrm{d}x >0$, which implies
that $u^\pm_0\neq0$. Therefore, the proof of Claim $2$ is
completed. Otherwise, if
$\int_{\mathbb{R}^{3}}|u^+_0|^q\mathrm{d}x=0$ or
$\int_{\mathbb{R}^{3}}|u^-_0|^q\mathrm{d}x=0$,
 which leads to the following three cases:

\vspace{0.25em}
\vspace*{0.125em}

$\bullet$ {Case $i$}: $u^+_0\equiv0$ and $u^-_0\equiv0$;

\vspace{0.125em}

$\bullet$ {Case $ii$}: $u^+_0\not\equiv0$ and $u^-_0\equiv0$;

\vspace{0.125em}

$\bullet$ {Case $iii$}: $u^+_0\equiv0$ and $u^-_0\not\equiv0$.

\vspace{0.25em}
\vspace*{0.125em}

\noindent We will show those cases can not happen.

If Case $i$ happens, it follows from \eqref{2.3} and \eqref{8.1}
that
\begin{equation}\label{8.2}
0<\Lambda_2^2
\le   \lim_{n\to \infty}\left\|u^\pm_{\lambda_n}\right\|^2
= \lim_{n\to
\infty}\int_{\mathbb{R}^{3}}
\left|u^\pm_{\lambda_n}\right|^6\mathrm{d}x
\leq S^{-3}\lim_{n\to \infty}\left\|u^\pm_{\lambda_n}\right\|^6,
\end{equation}
then $\lim_{n\to \infty}\left\|u^\pm_{\lambda_n}\right\|^2\geq
S^{\frac{3}{2}}$. 
By \eqref{8.2} and
$\lambda_n\to 0$ as $n\to \infty$, we conclude that
\begin{align*}
m_0=\lim_{n\to \infty}m_{\lambda_n}
=\lim_{n\to \infty}\mathcal{I}_{\lambda_n}(u_{\lambda_n})
&= \frac{1}{2}\lim_{n\to \infty}\|u_{\lambda_n}\|^2
-\frac{1}{6}\lim_{n\to
\infty}\int_{\mathbb{R}^{3}}|u_{\lambda_n}|^6\mathrm{d}x
\\& = \frac{1}{2}\lim_{n\to \infty}\left\|u_{\lambda_n}^+\right\|^2
+\frac{1}{2}\lim_{n\to \infty}\left\|u_{\lambda_n}^-\right\|^2
\\&\quad-\frac{1}{6}\lim_{n\to
\infty}\int_{\mathbb{R}^{3}}\left|u_{\lambda_n}^+\right|^6\mathrm{d}x
-\frac{1}{6}\lim_{n\to
\infty}\int_{\mathbb{R}^{3}}\left|u_{\lambda_n}^-\right|^6\mathrm{d}x
\\&= \frac{1}{3}\lim_{n\to \infty}\left\|u^+_{\lambda_n}\right\|^2
+\frac{1}{3}\lim_{n\to \infty}\left\|u^-_{\lambda_n}\right\|^2
\\& \geq\frac{2}{3}S^{\frac{3}{2}},
\end{align*}
 which leads to a contradiction with
 $m_0<\frac{2}{3}S^{\frac{3}{2}}$ (obtained by Remark \ref{rem-16}).

If Case $ii$ happens, which leads to
\begin{align}\label{u0-g}
0<\Lambda_2^2
\leq   \lim_{n\to \infty}\left\|u^-_{\lambda_n}\right\|^2
= \lim_{n\to
\infty}\int_{\mathbb{R}^{3}}\left|u^-_{\lambda_n}\right|^6\mathrm{d}x
\leq S^{-3}\lim_{n\to \infty}\left\|u^-_{\lambda_n}\right\|^6,
\end{align}
and
\begin{equation}\label{u0+g}
0<\Lambda_2^2
\leq   \lim_{n\to \infty}\left\|u^+_{\lambda_n}\right\|^2
= \lim_{n\to
\infty}\int_{\mathbb{R}^{3}}\left|u^+_{\lambda_n}\right|^6\mathrm{d}x
+\int_{\mathbb{R}^{3}}\left|u^+_{0}\right|^q\mathrm{d}x.
\end{equation}
Then \eqref{u0-g} shows $\lim_{n\to
\infty}\|u^-_{\lambda_n}\|^2\geq S^{\frac{3}{2}}$. Recall that
$u_0$ is a weak solution of problem \eqref{1.7}, i.e.,
\begin{equation}\label{9.19}
\|u^+_{0}\|^2
= \int_{\mathbb{R}^{3}}|u^+_{0}|^6\mathrm{d}x
+\int_{\mathbb{R}^{3}}|u^+_{0}|^q\mathrm{d}x,
\end{equation}
which together with Br\'{e}zis-Lieb lemma, we obtain that
\eqref{u0+g} is equivalent to
\begin{equation}\label{u0+gb}
\lim_{n\to \infty}\left\|u^+_{\lambda_n}-u^+_{0}\right\|^2
= \lim_{n\to
\infty}\int_{\mathbb{R}^{3}}\left|u^+_{\lambda_n}-u^+_{0}\right|^6\mathrm{d}x
\leq S^{-3}\lim_{n\to \infty}\left\|u^+_{\lambda_n}-u^+_{0}\right\|^6.
\end{equation}
Now, if $\lim_{n\to \infty}
\left\|u^+_{\lambda_n}-u^+_{0}\right\|^2=0$,
i.e., $u^+_{\lambda_n}\to u^+_{0}$ in $H_r^1(\mathbb{R}^3)$, by
\eqref{u0-g}, there holds
\begin{align*}
m_0=\lim_{n\to \infty}m_{\lambda_n}
=\lim_{n\to \infty}\mathcal{I}_{\lambda_n}(u_{\lambda_n})
&=\frac{1}{2}\lim_{n\to \infty}\left\|u^+_{\lambda_n}\right\|^2
-\frac{1}{6}\lim_{n\to \infty}
\int_{\mathbb{R}^{3}}\left|u^+_{\lambda_n}\right|^6\mathrm{d}x
-\frac{1}{q}
\int_{\mathbb{R}^{3}}\left|u^+_{0}\right|^q\mathrm{d}x
\\&\quad+\frac{1}{2}\lim_{n\to \infty}\left\|u^-_{\lambda_n}\right\|^2
-\frac{1}{6}\lim_{n\to \infty}\int_{\mathbb{R}^{3}}\left|u^-_{\lambda_n}\right|^6\mathrm{d}x
\\&=
\mathcal{I}_0(u_0^+)
+\frac{1}{2}\lim_{n\to \infty}\left\|u^-_{\lambda_n}\right\|^2
-\frac{1}{6}\lim_{n\to
\infty}\int_{\mathbb{R}^{3}}\left|u^-_{\lambda_n}\right|^6\mathrm{d}x
\\& = \mathcal{I}_0(u_0^+)+\frac{1}{3}\lim_{n\to
\infty}\left\|u^-_{\lambda_n}\right\|^2
\\ & \geq c_0+\frac{1}{3}S^{\frac{3}{2}},
\end{align*}
 which leads to a contradiction with
 $m_0<c_0+\frac{1}{3}S^{\frac{3}{2}}$. Thus, $\lim_{n\to
 \infty}\left\|u^+_{\lambda_n}-u^+_{0}\right\|^2>0$, and \eqref{u0+gb}
 indicates that $\lim_{n\to
 \infty}\left\|u^+_{\lambda_n}-u^+_{0}\right\|^2\geq S^{\frac{3}{2}}$.
 By Br\'{e}zis-Lieb lemma, \eqref{u0-g}, \eqref{9.19} and \eqref{u0+gb}, then
 \begin{align*}
m_0&=\lim_{n\to \infty}m_{\lambda_n}
=\lim_{n\to \infty}\mathcal{I}_{\lambda_n}(u_{\lambda_n})
\\&= \frac{1}{2}\lim_{n\to \infty}\left\|u^+_{\lambda_n}\right\|^2
-\frac{1}{6}\lim_{n\to \infty}
\int_{\mathbb{R}^{3}}\left|u^+_{\lambda_n}\right|^6\mathrm{d}x
-\frac{1}{q}
\int_{\mathbb{R}^{3}}\left|u^+_{0}\right|^q\mathrm{d}x
\\&\quad+\frac{1}{2}\lim_{n\to \infty}\left\|u^-_{\lambda_n}\right\|^2
-\frac{1}{6}\lim_{n\to \infty}
\int_{\mathbb{R}^{3}}\left|u^-_{\lambda_n}\right|^6\mathrm{d}x
\\&= \frac{1}{2}\lim_{n\to \infty}\left\|u^+_{\lambda_n}\right\|^2
-\frac{1}{6}\lim_{n\to \infty}
\int_{\mathbb{R}^{3}}\left|u^+_{\lambda_n}\right|^6\mathrm{d}x
-\frac{1}{q}
\left(\|u^+_{0}\|^2
-\int_{\mathbb{R}^{3}}|u^+_{0}|^6\mathrm{d}x\right)
 +\frac{1}{3}\lim_{n\to \infty}\left\|u^-_{\lambda_n}\right\|^2
\\&\ge \frac{1}{2}\lim_{n\to \infty}\left\|u^+_{\lambda_n}\right\|^2
-\frac{1}{6}\lim_{n\to \infty}
\int_{\mathbb{R}^{3}}\left|u^+_{\lambda_n}\right|^6\mathrm{d}x
-\frac{1}{2}
\|u^+_{0}\|^2
+\frac{1}{6}\int_{\mathbb{R}^{3}}|u^+_{0}|^6\mathrm{d}x
+\frac{1}{3}\lim_{n\to \infty}\left\|u^-_{\lambda_n}\right\|^2
\\&= \frac{1}{2}\lim_{n\to \infty}\left\|u^+_{\lambda_n}-u^+_{0}\right\|^2
-\frac{1}{6}\lim_{n\to \infty}
\int_{\mathbb{R}^{3}}\left|u^+_{\lambda_n}-u^+_{0}\right|^6\mathrm{d}x
+\frac{1}{3}\lim_{n\to \infty}\left\|u^-_{\lambda_n}\right\|^2
\\& = \frac{1}{3}\lim_{n\to
\infty}
\left\|u^+_{\lambda_n}-u^+_{0}\right\|^2+\frac{1}{3}\lim_{n\to
\infty}
\left\|u^-_{\lambda_n}\right\|^2
\\ & \geq \frac{2}{3}S^{\frac{3}{2}},
\end{align*}
which also leads to a contradiction with
$m_0<\frac{2}{3}S^{\frac{3}{2}}$.

Similarly, we can deduce a contradiction if Case $iii$ happens.


\textbf{Claim 3}: Problem \eqref{1.7} possesses a least energy
radial sign-changing solution $z_0$. Moreover, there exists a
unique pair $(s_{\lambda_n},t_{\lambda_n})\in(0,\infty)\times
(0,\infty)$ such that $s_{\lambda_n}z_0^++t_{\lambda_n}z_0^-\in
\mathcal{M}_{\lambda_n}$ and $(s_{\lambda_n},t_{\lambda_n})\to
(1,1)$ as $n\to \infty$.

Recall from Theorem \ref{thm-5} that problem \eqref{1.7} has a
least energy radial sign-changing solution $z_0$, that is,
$\mathcal{I}_0(z_0)=m_0$ and $\mathcal{I}_0'(z_0)=0$. Using Lemma
\ref{lam-16}, we get that there exists a unique pair
$(s_{\lambda_n},t_{\lambda_n})\in(0,\infty)\times (0,\infty)$
such that $s_{\lambda_n}z_0^++t_{\lambda_n}z_0^-\in
\mathcal{M}_{\lambda_n}$. Next, we aim to show that
$(s_{\lambda_n},t_{\lambda_n})\to (1,1)$ as $n\to \infty$. Since $s_{\lambda_n}z_0^++t_{\lambda_n}z_0^-\in
\mathcal{M}_{\lambda_n}$, then
\begin{align*}
&{s_{\lambda_n}^2}\|z_0^+\|^2
+{\lambda_n}{s_{\lambda_n}^{10}}
\int_{\mathbb{R}^{3}}\phi_{z_0^+}|z_0^+|^5\mathrm{d}x
+\lambda_ns_{\lambda_n}^5t_{\lambda_n}^5
\int_{\mathbb{R}^{3}}\phi_{z_0^-}|z_0^+|^5\mathrm{d}x
\\&\quad\quad\quad \quad
=s_{\lambda_n}^6\int_{\mathbb{R}^{3}}|z_0^+|^6\mathrm{d}x
+s_{\lambda_n}^q\int_{\mathbb{R}^{3}}|z_0^+|^q\mathrm{d}x,\\
&{t_{\lambda_n}^2}\|z_0^-\|^2
+{\lambda_n}{t_{\lambda_n}^{10}}
\int_{\mathbb{R}^{3}}\phi_{z_0^-}|z_0^-|^5\mathrm{d}x
+\lambda_n s_{\lambda_n}^5t_{\lambda_n}^5
\int_{\mathbb{R}^{3}}\phi_{z_0^+}|z_0^-|^5\mathrm{d}x
\\&\quad\quad\quad\quad
=t_{\lambda_n}^6\int_{\mathbb{R}^{3}}|z_0^-|^6\mathrm{d}x
+t_{\lambda_n}^q\int_{\mathbb{R}^{3}}|z_0^-|^q\mathrm{d}x.
\end{align*}
This together with $q\in(2,6)$, we conclude that
$\{s_{\lambda_n}\}$ and $\{t_{\lambda_n}\}$ are bounded in
$\mathbb{R}^+$. Then,
up to a subsequence such that $s_{\lambda_n}\rightarrow s_0$ and
$t_{\lambda_n}\rightarrow t_0$ as $n\rightarrow\infty$, there
hold
\begin{align}
{s_0^2}\|z_0^+\|^2
&= s_0^6\int_{\mathbb{R}^{3}}|z_0^+|^6\mathrm{d}x
+s_0^q\int_{\mathbb{R}^{3}}|z_0^+|^q\mathrm{d}x,\label{5.4}\\
{t_0^2}\|z_0^-\|^2
&= t_0^6\int_{\mathbb{R}^{3}}|z_0^-|^6\mathrm{d}x
+t_0^q\int_{\mathbb{R}^{3}}|z_0^-|^q\mathrm{d}x.\label{5.5}
\end{align}
Recall that $z_0$ is a least energy radial sign-changing solution of
problem \eqref{1.7}, that is,
\begin{align}\label{6.76}
\|z_0^\pm\|^2
=\int_{\mathbb{R}^{3}}|z_0^\pm|^6\mathrm{d}x
+\int_{\mathbb{R}^{3}}|z_0^\pm|^q\mathrm{d}x.
\end{align}
Then, we can derive from \eqref{5.4}$-$\eqref{6.76} that
\begin{align*}
\left(1-s_0^{2-q}\right)\|z_0^+\|^2
&=\left(1-s_0^{6-q}\right)\int_{\mathbb{R}^{3}}|z_0^+|^6\mathrm{d}x,\\
\left(1-t_0^{2-q}\right)\|z_0^-\|^2
&=\left(1-t_0^{6-q}\right)\int_{\mathbb{R}^{3}}|z_0^-|^6\mathrm{d}x.
\end{align*}
From $q\in(2,6)$, we can directly check that $(s_0,t_0)=(1,1)$,
which leads to the Claim $3$.

In view of this, it suffices to prove that $u_0$ obtained in
Claim $2$ is a least energy radial sign-changing solution of
problem \eqref{1.7}. Actually, by Claim $3$ and Lemma
\ref{lam-16}, it yields that
\begin{align*}
\mathcal{I}_0(z_0)
\le \mathcal{I}_0(u_0)
=\lim\limits_{n \rightarrow
\infty}\mathcal{I}_{\lambda_n}(u_{\lambda_n})
&\le \lim\limits_{n \rightarrow \infty}
\mathcal{I}_{\lambda_n}({s_{\lambda_n}z_0^++t_{\lambda_n}z_0^-})
=\mathcal{I}_0({z_0^++z_0^-})
=\mathcal{I}_0(z_0).
\end{align*}
This indicates that $u_0$ is a least energy radial sign-changing
solution of problem \eqref{1.7}. Similar arguments as proof of
Theorem \ref{thm-5}, we obtain that $u_0$ has precisely two nodal
domains. Hence, we complete the proof of Theorem \ref{thm-4}.
\end{proof}

\section*{Conflict of interest statements}
\noindent The authors declare that they have no conflict of
interest.

\section*{Data availability statements}
\noindent Data sharing not applicable to this article as no new
data were created or analyzed in this study.

\section*{Acknowledgements}
\noindent This paper was supported by the National Natural
Science Foundation of China (No. 11971393).


\begin{thebibliography}{99}


\bibitem{AA08} A. Ambrosetti, D. Ruiz,
Multiple bound states for the Schr\"{o}dinger-Poisson problem.
\emph{Commun. Contemp. Math.} \textbf{10} (2008), no. 3,
391--404.



\bibitem{AA12} A. Azzollini, P. d'Avenia,
On a system involving a critically growing nonlinearity.
\emph{J. Math. Anal. Appl.} \textbf{387} (2012), no. 1, 433--438.

\bibitem{AA13} A. Azzollini, P. d'Avenia, V. Luisi,
Generalized Schr\"{o}dinger-Poisson type systems.
\emph{Commun. Pure Appl. Anal.} \textbf{12} (2013), no. 2,
867--879.










\bibitem{BV98}
 V. Benci, D. Fortunato,
 An eigenvalue problem for the Schr\"{o}dinger-Maxwell equations.
\emph{Topol. Methods Nonlinear Anal.} \textbf{11} (1998), no. 2,
283--293.


\bibitem{BV02}
 V. Benci, D. Fortunato,
 Solitary waves of the nonlinear Klein-Gordon equation coupled
 with the Maxwell equations.
\emph{Rev. Math. Phys.} \textbf{14} (2002), no. 4, 409--420.


\bibitem{BH83} H. Br\'{e}zis, E. Lieb,
A relation between pointwise convergence of functions and
convergence of functionals.
\emph{Proc. Amer. Math. Soc.} \textbf{88} (1983), no. 3,
486--490.

\bibitem{BH34} H. Br\'{e}zis, L. Nirenberg,
Positive solutions of nonlinear elliptic equations involving
critical Sobolev exponents.
\emph{Comm. Pure Appl. Math.} \textbf{36} (1983), no. 4,
437--477.

\bibitem{KJB03} K.J. Brown, Y. Zhang,
The Nehari manifold for a semilinear elliptic equation with a
sign-changing weight function.
\emph{J. Differential Equations} \textbf{193} (2003), no. 2,
481--499.

\bibitem{CG36} G. Cerami, S. Solimini, M. Struwe,
Some existence results for superlinear elliptic boundary value
problems involving critical exponents.
\emph{J. Funct. Anal.} \textbf{69} (1986), no. 3, 289--306.


\bibitem{CG10} G. Cerami, G. Vaira,
Positive solutions for some non-autonomous
Schr\"{o}dinger-Poisson systems.
\emph{J. Differential Equations} \textbf{248} (2010), no. 3,
521--543.


\bibitem{CXP21}
X.-P. Chen, C.-L. Tang,
Least energy sign-changing solutions for Schr\"{o}dinger-Poisson
system with critical growth.
\emph{Commun. Pure Appl. Anal.} \textbf{20} (2021), no. 6,
2291--2312.



\bibitem{CXP22}
X.-P. Chen, C.-L. Tang,
Positive and sign-changing solutions for critical
Schr\"{o}dinger-Poisson systems with sign-changing potential.
\emph{Qual. Theory Dyn. Syst.} \textbf{21} (2022), no. 3, Paper
No. 89, 41 pp.



\bibitem{HXM21}
X. He,
Positive solutions for fractional Schr\"{o}dinger-Poisson systems
with doubly critical exponents.
\emph{Appl. Math. Lett.} \textbf{120} (2021), Paper No. 107190, 8
pp.






\bibitem{HH50} H. Hofer,
Variational and topological methods in partially ordered Hilbert
spaces.
\emph{Math. Ann.} \textbf{261} (1982), no. 4, 493--514.

\bibitem{LGB90}
G.B. Li,
Some properties of weak solutions of nonlinear scalar field
equations.
\emph{Ann. Acad. Sci. Fenn. Ser. A I Math.} \textbf{15} (1990),
no. 1, 27--36.

\bibitem{LFY14}
F. Li, Y. Li, J. Shi,
Existence of positive solutions to Schr\"{o}dinger-Poisson type
systems with critical exponent.
\emph{Commun. Contemp. Math.} \textbf{16} (2014), no. 6, 1450036,
28 pp.


\bibitem{LFY17}
Y. Li, F. Li, J. Shi,
Existence and multiplicity of positive solutions to
Schr\"{o}dinger-Poisson type systems with critical nonlocal term.
\emph{Calc. Var. Partial Differential Equations} \textbf{56}
(2017), no. 5, Paper No. 134, 17 pp.



\bibitem{LFY13} F. Li, Q. Zhang,
Existence of positive solutions to the Schr\"{o}dinger-Poisson
system without compactness conditions.
\emph{J. Math. Anal. Appl.} \textbf{401} (2013), no. 2, 754--762.











\bibitem{LE83}
E.H. Lieb,
Sharp constants in the Hardy-Littlewood-Sobolev inequality and
related inequalities.
\emph{Ann. of Math.} \textbf{118} (1983), 349--374.


\bibitem{LE01}
E.H. Lieb, M. Loss,
\emph{Analysis}. Second edition. Graduate Studies in Mathematics,
14. American Mathematical Society, Providence, RI, 2001.




\bibitem{LH16}
H. Liu,
Positive solutions of an asymptotically periodic
Schr\"{o}dinger-Poisson system with critical exponent.
\emph{Nonlinear Anal. Real World Appl.} \textbf{32} (2016),
198--212.


\bibitem{LJ17} J. Liu, J.-F. Liao, C.-L. Tang,
Ground state solution for a class of Schr\"{o}dinger equations
involving general critical growth term.
\emph{Nonlinearity} \textbf{30} (2017), no. 3, 899--911.







\bibitem{MC5} C. Miranda,
Un'osservazione su un teorema di Brouwer.
\emph{Boll. Un. Mat. Ital.} (2) \textbf{3} (1940), 5--7.



\bibitem{PHR51} P.H. Rabinowitz,
Variational methods for nonlinear eigenvalue problems.
Eigenvalues of non-linear problems (Centro Internaz. Mat. Estivo
(C.I.M.E.), III Ciclo, Varenna, 1974), pp. 139--195. Edizioni
Cremonese, Rome, 1974.






\bibitem{RD06} D. Ruiz,
The Schr\"{o}dinger-Poisson equation under the effect of a
nonlinear local term.
\emph{J. Funct. Anal.} \textbf{237} (2006), no. 2, 655--674.



\bibitem{SW15} W. Shuai, Q. Wang,
Existence and asymptotic behavior of sign-changing solutions for
the nonlinear Schr\"{o}dinger-Poisson system in $\mathbb{R}^3$.
\emph{Z. Angew. Math. Phys.} \textbf{66} (2015), no. 6,
3267--3282.



\bibitem{SJT12} J. Sun, H. Chen, J.J. Nieto,
On ground state solutions for some non-autonomous
Schr\"{o}dinger-Poisson systems.
\emph{J. Differential Equations} \textbf{252} (2012), no. 5,
3365--3380.




\bibitem{GT92}
G. Tarantello, Nodal solutions of semilinear elliptic equations
with critical exponent.
\emph{Differential Integral Equations} \textbf{5} (1992), no. 1,
25--42.


\bibitem{WC20}
C. Wang, J. Su,
Critical exponents of weighted Sobolev embeddings for radial
functions.
\emph{Appl. Math. Lett.}  \textbf{107}  (2020), 106484, 6 pp.


\bibitem{WDB19}
 D.-B. Wang, H.-B. Zhang, W. Guan,
 Existence of least-energy sign-changing solutions for
 Schr\"{o}dinger-Poisson system with critical growth.
 \emph{J. Math. Anal. Appl.} \textbf{479} (2019), no. 2,
 2284--2301.


\bibitem{WZP15}
Z. Wang, H.-S. Zhou,
Sign-changing solutions for the nonlinear Schr\"{o}dinger-Poisson
system in $\mathbb{R}^3$.
\emph{Calc. Var. Partial Differential Equations} \textbf{52}
(2015), no. 3--4, 927--943.








\bibitem{WM14} M. Willem,
\emph{Minimax Theorems}. Progress in Nonlinear Differential
Equations and their Applications, 24. Birkh\"{a}user Boston,
Inc., Boston, MA, 1996. 



















\bibitem{LFY20}
L.-F. Yin, X.-P. Wu, C.-L. Tang,
Existence and concentration of ground state solutions for
critical Schr\"{o}dinger-Poisson system with steep potential
well.
\emph{Appl. Math. Comput.} \textbf{374} (2020), 125035, 12 pp.



\bibitem{ZJ15} J. Zhang,
On ground state and nodal solutions of Schr\"{o}dinger-Poisson
equations with critical growth.
\emph{J. Math. Anal. Appl.} \textbf{428} (2015), no. 1, 387--404.


\bibitem{ZZH21}
Z. Zhang, Y. Wang, R. Yuan,
Ground state sign-changing solution for Schr\"{o}dinger-Poisson
system with critical growth.
\emph{Qual. Theory Dyn. Syst.} \textbf{20} (2021), no. 2, Paper
No. 48, 23 pp.




\bibitem{ZXJ18} X.-J. Zhong, C.-L. Tang,
Ground state sign-changing solutions for a
Schr\"{o}dinger-Poisson system with a critical nonlinearity in
$\mathbb{R}^3$.
\emph{Nonlinear Anal. Real World Appl.} \textbf{39} (2018)
166--184.



\end{thebibliography}
\end{document}